\renewcommand{\subsection}{\@startsection
{subsection}{2}{0mm}{\baselineskip}{-0.25cm}
{\normalfont\normalsize\em}}
\def\hH{\widehat{H}}
\def\hS{\widehat{S}}
\def\hGamma{\widehat{\Gamma}}
\def\hLambda{\widehat{\Lambda}}
\def\negP{\mathbf P}
\def\negQ{\mathbf Q}
\def\negalpha{\text{\boldmath$\alpha$}}
\def\neg1{\text{\boldmath$1$}}
\def\negbeta{\text{\boldmath$\beta$}}
\def\neglambda{\text{\boldmath$\lambda$}}
\def\neggamma{\text{\boldmath$\gamma$}}
\def\negeta{\text{\boldmath$\eta$}}
\def\negdelta{\text{\boldmath$\delta$}}
\def\neggamma{\text{\boldmath$\gamma$}}
\def\negeta{\text{\boldmath$\eta$}}
\def\neg1{\text{\boldmath$1$}}
\def\hH{\widehat{H}}
\def\cC{\mathcal C}
\def\cX{\mathcal X}
\def\NN{\mathbb{N}}
\def\ZZ{\mathbb{Z}}
\def\FF{\mathbb{F}}
\DeclareMathOperator{\lub}{lub}
\newtheorem{theorem}{Theorem}[section]
\newtheorem{proposition}[theorem]{Proposition}
\newtheorem{corollary}[theorem]{Corollary}
\newtheorem{lemma}[theorem]{Lemma}
\theoremstyle{definition}
\newtheorem{definition}[theorem]{Definition}
\theoremstyle{remark}
\newtheorem{remark}[theorem]{Remark}
\title[]{Generalized Weierstrass semigroups at several points on certain maximal curves which cannot be covered by the Hermitian curve}
\author{M. Montanucci and G. Tizziotti}
\date{\today}
\begin{document}

\begin{abstract}
In this paper we determine the generalized Weierstrass semigroup $ \widehat{H}(P_{\infty}, P_1, \ldots , P_{m})$, and consequently the Weierstrass semigroup $H(P_{\infty}, P_1, \ldots , P_{m})$, at $m+1$ points on the curves $\mathcal{X}_{a,b,n,s}$ and $\mathcal{Y}_{n,s}$. These curves has been introduced in Tafazolian et al. \cite{TTT} as new examples of curves which cannot be covered by the Hermitian curve.
\end{abstract}

\keywords{AG codes, maximal curves, Generalized Weierstrass semigroup}
\subjclass[2010]{14Q05, 94B05}

\maketitle

\section{Introduction}
Let $\mathcal{X}$ be a nonsingular, projective, geometrically irreducible algebraic curve of positive genus $g$ defined over a finite field $\mathbb{F}_q$ with $q$ elements and let $\mathcal{X}(\mathbb{F}_q)$ be the set of its $\mathbb{F}_q$-rational points. The curve $\mathcal{X}$ is called $\mathbb{F}_q$-\textit{maximal} if its number of $\mathbb{F}_{q}$-rational point attains the Hasse-Weil upper bound, namely equals $2g\sqrt{q}+q+1$. Clearly, maximal curves can only exist over  fields whose cardinality is a perfect square. Apart for being of theoretical interest as extremal objects, maximal curves over finite fields has attracted a lot of attention in recent decades due to their applications to coding theory and cryptography. 
Maximal curves are indeed special for the structure of the so-called Weiestrass semigroup at one point, which is the main ingredient used in the literature to construct AG codes with good parameters.
In \cite{D}, Delgado proposed an interesting generalization of Weierstrass semigroups, where instead of one single point several points of the curve are considered simultaneously. His original approach was on curves over algebraically closed fields, but later Beelen and Tutas \cite{BT} studied such object, called \textit{generalized Weierstrass semigroup}, for curves defined over finite fields. Moyano-Fern\' andez, Ten\'orio and Torres \cite{MTT} presented importante properties of generalized Weierstrass semigroup at several points on a curve, which suggested again the use of maximal curves to obtain good AG codes. In \cite{D} Delgado introduced also the notion of maximality to generalized Weierstrass semigroups  (see Definition \ref{def:maximals}). Interesting connections between the concepts of maximal elements and generating sets for Weiestrass semigroups was presented in \cite{MTT}, and in \cite{TT2}, the authors used the notion of maximality in generalized Weierstrass semigroup to give a characterization of gaps and pure gaps of Weierstrass semigroups.

The most important and well-studied example of a maximal curve is the so-called Hermitian curve $\mathcal{H}_q$ defined over $\mathbb{F}_{q^2}$ by the affine equation $y^q + y = x^{q+1}$. 
A well-known reason is that for fixed $q$, the curve $\mathcal{H}_q$ has the largest possible genus $g(\mathcal{H}_q ) = q(q- 1)/2$ that an $\mathbb{F}_{q^2}$-maximal curve can have. A result commonly attributed to Serre, see \cite[Proposition 6]{Serre}, gives that any $\mathbb{F}_{q^2}$-rational curve which is covered by an $\mathbb{F}_{q^2}$-maximal curve is itself also $\mathbb{F}_{q^2}$-maximal. Therefore many maximal curves can be obtained by constructing subcovers of already known maximal curves, in particular subcovers of the Hermitian curve. 
For a while it was speculated in the research community that perhaps all maximal curves could be obtained as subcovers of the Hermitian curve, but it was shown by Giulietti and Korchm\'aros that this is not the case, see \cite{GK}.
Giulietti and Korchm\'aros constructed indeed a maximal curve over $\mathbb{F}_{q^6}$, nowadays referred to as GK curve, which cannot be covered by the Hermitian curve whenever $q > 2$. Garcia, G\" uneri, and Stichtenoth, in \cite{GGS}, presented a new family of maximal curves over $\mathbb{F}_{q^{2n}}$ ($n$ odd), known as GGS curves, which generalizes the GK curve and that are not Galois-covered by the Hermitian curve \cite{GMZ, DM}. Many applications of these curves in coding theory have been made in recent years, see e.g. \cite{zini}, \cite{BMZ}, \cite{CT_GK}, \cite{GKcodes}, \cite{HY3} and \cite{CT mGK}. 
Another generalization of the GK curve over $\mathbb{F}_{q^{2n}}$ ($n$ odd) has been introduced by Beelen and Montanucci in \cite{BM}, which is now known as BM curves. These curves are not Galois-covered by the Hermitian curve as well. Applications of of the BM curves to coding theory can be found in \cite{LV} and \cite{MP}.

Tafazolian, Teher\'an-Herrera, and Torres \cite{TTT}  presented two further examples of maximal curves, denoted by $\mathcal{X}_{a,b,n,s}$ and $\mathcal{Y}_{n,s}$, that cannot be covered by the Hermitian curve. These examples are again closely related to the GK curve. They are not generalizations as the GGS and BM curves, but instead subcovers of the GK curve. The curves $\mathcal{X}_{a,b,n,s}$ and $\mathcal{Y}_{n,s}$ can be considered as concrete models of the quotient curves computed by Fanali and Giulietti in \cite[Theorem 4.5]{FG} while for $s=1$ the curve $\mathcal{Y}_{n,1}$ is the GGS curve mentioned above.
Recently, Br\'as-Amor\'os and Castellanos \cite{BS} determined the Weierstrass semigroup at certain $m+1$ points on the curves $\mathcal{X}_{a,b,n,1}$ and $\mathcal{Y}_{n,1}$ (case $s=1$) and present some conditions to find certain pure gaps for certain specific semigroups, yielding to some AG codes with good parameters. It is so natural to ask: what happens if $s>1$?
In this paper we determine the generalized Weierstrass semigroup $ \widehat{H}(P_{\infty}, P_1, \ldots , P_{m})$, and consequently the Weierstrass semigroup $H(P_{\infty}, P_1, \ldots , P_{m})$, at $m+1$ points on the curves $\mathcal{X}_{a,b,n,s}$ and $\mathcal{Y}_{n,s}$ for arbitrary $s \geq 1$, generalizing the results in \cite{BS}. 

The paper is organized as follows. In Section 2 we fix the notation that will be used throughout the paper and recall some basic facts about generalized Weierstrass semigroups and their maximal elements. In the same section the curves $\mathcal{X}_{a,b,n,s}$ and $\mathcal{Y}_{n,s}$, together with their most important properties, are also presented.  Section 3 is devoted to determine the generalized Weierstrass semigroup is at several points on the maximal curves $\mathcal{X}_{a,b,n,s}$, while in Section 4 we present similar results for the curve $\mathcal{Y}_{n,s}$.

\section{Preliminaries}

Let $\mathbb{F}_q$ denote a finite field with $q$ elements. Let $\mathcal{X}$ be a nonsingular, projective, geometrically irreducible algebraic curve of positive genus $g$ defined over $\mathbb{F}_q$ and let $\mathbf{Q}=(Q_1,\ldots,Q_m)$ an $m$-tuple of pairwise distinct rational points on $\mathcal{X}$. Throughout the paper the following notation will be used.

$\bullet$ $\mathbb{F}_q(\mathcal{X})$: function field associated to $\mathcal{X}$.

$\bullet$ $\mbox{Div}(\mathcal{X})$: the set of divisors on $\mathcal{X}$.

$\bullet$ $(f)$, $(f)_{\infty}$ and $(f)_{0}$: principal divisor, divisor of poles and divisor of zeros of $f \in \mathbb{F}_{q}(\mathcal{X})^\times$, respectively.

$\bullet$ $\mathcal{L}(G)$: the Riemann-Roch space associated to $G \in \mbox{Div}(\mathcal{X})$, that is, the $\mathbb{F}_{q}$-vector space $\{ h \in \mathbb{F}_{q}(\mathcal{X})^\times \mbox{ : } (h) + G \geq 0 \} \cup \{ 0 \}$.

$\bullet$ $\ell(G)$: the dimension of $\mathcal{L}(G)$.

$\bullet$ $R_{\mathbf{Q}}$: the ring of functions in $\mathbb{F}_{q}(\mathcal{X})^\times$ having poles only on the set $\{Q_1,\ldots,Q_m\}$, that is, that are regular outside $\{Q_1,\ldots,Q_m\}$.

$\bullet$ $\mathbb{N}_{0}$: the set of nonnegative integers.


$\bullet$ For a fixed $\negalpha = (\alpha_1, \ldots, \alpha_m) \in \mathbb{Z}^m$, $D_{\negalpha}$ denotes the divisor $\alpha_1Q_1 + \ldots \alpha_m Q_m$.

$\bullet$ $v_{P}$: the discrete valuation in the function field $\mathbb{F}_q(\mathcal{X})$ at the point $P \in \mathcal{X}$.

\subsection{The curves $\mathcal{X}_{a,b,n,s}$} \label{subsection X}

Let $q=p^a$ a power of a prime number $p$ and let $n,b,s\geq 1$ be integers satisfying the following:

$\bullet$ $n$ is odd;

$\bullet$ $b$ is a divisor of $a$;

$\bullet$ $s$ is a divisor of $\dfrac{q^n + 1}{q+1}$.

For $c \in \mathbb{F}_{q^2}$ with $c^{q-1}=-1$, we define the curve $\mathcal{X}_{a,b,n,s}$ over $\FF_{q^{2n}}$ by the affine equations

\begin{equation} \label{Xabns}
\displaystyle c y^{q+1} =  t(x):= \sum_{i=0}^{a/b - 1} x^{p^{ib}} \mbox{ and }
y^{q^2}-y  = z^{M},
\end{equation}
where $M = \dfrac{q^n + 1}{s (q+1)}$.
The curve $\mathcal{X}_{a,b,n,s}$ is $\FF_{q^{2n}}$-maximal and it has genus $$g(\mathcal{X}_{a,b,n,s})=\dfrac{q^{n+2}-p^b q^n - sq^3 + q^2+(s-1)p^b}{2sp^b}.$$ Furthermore, $\mathcal{X}_{a,b,n,s}$ is a subcover of the GGS curve, see \cite{TTT}. 

We will denote an affine point $P=(\alpha, \beta,\gamma) \in \mathcal{X}_{a,b,n,s}(\mathbb{F}_{q^{2n}})$ by $P_{(\alpha, \beta , \gamma)}$. The unique common pole of the functions $x,y$ and $z$ will be denote by $P_{\infty}$. Thus, we have the following divisors:
\begin{equation} \label{div x}
(x - \alpha) = (q+1)MP_{(\alpha,0,0)} - (q+1)MP_{\infty} \mbox{ };    
\end{equation}

\begin{equation} \label{div y}
\displaystyle (y - \beta) = \sum_{i=1}^{q/p^b} MP_{(\alpha_i,\beta,0)} - \dfrac{q}{p^b}MP_{\infty}, \mbox{ with } t(\alpha_{i}) = \beta^{q+1} \mbox{ and } \beta \in \mathbb{F}_{q^2} \mbox{ } ;    
\end{equation}

\begin{equation} \label{div z}
\displaystyle (z) = \sum_{j=1}^{q^2}\sum_{i=1}^{q/p^b} P_{(\alpha_i,\beta_j,0)} - \dfrac{q^3}{p^b} P_{\infty}, \mbox{ with } \beta_{j} \in \mathbb{F}_{q^2} \mbox{ and } c \beta_{j}^{q+1} = t(\alpha_i), \mbox{ for all } i,j.   
\end{equation}

From \cite[Proposition 5.1]{TTT} we have that $H(P_{\infty}) = \langle \frac{q}{p^b}M, \frac{q^3}{p^b}, (q+1)M \rangle$, which is a telescopic semigroup. More details about the curve $\mathcal{X}_{a,b,n,s}$ can be found in \cite{TTT}.

 \begin{remark} \label{canonical}
Since telescopic semigroups are symmetric (see \cite[Lemma 6.5]{kirfel}), the Frobenius number of $H(P_{\infty})$ is $\ell_g=2g-1=\dfrac{q^{n+2}-p^b q^n - sq^3 + q^2+(s-1)p^b}{sp^b} -1 = \dfrac{1}{p^b} [(q^2-p^b)(q+1)M - q^3]$.   
 \end{remark}

   \begin{proposition} \cite[Proposition 1.6.2]{stichtenoth} \label{canonical 2}
A divisor $K$ is canonical if and only if $\deg (K) = 2g-2$ and $\ell(K) \geq g$.
 \end{proposition}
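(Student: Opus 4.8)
The plan is to deduce both implications directly from the Riemann--Roch theorem, which for any divisor $G$ and any fixed canonical divisor $W$ reads $\ell(G) = \deg(G) + 1 - g + \ell(W - G)$. Everything will reduce to two applications of this identity together with the basic facts that $\mathcal{L}(0) = \mathbb{F}_q$, so $\ell(0) = 1$, and that principal divisors have degree zero.

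First I would settle the forward implication. Assuming $K$ is canonical, I apply Riemann--Roch with $G = 0$: the identity becomes $1 = 0 + 1 - g + \ell(W)$, so $\ell(W) = g$. Applying it next with $G = W$ yields $g = \deg(W) + 1 - g + \ell(0) = \deg(W) + 2 - g$, whence $\deg(W) = 2g - 2$. Since $K \sim W$, these numerical invariants are shared, so $\deg(K) = 2g-2$ and $\ell(K) = g$, and in particular $\ell(K) \geq g$, as required.

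For the converse I would assume $\deg(K) = 2g-2$ and $\ell(K) \geq g$, fix a canonical divisor $W$, and apply Riemann--Roch to $K$ itself: $\ell(K) - \ell(W - K) = \deg(K) + 1 - g = g - 1$. The hypothesis $\ell(K) \geq g$ then forces $\ell(W - K) \geq 1$, so there is a nonzero $f \in \mathcal{L}(W - K)$, i.e. $(f) + W - K \geq 0$. This effective divisor has degree $\deg((f)) + \deg(W) - \deg(K) = 0 + (2g-2) - (2g-2) = 0$, using the degree of $W$ computed in the forward part. An effective divisor of degree zero must be the zero divisor, so $(f) = K - W$, giving $K \sim W$; hence $K$ is canonical.

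Once Riemann--Roch is available the argument is essentially bookkeeping, so I do not expect a genuine obstacle. The only step I anticipate needing care is the converse, where the idea is to combine the bound $\ell(W-K) \geq 1$ with the degree computation $\deg(W-K) = 0$: this is what forces the single function supplied by the Riemann--Roch inequality to realize the linear equivalence $K \sim W$ exactly, rather than merely bounding the dimension.
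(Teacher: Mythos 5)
Your proof is correct. The paper does not prove this proposition at all --- it is quoted verbatim from Stichtenoth (Proposition 1.6.2) --- and your argument (Riemann--Roch at $G=0$ and $G=W$ to get $\ell(W)=g$, $\deg W=2g-2$, then in the converse forcing $\ell(W-K)\geq 1$ and using that an effective divisor of degree zero is the zero divisor) is precisely the standard proof found in that reference, with no gaps.
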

 
 \begin{lemma} \label{K canonico}
The divisor 
$$K=\left( \dfrac{q^{n+2}-p^b q^n - sq^3 + q^2+(s-1)p^b}{sp^b} -2 \right)P_{\infty} =  \left(\dfrac{1}{p^b} [(q^2-p^b)(q+1)M - q^3] - 1 \right)P_{\infty}.
$$ 
is a canonical divisor of $\mathcal{X}_{a,b,n,s}$.
\end{lemma}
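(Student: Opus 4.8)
The plan is to verify the two conditions characterizing canonical divisors in Proposition \ref{canonical 2}: that $\deg(K)=2g-2$ and that $\ell(K)\geq g$. Both are checked against the one-point Weierstrass semigroup $H(P_\infty)=\langle \tfrac{q}{p^b}M,\ \tfrac{q^3}{p^b},\ (q+1)M\rangle$, whose telescopic (hence symmetric) structure is the crucial input.

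First I would dispose of the degree condition, which is essentially bookkeeping. From Remark \ref{canonical} the Frobenius number of $H(P_\infty)$ is $\ell_g=2g-1=\tfrac{q^{n+2}-p^bq^n-sq^3+q^2+(s-1)p^b}{sp^b}-1$. Adding $1$ and subtracting $2$ shows that $2g-2=\tfrac{q^{n+2}-p^bq^n-sq^3+q^2+(s-1)p^b}{sp^b}-2$, which is exactly the coefficient of $P_\infty$ in the stated divisor $K$. Since $P_\infty$ is a single $\FF_{q^{2n}}$-rational point (degree one), this gives $\deg(K)=2g-2$ immediately.

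The heart of the argument is the estimate $\ell(K)\geq g$. Because $K$ is supported only at $P_\infty$, I would use the standard identity $\ell(mP_\infty)=\#\{\rho\in H(P_\infty):0\leq\rho\leq m\}$, which follows from the fact that $\ell(mP_\infty)-\ell((m-1)P_\infty)$ equals $1$ precisely when $m$ is a pole number and $0$ otherwise, together with $\ell(0\cdot P_\infty)=1$. Hence $\ell(K)=\#\bigl(H(P_\infty)\cap[0,2g-2]\bigr)$. To count these non-gaps exactly I would invoke the symmetry of $H(P_\infty)$ (telescopic semigroups are symmetric, as recorded before Remark \ref{canonical}): for each integer $x$, exactly one of $x$ and $\ell_g-x=(2g-1)-x$ lies in $H(P_\infty)$. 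Running $x$ over $\{0,1,\ldots,2g-1\}$, this set of $2g$ integers splits into exactly $g$ gaps and $g$ non-gaps. Since the Frobenius number $2g-1$ is itself a gap, all $g$ non-gaps already lie in the subinterval $[0,2g-2]$, whence $\ell(K)=g$ and in particular $\ell(K)\geq g$.

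Combining the two verifications, Proposition \ref{canonical 2} yields that $K$ is a canonical divisor. I expect no serious obstacle here: the computation is routine once the one-point structure reduces everything to counting non-gaps. The only point demanding care is the use of symmetry to obtain the count \emph{exactly} $g$, and in particular the observation that the Frobenius number $2g-1$ lies just outside the interval $[0,2g-2]$, so that the tally comes out to $g$ rather than $g-1$.
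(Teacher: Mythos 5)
Your proposal is correct and follows essentially the same route as the paper: both verify $\deg(K)=2g-2$ and $\ell(K)=g$ via Remark \ref{canonical} (the Frobenius number of the telescopic, hence symmetric, semigroup $H(P_\infty)$ being $2g-1$) and then invoke Proposition \ref{canonical 2}. The only difference is that you spell out the non-gap counting argument (the symmetry pairing of $x$ with $(2g-1)-x$ and the observation that $2g-1$ is itself a gap) that the paper leaves implicit in the phrase ``and thus $\ell(K)=g$.''
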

\begin{proof}
First, note that $\deg(K)=2g-2$. Now, by Remark \ref{canonical} the Frobenius number of $H(P_{\infty})$ is $ 2g-1$ and thus $\ell(K)=g$. Then the result follows from Proposition \ref{canonical 2}.
\end{proof}

\subsection{The curves $\mathcal{Y}_{n,s}$}
Let $q$ be a prime power. Let $n \geq 3$ be an odd integer and $s$ be a divisor of $\frac{q^n+1}{q+1}$. The $\mathcal{Y}_{n,s}$ curve defined over $\FF_{q^{2n}}$ is the projective curve defined by the equations

\begin{equation} \label{Yns}
\left\{ \begin{array}{rcl}
x^q+x & = & y^{q+1} \\
y^{q^2}-y & = & z^{M}
\end{array}\right.,
\end{equation}
where $M = \dfrac{q^n + 1}{s (q+1)}$.
The curve $\mathcal{Y}_{n,s}$ is $\FF_{q^{2n}}$-maximal and it has genus $$g(\mathcal{Y}_{n,s})=\dfrac{q^{n+2}-q^n - sq^3 + q^2+s-1}{2s}.$$ We note that, for $s=1$, the curve $\mathcal{Y}_{n,1}$ is the GGS curve. So, we can say that this curve generalizes the GGS curve. 

Let $\mathcal{Y}_{n,s}(\mathbb{F}_{q^{2n}})$ be the set of $\mathbb{F}_{q^{2n}}$-rational points of $\mathcal{Y}_{n,s}$, and we will denote $P=(\alpha, \beta,\gamma) \in \mathcal{Y}_{n,s}(\mathbb{F}_{q^{2n}})$ by $P_{(\alpha, \beta , \gamma)}$. The unique common pole of the functions $x,y$ and $z$ will be denote by $P_{\infty}$. Thus, we have the following divisors:
\begin{equation} \label{div x Yns}
(x - \alpha) = (q+1)MP_{(\alpha,0,0)} - (q+1)MP_{\infty} \mbox{ ;}    
\end{equation}

\begin{equation} \label{div y Yns}
\displaystyle (y - \beta) = \sum_{i=1}^{q} MP_{(\alpha_i,\beta,0)} - qMP_{\infty}, \mbox{ with } \alpha_{i}^{q} + \alpha_i = \beta^{q+1} \mbox{ and } \beta \in \mathbb{F}_{q^2}\mbox{ ;}  
\end{equation}

\begin{equation} \label{div z Yns}
\displaystyle (z) = \sum_{j=1}^{q^2}\sum_{i=1}^{q} P_{(\alpha_i,\beta_j,0)} - q^3 P_{\infty}, \mbox{ with } \beta_{j} \in \mathbb{F}_{q^2} \mbox{ and } \beta_{j}^{q+1} = \alpha_{i}^{q} + \alpha_i \mbox{ for all } i=1,\ldots,q.    
\end{equation}


From \cite[Proposition 5.1]{TTT} we have that $H(P_{\infty}) = \langle qM, q^3, (q+1)M \rangle$ and it is a telescopic semigroup. More details about the curve $\mathcal{Y}_{n,s}$ can be found in \cite{TTT}.

Analogously to Lemma \ref{K canonico} one can prove that the divisor

\begin{equation} \label{K canonico Yns}
    K  = \left( \dfrac{q^{n+2}-q^n - sq^3 + q^2+s-1}{s} -2 \right)P_{\infty} = [ (q^2-1)(q+1)M - q^3 - 1]P_{\infty}
\end{equation}

is a canonical divisor of $\mathcal{Y}_{n,s}$. Since the proof would be very similar to that of  Lemma \ref{K canonico}, we will omit it.

\subsection{Weierstrass semigroups and gaps}\label{WS} As before, let $\mathcal{X}$ be a curve over $\mathbb{F}_q$ and let $\mathbf{Q} = (Q_{1}, \ldots , Q_{m})$ be an $m$-tuple of pairwise distinct rational points on $\mathcal{X}$. The Weierstrass semigroup $H(\mathbf{Q})$ of $\cX$ at $\mathbf{Q}$ is the set

$$H(\mathbf{Q}) := \left\{(a_{1}, \ldots, a_{m}) \in \mathbb{N}_{0}^ {m} \mbox{ : } \exists \ h \in \mathbb{F}_q(\mathcal{X})^\times \mbox{ with } (h)_{\infty} = \sum_{i=1}^ {m} a_{i}Q_{i} \right\}.$$

In \cite{gretchen1}, G. Matthews introduced the concept of \textit{minimal generating set} for $H(\mathbf{Q})$, denoted by $\Gamma(\mathbf{Q}) = \Gamma(Q_{1}, \ldots , Q_{m})$. The word ``generating" comes from the fact that $H(\mathbf{Q})$ can be reconstructed from $\Gamma(\mathbf{Q})$, as we describe in the following.

Let ${\bf u}_1,\ldots,{\bf u}_t\in \mathbb{N}_0^{m}$, where ${\bf u}_k = (u_{k_{1}}, \ldots , u_{k_{m}})$ for all $k=1,\ldots t$. The \emph{least upper bound} ($\lub$) of the vectors ${\bf u}_1,\ldots,{\bf u}_t$ is defined as $$\lub\{{\bf u}_1,\ldots,{\bf u}_t\}=(\max\{{ u_{1_1}},\ldots,{ u_{t_1}}\},\ldots, \max\{{ u_{1_m}},\ldots,{ u_{t_m}}\} )\in \mathbb{N}_{0}^{m}.$$

In \cite[Theorem 7]{gretchen1}, it is shown that, if $2\leq m \leq q$, then 

$$
H(\mathbf{Q})= \{ \lub\{{\bf u}_1,\ldots,{\bf u}_m\}\in \mathbb{N}_0^m \mbox{ ; }  {\bf u}_1,\ldots,{\bf u}_m \in \Gamma(\mathbf{Q}) \}.
$$

The elements in the finite complement ${G(\mathbf{Q}):=\NN^m_0\backslash H(\mathbf{Q})}$, are called \emph{Weierstrass gaps}, or simply \textit{gaps}, of $\cX$ at $\mathbf{Q}$ (or simply \emph{gaps}. A \emph{pure Weierstrass gap} (or simply \textit{pure gap}) is an $m$-tuple $\negalpha \in G(\mathbf{Q})$ such that $\ell(D_\negalpha) = \ell(D_\negalpha - Q_j)$ for  $j=1,\ldots,m$. The set of pure gaps of $\cX$ at $\mathbf{Q}$ will be denoted by $G_0(\mathbf{Q})$. As mentioned in the Introduction, Weierstrass semigroups and their gaps are an important tool in coding theory, see e.g. \cite{CT}, \cite{GarciaKimLax} and \cite{gretchen3}. For more details about Weierstrass semigroups at $m$ points see \cite{CK} and \cite{CT}.

\subsection{Generalized Weierstrass semigroups and maximal elements}\label{GWS} Given $\mathcal{X}$ and $\mathbf{Q} = (Q_{1}, \ldots , Q_{m})$ as before, we define the \emph{generalized Weierstrass semigroup} of $\cX$ at $\mathbf{Q}$ to be the set
\begin{equation}\label{generalized WS}
 \widehat{H} (\mathbf{Q}):=\{(-v_{Q_1}(h),\ldots,-v_{Q_m}(h))\in \mathbb{Z}^m \ : \ h\in R_{\mathbf{Q}}\backslash\{0\} \}.
 \end{equation}
For further references and recent applications of these structures see \cite{BT,MTT,TT}. It is worth mentioning that the generalized Weierstrass semigroup $\widehat{H} (\mathbf{Q})$ is a different set than the (classical) Weierstrass semigroup $H(\mathbf{Q})$ of $\cX$ at $\mathbf{Q}$ presented in the previous section. However, the classical and generalized Weierstrass semigroups are related by $ H(\mathbf{Q}) = \widehat{H}(\mathbf{Q}) \cap \NN_0^m$ provided that $q \geq m$;  see \cite[Proposition 6]{BT}. 

For ${\negalpha=(\alpha_1,\ldots,\alpha_m)\in \mathbb{Z}^m}$ and $i\in \{1,\ldots,m\}$, we let
$$\nabla_i^m(\negalpha):=\{\negbeta=(\beta_1,\ldots,\beta_m)\in \hH(\negQ) \ : \ \beta_i=\alpha_i \ \text{and }\beta_j\leq \alpha_j \ \text{for }j\neq i\}.$$
Such sets play an important role in the characterization of $\hH(\mathbf{Q})$ as the following proposition shows.

\begin{proposition}\cite[Proposition 2.1]{TT} \label{prop 2.1.3 da tese}
Let $\negalpha \in \mathbb{Z}^m$ and assume that $q \geq m$. One has
\begin{enumerate}[\rm (1)]
\item $\negalpha \in \widehat{H} (\mathbf{Q})$ if and only if $\ell(D_{\negalpha}) = \ell(D_{\negalpha} - Q_i)+1$, for all $i \in \{1,\ldots,m\}$;
\item $\nabla_{i}^{m}(\negalpha) = \emptyset$ if and only if $\ell(D_{\negalpha})=\ell(D_{\negalpha} - Q_i)$.
\end{enumerate}
\end{proposition}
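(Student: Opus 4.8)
The plan is to prove both statements by relating vanishing of the sets $\nabla_i^m(\negalpha)$ and the set $\widehat{H}(\mathbf{Q})$ itself to the behaviour of the Riemann--Roch dimension $\ell(D_{\negalpha})$ under subtracting a single point $Q_i$. The key preliminary observation is that for any divisor $G$ and any rational point $P$ one always has $\ell(G) - 1 \le \ell(G-P) \le \ell(G)$, since $\mathcal{L}(G-P) \subseteq \mathcal{L}(G)$ is a subspace of codimension at most one (a function in $\mathcal{L}(G)$ lies in $\mathcal{L}(G-P)$ exactly when its evaluation at $P$, in a suitable local sense, vanishes, giving one linear condition). Thus $\ell(D_{\negalpha}) = \ell(D_{\negalpha}-Q_i)+1$ means precisely that subtracting $Q_i$ genuinely drops the dimension, i.e.\ there exists some $h \in \mathcal{L}(D_{\negalpha})$ with $v_{Q_i}(h) = -\alpha_i$ exactly.

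I would first prove part (2), since (1) is essentially the simultaneous version of it. To show $\nabla_i^m(\negalpha) = \emptyset \iff \ell(D_{\negalpha}) = \ell(D_{\negalpha}-Q_i)$, I would argue the contrapositive in both directions. Suppose $\ell(D_{\negalpha}) = \ell(D_{\negalpha}-Q_i)+1$, i.e.\ the dimension drops; then there is $h \in R_{\mathbf{Q}}$ with $(h)+D_{\negalpha} \ge 0$ but $v_{Q_i}(h) = -\alpha_i$ sharply. Setting $\negbeta = (-v_{Q_1}(h),\ldots,-v_{Q_m}(h))$, the condition $(h)+D_{\negalpha}\ge 0$ forces $-v_{Q_j}(h) \le \alpha_j$ for every $j$, while the sharpness gives $\beta_i = \alpha_i$; hence $\negbeta \in \nabla_i^m(\negalpha)$ and the set is nonempty. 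Conversely, if $\negbeta \in \nabla_i^m(\negalpha)$ is witnessed by some $h \in R_{\mathbf{Q}}$, then $h \in \mathcal{L}(D_{\negalpha})$ but $h \notin \mathcal{L}(D_{\negalpha}-Q_i)$ because $v_{Q_i}(h) = -\alpha_i = -\beta_i$, so the inclusion of spaces is strict and the dimension drops by exactly one. This establishes the equivalence in (2).

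For part (1), the statement $\negalpha \in \widehat{H}(\mathbf{Q})$ asks for a single function $h \in R_{\mathbf{Q}}$ with $v_{Q_i}(h) = -\alpha_i$ simultaneously for \emph{all} $i$. The forward implication is immediate: such an $h$ witnesses a dimension drop at each $Q_i$, giving $\ell(D_{\negalpha}) = \ell(D_{\negalpha}-Q_i)+1$ for all $i$. The substantive direction is the converse, and here is where I expect the main obstacle: from dimension drops happening separately at each $Q_i$, I must produce one function that is sharp at all the points at once. For each $i$ one gets a function $h_i$ sharp at $Q_i$ and bounded by $\alpha_j$ at the other points, but a generic linear combination need not stay sharp everywhere. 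The standard remedy exploits the hypothesis $q \ge m$: over a field with at least $m$ elements one can choose scalars $\lambda_1,\ldots,\lambda_m \in \mathbb{F}_q$ so that $h = \sum_i \lambda_i h_i$ avoids the finitely many ``cancellation'' conditions at each point, keeping $v_{Q_i}(h) = -\alpha_i$ for every $i$ simultaneously; this is precisely the point where the cardinality bound is indispensable.

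The cleanest route, rather than constructing $h$ by hand, is to invoke part (2) together with the definition of $\nabla_i^m$: the hypotheses in (1) say, via (2), that $\nabla_i^m(\negalpha) \neq \emptyset$ for every $i$, and one then shows that the common coordinate condition forces a single element $\negalpha$ itself to lie in $\widehat{H}(\mathbf{Q})$. The delicate step remains the passage from ``$\nabla_i^m(\negalpha)\neq\emptyset$ for all $i$'' to ``$\negalpha\in\widehat{H}(\mathbf{Q})$'', which again rests on the $q \ge m$ genericity argument for combining the witnessing functions. Since Proposition~\ref{prop 2.1.3 da tese} is quoted from \cite{TT}, I would present this combination argument as the heart of the proof and treat the elementary dimension inequalities as routine.
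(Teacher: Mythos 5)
The paper never proves this proposition: it is imported wholesale from \cite{TT} (cited there as Proposition 2.1), so there is no in-paper argument to compare yours against; I can only judge your proof on its own, and it is correct — indeed it is the standard argument for results of this type. Part (2) is complete as you wrote it, and it is worth noting that it uses no hypothesis on $q$ at all: the two key observations are that any $h\in\mathcal{L}(D_{\negalpha})$ automatically lies in $R_{\mathbf{Q}}$ (since $D_{\negalpha}$ is supported on $\{Q_1,\ldots,Q_m\}$), and that removing one point changes $\ell$ by at most one, so strict inclusion of the Riemann--Roch spaces forces a drop of exactly one. For part (1) you correctly isolate the only substantive step, the converse, and correctly place the hypothesis $q\geq m$ there. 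To make your ``genericity'' step airtight, phrase it as a subspace-covering statement rather than as a choice of scalars: under the hypothesis of (1), each $\mathcal{L}(D_{\negalpha}-Q_i)$ is a codimension-one $\mathbb{F}_q$-subspace of $\mathcal{L}(D_{\negalpha})$, and a vector space over $\mathbb{F}_q$ cannot be the union of $m\leq q$ proper subspaces: writing $\ell=\ell(D_{\negalpha})\geq 1$, the union of $m$ hyperplanes through $0$ has at most $m\left(q^{\ell-1}-1\right)+1\leq q^{\ell}-q+1<q^{\ell}$ elements. Any $h$ in $\mathcal{L}(D_{\negalpha})$ outside all of the $\mathcal{L}(D_{\negalpha}-Q_i)$ then satisfies $v_{Q_i}(h)=-\alpha_i$ for every $i$ simultaneously, giving $\negalpha\in\widehat{H}(\mathbf{Q})$; this replaces the informal ``avoid the cancellation conditions'' with a counting argument and shows exactly why $q\geq m$ is indispensable. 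Your alternative route via part (2) (from $\nabla_i^m(\negalpha)\neq\emptyset$ for all $i$) reduces to the same combination step, so nothing is gained or lost by it.
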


Let $\negalpha=(\alpha_1,\ldots,\alpha_m)\in \mathbb{Z}^m$. For a nonempty subset $J\subsetneq \{1,\ldots,m\}$, define
$$\nabla_J(\negalpha):=\{\negbeta=(\beta_1,\ldots,\beta_m)\in \hH(\negQ) \ : \ \beta_j=\alpha_j \ \text{for }j\in J, \ \text{and }\beta_i<\alpha_i  \ \text{for }i\not\in J\}.$$

\begin{definition}\label{def:maximals} An element $\negalpha\in \widehat{H} (\mathbf{Q})$ is called \emph{absolute maximal} if ${\nabla_J(\negalpha)=\emptyset}$ for every $J\subsetneq \{1,\ldots,m\}$ with $\#J\geq 2$. If otherwise ${\nabla_J(\negalpha)\neq\emptyset}$ for every $J\subsetneq  \{1,\ldots,m\}$ with $\#J\geq 2$, we say that $\negalpha\in \hH(\negQ)$ is \emph{relative maximal}.
\end{definition}

The sets of absolute and relative maximal elements in $\widehat{H} (\mathbf{Q})$ will be denoted, respectively, by $\hGamma(\mathbf{Q})$ and $\hLambda(\negQ)$.

\begin{proposition} \cite[Proposition 3.2]{TT} \label{absmax} Let $\negalpha\in \hH(\mathbf{Q})$ and assume that $q \geq m$. The following statements are equivalent:
\begin{enumerate}[\rm (i)]
\item $\negalpha\in \hGamma(\mathbf{Q})$;
\item $\nabla_i^m(\negalpha)=\{\negalpha\}$ for all $i\in \{1,\ldots,m\}$;
\item  $\nabla_i^m(\negalpha)=\{\negalpha\}$ for some $i\in \{1,\ldots, m\}$;
\item $\ell(D_{\negalpha})=\ell(D_{\negalpha} - \sum_{i=1}^m Q_i)+1$.
\end{enumerate}
\end{proposition}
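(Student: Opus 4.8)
\emph{The plan.} I would translate the four conditions into linear algebra inside the Riemann--Roch space $V:=\mathcal{L}(D_{\negalpha})$ and its subspaces $W_i:=\mathcal{L}(D_{\negalpha}-Q_i)$, $i=1,\ldots,m$. Since $\negalpha\in\hH(\negQ)$, Proposition~\ref{prop 2.1.3 da tese}(1) gives $\ell(D_{\negalpha})=\ell(D_{\negalpha}-Q_i)+1$ for every $i$, so each $W_i$ is a hyperplane of $V$. Moreover $\bigcap_{i=1}^m W_i=\mathcal{L}(D_{\negalpha}-\sum_{i=1}^m Q_i)=\mathcal{L}(D_{\negalpha-\neg1})$, so condition~(iv) says precisely that the intersection of these $m$ hyperplanes again has codimension $1$ in $V$.

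\emph{Key dictionary.} The heart of the reformulation is the claim that, for each $i$,
\[
\nabla_i^m(\negalpha)=\{\negalpha\}\quad\Longleftrightarrow\quad W_j\subseteq W_i\ \text{for all }j\in\{1,\ldots,m\}.
\]
To prove it I would note that any nonzero $f\in V$ lies in $R_{\negQ}$ and has pole vector $(-v_{Q_1}(f),\ldots,-v_{Q_m}(f))\in\hH(\negQ)$, whose $i$-th entry equals $\alpha_i$ exactly when $f\notin W_i$ and whose $j$-th entry is strictly below $\alpha_j$ exactly when $f\in W_j$. Thus the pole vectors of the functions in $V\setminus W_i$ are precisely the elements of $\nabla_i^m(\negalpha)$, and $\nabla_i^m(\negalpha)=\{\negalpha\}$ holds iff no $f\in V\setminus W_i$ lies in any $W_j$, i.e. iff $W_j\subseteq W_i$ for every $j$.

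\emph{The equivalences (ii)$\Leftrightarrow$(iii)$\Leftrightarrow$(iv).} With this dictionary these reduce to hyperplane geometry. Implication (ii)$\Rightarrow$(iii) is immediate. For (iii)$\Rightarrow$(iv): if $\nabla_{i_0}^m(\negalpha)=\{\negalpha\}$ then $W_j\subseteq W_{i_0}$ for all $j$, and since each $W_j$ and $W_{i_0}$ is a hyperplane this forces $W_j=W_{i_0}$; hence all the $W_j$ coincide, $\bigcap_j W_j=W_{i_0}$ has codimension $1$, which is (iv). For (iv)$\Rightarrow$(ii): if $\bigcap_j W_j$ has codimension $1$ then, being contained in each hyperplane $W_j$, it equals each of them, so all $W_j$ agree; then $W_j\subseteq W_i$ for every pair and the dictionary gives $\nabla_i^m(\negalpha)=\{\negalpha\}$ for all $i$.

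\emph{Tying in (i); the main obstacle.} The easy half is (ii)$\Rightarrow$(i): writing $\nabla_i^m(\negalpha)\setminus\{\negalpha\}=\bigcup_{i\in J\subsetneq\{1,\ldots,m\}}\nabla_J(\negalpha)$, condition (ii) forces $\nabla_J(\negalpha)=\emptyset$ for every nonempty proper $J$, in particular for those with $\#J\geq 2$, which is absolute maximality. The substantial direction is (i)$\Rightarrow$(ii), where I expect the real work. Assuming (i) but $\nabla_{i_0}^m(\negalpha)\neq\{\negalpha\}$ for some $i_0$, I would pick $\negbeta\neq\negalpha$ in $\nabla_{i_0}^m(\negalpha)$; then $J=\{j:\beta_j=\alpha_j\}$ is a proper set containing $i_0$ with $\negbeta\in\nabla_J(\negalpha)$, so absolute maximality forces $\#J=1$, i.e. $\nabla_{\{i_0\}}(\negalpha)\neq\emptyset$. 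Taking a function $g$ realizing such a $\negbeta$ (pole order exactly $\alpha_{i_0}$ at $Q_{i_0}$, strictly smaller at every other $Q_k$) and a function $h$ realizing $\negalpha$, I would cancel the leading term at $Q_{i_0}$: for a suitable constant $\lambda$ the function $h-\lambda g$ has pole order below $\alpha_{i_0}$ at $Q_{i_0}$ but still exactly $\alpha_k$ at each $Q_k$ with $k\neq i_0$ (there $g$ is dominated by $h$). Its pole vector lies in $\nabla_{\{1,\ldots,m\}\setminus\{i_0\}}(\negalpha)$, a set of index-size $m-1\geq 2$, contradicting (i). The delicate points I expect to dwell on are the legitimacy of the cancellation (choosing $\lambda$ in the constant field, using that the $Q_i$ are rational and that $q\geq m$, as required throughout via Proposition~\ref{prop 2.1.3 da tese}) and the bookkeeping ensuring the subtraction leaves the pole orders at the remaining points untouched.
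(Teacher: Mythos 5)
First, a point of order: the paper does not prove this proposition at all --- it is quoted verbatim from \cite[Proposition 3.2]{TT} --- so there is no internal proof to compare yours against, and I can only assess your proposal on its own merits. Your dictionary is correct: every nonzero $f\in V=\mathcal{L}(D_{\negalpha})$ lies in $R_{\negQ}$, the map $f\mapsto(-v_{Q_1}(f),\ldots,-v_{Q_m}(f))$ sends $V\setminus W_i$ onto $\nabla_i^m(\negalpha)$ (onto, because any $\negbeta\in\nabla_i^m(\negalpha)$ satisfies $\negbeta\leq\negalpha$, so a function realizing $\negbeta$ already lies in $V$), and Proposition \ref{prop 2.1.3 da tese}, which needs $q\geq m$, makes each $W_i=\mathcal{L}(D_{\negalpha}-Q_i)$ a hyperplane of $V$. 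From there your hyperplane geometry gives clean, correct proofs of (ii)$\Leftrightarrow$(iii)$\Leftrightarrow$(iv), and the decomposition $\nabla_i^m(\negalpha)\setminus\{\negalpha\}=\bigcup_{i\in J\subsetneq\{1,\ldots,m\}}\nabla_J(\negalpha)$ correctly yields (ii)$\Rightarrow$(i).

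The genuine gap is in (i)$\Rightarrow$(ii), at the very last step: your cancellation produces an element of $\nabla_{\{1,\ldots,m\}\setminus\{i_0\}}(\negalpha)$ and you conclude because this set has ``index-size $m-1\geq 2$''. That inequality fails for $m=2$, and the failure cannot be repaired under the definition you (reasonably) took at face value: when $m=2$ there are no proper subsets $J$ with $\#J\geq 2$, so Definition \ref{def:maximals} as literally written makes \emph{every} element of $\hH(\negQ)$ vacuously absolute maximal, while (ii) genuinely fails for some elements --- e.g.\ if $f$ has pole divisor $nQ_1$ and $g$ has pole divisor $n'Q_2$ (with $n,n'>0$ nongaps at the respective points), then $\negalpha=(n,n')$ is the pole vector of $fg$, yet $(n,0)\in\nabla_1^2(\negalpha)$, so $\nabla_1^2(\negalpha)\neq\{\negalpha\}$. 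Thus under the paper's literal definition the implication (i)$\Rightarrow$(ii) is false for $m=2$, and no argument can close your gap. The resolution is that in the source papers \cite{MTT,TT} absolute (and relative) maximality carries an extra requirement that Definition \ref{def:maximals} silently drops: the singleton sets $\nabla_{\{i\}}(\negalpha)$ must also be empty (Delgado's maximality) --- a hypothesis still visible in this paper in Proposition \ref{relmax}(2), where relative maximality entails $\nabla(\negalpha)=\emptyset$. With the full definition, (i)$\Rightarrow$(ii) follows immediately from your decomposition, and your cancellation argument (which is correct for $m\geq 3$, including the use of rationality of the $Q_i$ to choose $\lambda$ in $\Fq$) is then not needed for the proposition itself; it instead proves the separate fact that for $m\geq 3$ the weak ``$\#J\geq 2$'' condition already forces the singleton condition, i.e.\ that the two definitions agree except precisely in the problematic case $m=2$.
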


Given a finite subset $\mathcal{B}\subseteq \ZZ^m$,  we define the \emph{least upper bound} ($\lub$) of $\mathcal{B}$ by
$$\mbox{lub}(\mathcal{B}):=(\max\{\beta_1\mbox{ : } \negbeta \in \mathcal{B} \},\ldots,\max\{\beta_m\mbox{ : } \negbeta \in \mathcal{B} \})\in \ZZ^m.$$

The next theorem shows that the set $\hGamma(\mathbf{Q})$ determines $\hH(\mathbf{Q})$ in terms of least upper bounds.

\begin{theorem}\cite[Theorem 3.4]{TT} Assume that $q \geq m$. The generalized Weierstrass semigroup of $\cX$ at $\mathbf{Q}$ can be written as
$$\hH(\mathbf{Q})=\{\lub(\{\negbeta^1,\ldots,\negbeta^m\}) \ : \ \negbeta^1,\ldots,\negbeta^m\in \hGamma(\mathbf{Q})\}.$$
\end{theorem}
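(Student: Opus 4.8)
The plan is to establish the two inclusions separately. For $\supseteq$, since every absolute maximal element lies in $\hH(\negQ)$, it suffices to check that $\hH(\negQ)$ is closed under the $\lub$ of two of its elements (and then to iterate, $\lub$ being associative). Given $\negbeta,\neggamma\in\hH(\negQ)$ realized by $h_1,h_2\in R_{\negQ}$, I would look at $h=h_1+\lambda h_2$: at each coordinate $k$ where $-v_{Q_k}(h_1)\neq -v_{Q_k}(h_2)$ the pole order of $h$ is automatically the larger of the two for any nonzero $\lambda$, while at the coordinates where they coincide — there are at most $m-1$ such coordinates once $\negbeta\neq\neggamma$ — the leading coefficients at the rational point $Q_k$ cancel for at most one value of $\lambda\in\mathbb{F}_q^{\times}$. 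As $q\geq m$ gives $q-1\geq m-1$ nonzero scalars, a single $\lambda$ can be chosen good at all these coordinates simultaneously, so $-v_{Q_k}(h)=\max\{\beta_k,\gamma_k\}$ for every $k$ and $\lub(\negbeta,\neggamma)\in\hH(\negQ)$. This is exactly where the hypothesis $q\geq m$ is used.

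For $\subseteq$, fix $\negalpha\in\hH(\negQ)$. The first point is that each $\nabla_i^m(\negalpha)$ is nonempty (it contains $\negalpha$) and, crucially, finite. To see finiteness, take $\negbeta\in\nabla_i^m(\negalpha)$ realized by $h\in R_{\negQ}$; the conditions $\beta_i=\alpha_i$ and $\beta_j\leq\alpha_j$ bound the pole orders from above, so $\deg (h)_\infty\leq\sum_k\max\{\alpha_k,0\}=:D$. Since $h$ has poles only at the $Q_k$, every coordinate with $\beta_k<0$ forces a zero of order $|\beta_k|$ at $Q_k$, and hence $\sum_{\beta_k<0}|\beta_k|\leq\deg (h)_0=\deg (h)_\infty\leq D$. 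Thus each coordinate of $\negbeta$ lies in $[-D,\alpha_k]$, so $\nabla_i^m(\negalpha)$ is finite.

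With finiteness available, for each $i$ I would pick a minimal element $\negbeta^i$ of $\nabla_i^m(\negalpha)$ with respect to the componentwise partial order. Such a $\negbeta^i$ is absolute maximal: from $\beta_j^i\leq\alpha_j$ one gets $\nabla_i^m(\negbeta^i)\subseteq\nabla_i^m(\negalpha)$, and every element of $\nabla_i^m(\negbeta^i)$ is $\leq\negbeta^i$ with equal $i$-th coordinate, so minimality forces $\nabla_i^m(\negbeta^i)=\{\negbeta^i\}$; by the equivalence (iii)$\Rightarrow$(i) of Proposition \ref{absmax}, $\negbeta^i\in\hGamma(\negQ)$. Finally, since $\beta^i_i=\alpha_i$ and $\beta^i_j\leq\alpha_j$ for all $i,j$, the $j$-th coordinate of $\lub\{\negbeta^1,\ldots,\negbeta^m\}$ equals $\max_i\beta^i_j=\alpha_j$ (the value $\alpha_j$ is attained at $i=j$ and never exceeded). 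Hence $\negalpha=\lub\{\negbeta^1,\ldots,\negbeta^m\}$ with all $\negbeta^i\in\hGamma(\negQ)$, which proves $\subseteq$.

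The main obstacle, I expect, lies in the two structural inputs rather than in assembling them: the $\lub$-closure of $\hH(\negQ)$ (the only place the counting hypothesis $q\geq m$ intervenes) and the finiteness of $\nabla_i^m(\negalpha)$. Once these are in place, the fact that a minimal element of $\nabla_i^m(\negalpha)$ is absolute maximal is a short order-theoretic consequence of Proposition \ref{absmax}, and the $\lub$ identity follows immediately from the coordinatewise bounds.
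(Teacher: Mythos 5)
A preliminary remark: the paper does not prove this statement at all --- it is quoted verbatim from \cite[Theorem 3.4]{TT} --- so your proposal can only be judged on its own merits. Your $\subseteq$ direction is correct and complete: the finiteness of $\nabla_i^m(\negalpha)$ via the bound $\deg(h)_\infty=\deg(h)_0\leq\sum_k\max\{\alpha_k,0\}$, the observation that a componentwise-minimal element $\negbeta^i$ of the finite nonempty set $\nabla_i^m(\negalpha)$ satisfies $\nabla_i^m(\negbeta^i)=\{\negbeta^i\}$ and is therefore absolute maximal by Proposition \ref{absmax}(iii)$\Rightarrow$(i), and the coordinatewise identity $\negalpha=\lub\{\negbeta^1,\ldots,\negbeta^m\}$ are all sound.

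The gap is in the $\supseteq$ direction, precisely in the sentence ``as $q\geq m$ gives $q-1\geq m-1$ nonzero scalars, a single $\lambda$ can be chosen good at all these coordinates simultaneously.'' To avoid up to $m-1$ forbidden values inside $\FF_q^{\times}$ one needs $q-1\geq(m-1)+1$, i.e.\ $q\geq m+1$: if $q=m$ and the forbidden values $-c_1^{(k)}/c_2^{(k)}$ at the $m-1$ coincident coordinates are pairwise distinct, they exhaust $\FF_q^{\times}$ and no good $\lambda$ exists. So, as written, your one-parameter argument proves the theorem only for $q>m$, not under the stated hypothesis. The repair is small but is a genuinely missing step. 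One fix: split into cases --- if $\negbeta$ and $\neggamma$ are comparable, then $\lub(\negbeta,\neggamma)\in\{\negbeta,\neggamma\}\subseteq\hH(\negQ)$ trivially; if they are incomparable, then at least two coordinates differ, one in each direction, so at most $m-2$ coordinates coincide, and $q-1\geq m-1>m-2$ guarantees a good $\lambda$. Another fix: use two-parameter combinations $h=\lambda_1h_1+\lambda_2h_2$; the bad pairs $(\lambda_1,\lambda_2)$ lie on a union of at most $m$ lines through the origin of $\FF_q^2$ (the axes $\lambda_1=0$, $\lambda_2=0$ coming from non-coincident coordinates, plus one line $\lambda_1c_1^{(k)}+\lambda_2c_2^{(k)}=0$ per coincident coordinate), and since $\FF_q^2$ contains $q+1>m$ distinct lines through the origin, a good pair exists whenever $q\geq m$. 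With either repair your two-inclusion strategy goes through.
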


In this way, $\hGamma(\mathbf{Q})$ can be seen as a \textit{generating set} of $\hH(\mathbf{Q})$ in the sense of \cite{gretchen1}. We observe that, unlike the case of generating set for classical Weierstrass semigroups of \cite{gretchen1}, the set $\hGamma(\mathbf{Q})$ is not finite. Nevertheless, it is finitely determined, in the sense that we describe in the following.\\

For $i=2,\ldots,m$, let $a_i$ be the smallest positive integer $t$ such that ${tQ_{i}-tQ_{i-1}}$ is a principal divisor on $\cX$. We can thus define the region
$$
\cC_m=\cC_m(\negQ):=\{\negalpha=(\alpha_1, \ldots , \alpha_m)\in \ZZ^m \ : \ 0\leq \alpha_i< a_i \ \mbox{for } i=2,\ldots,m\}.
$$
Let $\negeta^i = (\eta_1^i, \ldots, \eta_m^i)\in \ZZ^m$ be the $m$-tuple whose $j$-th coordinate is
$$
\eta^i_j=\left \lbrace \begin{array}{rcl}
-a_i & , & \mbox{if } j=i-1\\
a_i & , & \mbox{if } j=i\\
0 & , & \mbox{otherwise.}\\
\end{array}\right.
$$
Defining 
\begin{equation} \label{theta}
{\Theta_m=\Theta_m(\negQ):=\{ b_2 \negeta^2+ \ldots + b_{m} \negeta^{m}\in \ZZ^m \ : b_i \in \ZZ \ \mbox{for } i=2,\ldots,m\}},
\end{equation}
 we can state the following result concerning the absolute maximal elements and relative maximal elements in generalized Weierstrass semigroups at several points.

\begin{theorem}\cite[Theorem 3.7]{MTT}\label{maximals} Assume that $q \geq m$. The following holds:
$$\hGamma(\mathbf{Q})=(\hGamma(\mathbf{Q})\cap \cC_m(\negQ))+\Theta_m(\negQ)$$
$$\hLambda(\mathbf{Q})=(\hLambda(\mathbf{Q})\cap \cC_m(\negQ))+\Theta_m(\negQ).$$
\end{theorem}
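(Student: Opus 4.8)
The plan is to exploit the fact that translation by any element of $\Theta_m(\negQ)$ corresponds to multiplying a rational function by a \emph{principal} divisor supported on $\negQ$, and is therefore a symmetry of every object appearing in the statement. Concretely, each of the two asserted identities splits into the inclusion ``$\supseteq$'', which is just the $\Theta_m$-invariance of $\hGamma(\negQ)$ (resp. $\hLambda(\negQ)$), and the inclusion ``$\subseteq$'', which combines that invariance with the geometric fact that every $\Theta_m$-orbit in $\ZZ^m$ meets the fundamental region $\cC_m(\negQ)$. So the proof reduces to two structural lemmas: $\Theta_m$-invariance, and the orbit-meets-$\cC_m$ property.

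First I would record the source of the symmetry. For each $i\in\{2,\ldots,m\}$ the tuple $\negeta^i$ satisfies $D_{\negeta^i}=a_iQ_i-a_iQ_{i-1}$, which is principal by the very definition of $a_i$; write $D_{\negeta^i}=(f_i)$. Hence for any $\negtheta=\sum_{i=2}^m b_i\negeta^i\in\Theta_m$ the divisor $D_{\negtheta}=\big(\prod_{i=2}^m f_i^{\,b_i}\big)$ is principal, so $D_{\negalpha+\negtheta}=D_{\negalpha}+D_{\negtheta}$ is linearly equivalent to $D_{\negalpha}$ for every $\negalpha\in\ZZ^m$. Since linearly equivalent divisors have equal Riemann--Roch dimensions, $\ell(D_{\negalpha+\negtheta}-G)=\ell(D_{\negalpha}-G)$ for every divisor $G$. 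Applying this with $G=0$, $G=Q_i$ and $G=\sum_{j=1}^m Q_j$, and invoking Proposition \ref{prop 2.1.3 da tese}(1) and Proposition \ref{absmax}(iv), shows that $\negalpha\in\hH(\negQ)$ (resp. $\negalpha\in\hGamma(\negQ)$) if and only if $\negalpha+\negtheta$ is, for every $\negtheta\in\Theta_m$.

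For the relative maximal set I would argue directly, using that translation by $\negtheta$ maps $\hH(\negQ)$ bijectively onto itself (by the invariance just proved) and preserves coordinatewise all the defining equalities and strict inequalities. Thus $\nabla_J(\negalpha+\negtheta)=\nabla_J(\negalpha)+\negtheta$ for every $J\subsetneq\{1,\ldots,m\}$, so $\nabla_J(\negalpha)=\emptyset\iff\nabla_J(\negalpha+\negtheta)=\emptyset$; by Definition \ref{def:maximals} this gives $\Theta_m$-invariance of $\hLambda(\negQ)$ as well. Both invariances immediately yield ``$\supseteq$'' in the two identities.

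Finally I would show every orbit meets $\cC_m$. Given $\negalpha\in\ZZ^m$ I seek $b_2,\ldots,b_m$ with $\negalpha-\sum_{i=2}^m b_i\negeta^i\in\cC_m$, i.e. with coordinates $2,\ldots,m$ in $[0,a_i)$. Because $\negeta^i$ changes coordinate $i$ by $a_i$ and coordinate $i-1$ by $-a_i$, these constraints form a triangular system: choose $b_m$ to reduce coordinate $m$ modulo $a_m$, then $b_{m-1}$ for coordinate $m-1$, and so on down to $b_2$ (coordinate $1$ is unconstrained in $\cC_m$, so it plays no role). Combining this with the invariance: given $\negalpha\in\hGamma(\negQ)$, pick $\negtheta\in\Theta_m$ with $\negalpha-\negtheta\in\cC_m$; then $\negalpha-\negtheta\in\hGamma(\negQ)\cap\cC_m$ by invariance, and $\negalpha=(\negalpha-\negtheta)+\negtheta$ proves ``$\subseteq$'', identically for $\hLambda(\negQ)$. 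I expect the only delicate points to be the bookkeeping in this triangular reduction and the verification that the $\nabla_J$-translation identity survives the relative-maximal definition; the algebraic heart---that each $\negeta^i$ yields a principal divisor, so $\Theta_m$ acts by linear equivalence---is immediate from the definition of the $a_i$.
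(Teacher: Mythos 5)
Your proposal is correct, but there is nothing in the paper to compare it against: the authors do not prove this statement, they quote it verbatim from \cite[Theorem 3.7]{MTT} and use it as a black box. Your argument is a legitimate self-contained proof, and it follows the natural route (essentially that of the cited source): (a) $\Theta_m(\negQ)$ is a subgroup of $\ZZ^m$ whose elements are exponent vectors of principal divisors supported on $\{Q_1,\ldots,Q_m\}$, so translation by $\negtheta\in\Theta_m(\negQ)$ preserves $\hH(\negQ)$ and, via the $\ell$-characterizations of Proposition \ref{prop 2.1.3 da tese}(1) and Proposition \ref{absmax}(iv) (both valid under the standing hypothesis $q\geq m$), preserves $\hGamma(\negQ)$; (b) the identity $\nabla_J(\negalpha+\negtheta)=\nabla_J(\negalpha)+\negtheta$ transports relative maximality; (c) every $\Theta_m$-coset meets $\cC_m(\negQ)$ by the triangular reduction that fixes $b_m$, then $b_{m-1}$, down to $b_2$ (your bookkeeping here is right: the constraint on coordinate $j$ involves only $b_j$ and $b_{j+1}$, and coordinate $1$ is free). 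Two small remarks. First, in step (a) you can bypass Riemann--Roch entirely: if $\negalpha=(-v_{Q_1}(h),\ldots,-v_{Q_m}(h))$ with $h\in R_{\negQ}$ and $D_{\negtheta}=(g)$, then $g\in R_{\negQ}$ and $h/g$ realizes $\negalpha+\negtheta$, which gives the $\hH$-invariance directly from definition \eqref{generalized WS} and makes the bijection used in (b) explicit; this also shows the invariance of $\hH(\negQ)$ needs no hypothesis on $q$, while the hypothesis $q\geq m$ is genuinely needed only where you invoke Propositions \ref{prop 2.1.3 da tese} and \ref{absmax}. Second, your final rewriting $\negalpha=(\negalpha-\negtheta)+\negtheta$ tacitly uses that $\Theta_m(\negQ)$ is closed under negation; this is true since it is by construction a subgroup (all integer combinations of the $\negeta^i$), but it is worth stating, as the set-sum in the theorem only quantifies over elements of $\Theta_m(\negQ)$.
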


\begin{remark} \label{obs gamma}
Note that since $\hGamma(\mathbf{Q})\cap \cC_m$ and $\hLambda(\mathbf{Q})\cap \cC_m(\negQ)$ are finite and $\Theta_m$ is finitely generated, $\hGamma(\mathbf{Q})$ and $\hLambda(\mathbf{Q})$ are determined by a finite number of elements in $\hH(\negQ)$. Also, note that $\Gamma(\mathbf{Q}) = \hGamma(\mathbf{Q}) \cap \mathbb{N}_{0}^{m}$ and so if we get $\hGamma(\mathbf{Q})$ we have $\Gamma(\mathbf{Q})$.
\end{remark}

For $\negalpha=(\alpha_1,\ldots,\alpha_m)\in \ZZ^m$, let $\overline{\nabla}_i(\negalpha):=\{\negbeta \in \ZZ^m \ : \ \beta_i=\alpha_i \mbox{ and } \beta_j<\alpha_j \ \mbox{for } j \neq i\}$ and $\overline{\nabla}(\negalpha):=\bigcup_{i=1}^m \overline{\nabla}_i(\negalpha)$, and define $\Lambda(\negQ):=\hLambda(\negQ)\cap \NN_0^m$. In \cite{TT2} the following characterization of gaps and pure gaps of a Weierstrass semigroup at several in terms of maximals elements in a generalized Weierstrass semigroup is provided.

\begin{theorem}  \cite[Theorems 3.1 and 3.2]{TT2} \label{teo gaps}
Assume that $q\geq m$. Then

1) $\displaystyle G(\negQ)=\bigcup_{\negbeta^*\in\Lambda(\negQ)} (\overline{\nabla}(\negbeta^*)\cap \NN_0^m),$ and

2) $\displaystyle G_0(\negQ)=\bigcup_{\scriptsize{(\negbeta^1,\ldots,\negbeta^m)\in\Lambda(\negQ)^m}}\left(\bigcap_{i=1}^m \overline{\nabla}_i(\negbeta^i)\right).$
\end{theorem}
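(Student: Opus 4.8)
The plan is to transfer everything to the generalized semigroup $\hH(\negQ)$, where Proposition~\ref{prop 2.1.3 da tese} converts gaps into emptiness of the sets $\nabla_i^m$. Since $q\ge m$, one has $H(\negQ)=\hH(\negQ)\cap\NN_0^m$ by \cite{BT}, hence $G(\negQ)=\NN_0^m\setminus\hH(\negQ)$. Reading Proposition~\ref{prop 2.1.3 da tese} for $\neggamma\in\NN_0^m$, I would first record the two reformulations on which both statements rest: $\neggamma\in G(\negQ)$ if and only if $\nabla_i^m(\neggamma)=\emptyset$ for \emph{some} $i$, and $\neggamma\in G_0(\negQ)$ if and only if $\nabla_i^m(\neggamma)=\emptyset$ for \emph{every} $i$. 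The second equivalence is immediate, since by definition $\neggamma$ is a pure gap exactly when $\ell(D_{\neggamma})=\ell(D_{\neggamma}-Q_j)$ for all $j$, which is precisely $\nabla_j^m(\neggamma)=\emptyset$ for all $j$ via Proposition~\ref{prop 2.1.3 da tese}(2) (this also makes $\neggamma\notin\hH(\negQ)$ automatic, so no separate check that a pure gap is a gap is needed); the first follows because membership in $\hH(\negQ)$ requires the strict increase at every coordinate.

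The core of the argument is the following bridging lemma: for $\neggamma\in\ZZ^m$ and a fixed index $i$,
\[
\nabla_i^m(\neggamma)=\emptyset \iff \exists\,\negbeta^*\in\hLambda(\negQ)\ \text{with}\ \neggamma\in\overline{\nabla}_i(\negbeta^*).
\]
Granting it, the theorem assembles formally. If $\neggamma\in\overline{\nabla}_i(\negbeta^*)$ then $\neggamma\le\negbeta^*$ coordinatewise, so $\neggamma\in\NN_0^m$ forces $\negbeta^*\in\NN_0^m$, i.e.\ $\negbeta^*\in\Lambda(\negQ)$; this is the only bookkeeping needed to pass from $\hLambda(\negQ)$ to $\Lambda(\negQ)$. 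Statement~(1) is then the ``some $i$'' form of the lemma, unioned over $i$ and over the witnesses $\negbeta^*$, and statement~(2) is the ``every $i$'' form, with the witness $\negbeta^i$ allowed to depend on $i$, producing a tuple $(\negbeta^1,\dots,\negbeta^m)\in\Lambda(\negQ)^m$ and the intersection $\bigcap_{i=1}^m\overline{\nabla}_i(\negbeta^i)$.

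For the lemma itself, in the direction $(\Leftarrow)$ I observe that $\neggamma\in\overline{\nabla}_i(\negbeta^*)$ gives $\nabla_i^m(\neggamma)\subseteq\nabla_{\{i\}}(\negbeta^*)$, so it suffices to know that a maximal element carries no semigroup point on its coordinate-$i$ face, i.e.\ $\nabla_{\{i\}}(\negbeta^*)=\emptyset$. For \emph{absolute} maximal elements this is immediate from Proposition~\ref{absmax}, which gives $\nabla_i^m(\negbeta^*)=\{\negbeta^*\}$ and hence $\nabla_{\{i\}}(\negbeta^*)=\emptyset$; for \emph{relative} maximal elements I would obtain the same conclusion through the canonical reflection $\negbeta\mapsto\kappa-\negbeta$, built from a canonical divisor, which exchanges $\hGamma(\negQ)$ and $\hLambda(\negQ)$ and turns the Riemann--Roch equalities defining relative maximality into the face-emptiness above. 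In the direction $(\Rightarrow)$, starting from $\nabla_i^m(\neggamma)=\emptyset$ I would build $\negbeta^*$ by fixing the $i$-th coordinate at $\gamma_i$ and raising every other coordinate until the point reaches the boundary of $\hH(\negQ)$; the resulting corner is a maximal element with $\neggamma\in\overline{\nabla}_i(\negbeta^*)$, and its existence is guaranteed by the finite-determination Theorem~\ref{maximals}, which confines the search to the finite box $\cC_m(\negQ)$ modulo the lattice $\Theta_m(\negQ)$.

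The step I expect to be the main obstacle is precisely the face-emptiness half of the bridging lemma: the definition of relative maximality only constrains $\nabla_J$ for $\#J\ge 2$, whereas face-emptiness is a statement about $\#J=1$, so it cannot be read off from the least-upper-bound combinatorics alone and must be extracted from the Riemann--Roch content of those conditions. Making the reflection $\negbeta\mapsto\kappa-\negbeta$ precise, checking that it interchanges absolute and relative maximal elements, and verifying that it carries the gap description across, is where the real work lies; the hypothesis $q\ge m$ enters throughout, since it is what lets the relevant functions be combined by $\FF_q$-linear combinations realizing least upper bounds at the $m$ points simultaneously. Once the lemma is established, the two displayed identities of the theorem follow from the reductions of the first two paragraphs.
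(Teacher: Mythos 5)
First, a point of reference: the paper does not prove Theorem~\ref{teo gaps} at all --- it is quoted as background from \cite[Theorems 3.1 and 3.2]{TT2} --- so there is no internal proof to compare yours against, and I judge the proposal on its own terms, using only results the paper quotes. Your two reformulations (for $\neggamma\in\NN_0^m$: $\neggamma\in G(\negQ)$ iff $\nabla_i^m(\neggamma)=\emptyset$ for \emph{some} $i$, and $\neggamma\in G_0(\negQ)$ iff this holds for \emph{every} $i$) are correct consequences of Proposition~\ref{prop 2.1.3 da tese} and $H(\negQ)=\hH(\negQ)\cap\NN_0^m$, and the formal assembly of both identities from your bridging lemma --- including the bookkeeping that $\neggamma\geq 0$ forces the witnesses into $\Lambda(\negQ)$, and (a point you leave implicit) that any element of $\bigcap_{i}\overline{\nabla}_i(\negbeta^i)$ with $\negbeta^i\in\NN_0^m$ automatically lies in $\NN_0^m$ --- is sound. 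The problem is that the bridging lemma \emph{is} the theorem, and your treatment of both of its directions has genuine gaps.

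In the direction $(\Leftarrow)$, the face-emptiness you need, namely $\nabla_{\{i\}}(\negbeta^*)=\emptyset$ for every $\negbeta^*\in\hLambda(\negQ)$, does not have to be manufactured by a duality: it is exactly the condition $\nabla(\negalpha)=\emptyset$ appearing in item (2) of the quoted Proposition~\ref{relmax} (the paper's Definition~\ref{def:maximals} omits the singleton condition, which is what misled you; in \cite{MTT,TT2} it is part of what ``maximal'' means). Your substitute --- a reflection $\negbeta\mapsto\kappa-\negbeta$ interchanging $\hGamma(\negQ)$ and $\hLambda(\negQ)$ --- is not available in the generality of the theorem: it needs a canonical divisor of the form $D_{\kappa}$ supported at $Q_1,\ldots,Q_m$, which exists for the special points of this paper (Lemma~\ref{K canonico}) but not for arbitrary $\negQ$ with $q\geq m$; and you leave precisely this, which you call the ``real work'', undone. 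In the direction $(\Rightarrow)$ the construction is under-specified and appeals to the wrong result: Theorem~\ref{maximals} only describes the translation structure of $\hGamma$ and $\hLambda$ under $\Theta_m$ and yields no existence statement. What does work is the following. The set $U=\{\negbeta\in\ZZ^m \, : \, \beta_i=\gamma_i,\ \beta_j\geq\gamma_j \text{ for } j\neq i,\ \nabla_i^m(\negbeta)=\emptyset\}$ contains $\neggamma$ and is bounded, since $\deg D_{\negbeta}\geq 2g$ forces $\ell(D_{\negbeta})=\ell(D_{\negbeta}-Q_i)+1$ by Riemann--Roch; so $U$ has a maximal element $\negbeta$. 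But this ``corner'' satisfies $\nabla_i^m(\negbeta)=\emptyset$, hence it is \emph{not} in $\hH(\negQ)$ and cannot itself be the maximal element you claim it is. The element you want is $\negbeta^*=\negbeta+\sum_{j\neq i}\nege_j$: maximality of $\negbeta$ in $U$ provides, for each $j\neq i$, some $\negdelta^j\in\hH(\negQ)$ with $\delta^j_i=\gamma_i$, $\delta^j_j=\beta_j+1$ and $\delta^j_k\leq\beta_k$ for $k\neq i,j$, i.e.\ $\negdelta^j\in\nabla_{i,j}(\negbeta^*)$, while $\nabla_{i}(\negbeta^*)=\nabla_i^m(\negbeta)=\emptyset$; then the implication $(5)\Rightarrow(1)$ of Proposition~\ref{relmax} certifies $\negbeta^*\in\hLambda(\negQ)$, and clearly $\neggamma\in\overline{\nabla}_i(\negbeta^*)$. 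Note also that ``maximal'' is not enough for your purposes: for $m\geq 3$ an element whose singleton faces are all empty need be neither absolute nor relative maximal, and the unions in the theorem run over $\Lambda(\negQ)$, so relative maximality must be verified --- your sketch never does this. Until these steps are supplied, the bridging lemma, and with it both identities, remains unproven.
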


An equivalent formulation for the absolute and relative maximal property can be made with respect to the concept of discrepancy, which has been introduced in \cite{DP}. This reformulation will be especially useful for us in calculations, when determining absolute and relative maximal elements.

\begin{definition}\label{def:discrepancy} 
Let $P$ and $Q$ be distinct rational points $P$ and $Q$ on $\mathcal{X}$. A divisor $A \in \mbox{Div}(\mathcal{X})$ is called a \textit{discrepancy} with respect to $P$ and $Q$ if $\mathcal{L}(A)\neq \mathcal{L}(A-Q)$ and $\mathcal{L}(A-P)=\mathcal{L}(A-P-Q)$.
\end{definition}

\begin{proposition} \cite[Proposition 2.8]{TT2} \label{relmax}
Let $\negalpha\in \ZZ^m$ and assume that $q \geq m$. The following statements are equivalent:
\begin{enumerate}[\rm (1)]
\item $\negalpha \in \hLambda(\negQ)$;
\item $\nabla(\negalpha)=\emptyset$ and $\ell(D_{\negalpha})=\ell(D_{\negalpha-\textbf{1}})+(m-1)$;
\item for $i,j\in I$ with $j\neq i$, $D_{\negalpha-\textbf{1}}+Q_i+Q_j$ is discrepancy w.r.t. $Q_i$ and $Q_j$;
\item there exists $i\in I$ such that $D_{\negalpha-\textbf{1}}+Q_i+Q_j$ is discrepancy w.r.t. $Q_i$ and $Q_j$, for any $j\neq i$;
\item there exists $i\in I$ such that $\nabla_i(\negalpha)=\emptyset$ and $\nabla_{i,j}(\negalpha)\neq \emptyset$ for every $j\neq i$.
\end{enumerate}
\end{proposition}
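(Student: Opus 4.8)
The plan is to translate all five conditions into statements about Riemann--Roch dimensions of divisors supported on $\negQ$, and then to reduce the proposition to bookkeeping with the jumps $\ell(E+Q)-\ell(E)\in\{0,1\}$. For $S\subseteq\{1,\ldots,m\}$ put $f(S):=\ell\big(D_{\negalpha-\mathbf{1}}+\sum_{k\in S}Q_k\big)$, so that $f(\emptyset)=\ell(D_{\negalpha-\mathbf{1}})$ and $f(\{1,\ldots,m\})=\ell(D_{\negalpha})$. The first step is a dictionary between the sets $\nabla_J(\negalpha)$ and these dimensions. Writing $A_J:=D_{\negalpha-\mathbf{1}}+\sum_{k\in J}Q_k$, a function realising an element of $\nabla_J(\negalpha)$ is exactly an element of $\cL(A_J)$ lying outside each hyperplane $\cL(A_J-Q_j)$, $j\in J$; arguing as in Proposition \ref{prop 2.1.3 da tese} one gets $\nabla_J(\negalpha)\neq\emptyset$ if and only if $f(J)=f(J\setminus\{j\})+1$ for every $j\in J$. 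The hypothesis $q\geq m$ enters precisely here: a vector space over $\Fq$ with $q\geq\#J$ is never a union of $\#J$ proper hyperplanes, and this is what produces the witnessing function. In the same spirit, unwinding Definition \ref{def:discrepancy} shows that $A_{ij}:=D_{\negalpha-\mathbf{1}}+Q_i+Q_j$ is a discrepancy with respect to $Q_i,Q_j$ if and only if $f(\{i,j\})=f(\{i\})+1$ and $f(\{j\})=f(\emptyset)$.

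The second ingredient is the monotonicity of these jumps: if $E\leq E'$ and $E'-E$ is supported away from a point $Q$, then $\ell(E'+Q)-\ell(E')\geq\ell(E+Q)-\ell(E)$. This is elementary, since a function with a pole at $Q$ of order exactly one more than the coefficient of $Q$ in $E$ and with poles bounded by $E$ away from $Q$ keeps that exact order once the bound is enlarged away from $Q$, so a witness for the smaller jump also witnesses the larger one. Equivalently, the marginal contribution of adjoining a fixed point to the base $S$ can only grow as $S$ grows.

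With these tools the equivalences among (2), (3), (4), (5) become arithmetic of $0/1$ marginals, and I would prove them as the sub-cycle $(2)\Rightarrow(3)\Rightarrow(4)\Rightarrow(5)\Rightarrow(2)$. For $(2)\Rightarrow(3)$, ordering the points so that $Q_i$ comes first makes the first jump equal to $0$ (a vanishing singleton marginal) and, the total being $m-1$, all later jumps equal to $1$; this is exactly the pair of discrepancy equalities. The step $(3)\Rightarrow(4)$ is immediate. For $(4)\Rightarrow(5)$ the discrepancy equalities give $f(\{i,j\})=f(\{i\})+1$ and $f(\{j\})=f(\emptyset)$, and since the marginal $f(\{i,j\})-f(\{j\})$ is at most $1$ we obtain $f(\{i\})=f(\emptyset)$, i.e. $\nabla_i(\negalpha)=\emptyset$, while $f(\{i,j\})=f(\{i\})+1=f(\{j\})+1$ says $\nabla_{i,j}(\negalpha)\neq\emptyset$. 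For $(5)\Rightarrow(2)$, monotonicity propagates the jump $f(\{i,j\})-f(\{i\})=1$ to $f(S\cup\{j\})-f(S)=1$ for every $S\supseteq\{i\}$, giving total marginal $m-1$, while $f(\{i,j\})=f(\{j\})+1$ forces every singleton marginal to vanish; this is (2). Finally $(2)\Rightarrow(1)$ is clean: in any ordering of the points the first jump is $0$ and, the total being $m-1$, the remaining jumps are all $1$, so the dictionary yields $f(J)=f(J\setminus\{j\})+1$ for every proper $J$ with $\#J\geq2$ and $j\in J$, i.e. $\nabla_J(\negalpha)\neq\emptyset$, and also $\negalpha\in\hH(\negQ)$ by Proposition \ref{prop 2.1.3 da tese}(1); thus $\negalpha\in\hLambda(\negQ)$.

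The implication $(1)\Rightarrow(2)$ is the step I expect to be the main obstacle. By the dictionary, relative maximality gives $f(J)=f(J\setminus\{j\})+1$ for all proper $J$ with $\#J\geq2$ and all $j\in J$, but it does not by itself control the singleton marginals or the top marginal; one must still derive $\nabla(\negalpha)=\emptyset$ and that the total jump is $m-1$ rather than $m$. The leverage is monotonicity together with the fact that a relative maximal element is in particular a maximal element of $\hH(\negQ)$, not dominated along any coordinate: were some singleton marginal equal to $1$, supermodularity would propagate a jump of $1$ along every chain, forcing every marginal to be $1$ and the total to be $m$, and would then produce an element of $\hH(\negQ)$ strictly above $\negalpha$ that agrees with it in one coordinate, contradicting maximality. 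Excluding this fully generic case forces the singleton marginals to vanish and the total to equal $m-1$, which is (2). This matching of the $\nabla_J$-description of $\hLambda(\negQ)$ with its dimension-count description is the crux, and it is exactly where Proposition \ref{absmax}, Proposition \ref{prop 2.1.3 da tese} and the standing hypothesis $q\geq m$ are indispensable.
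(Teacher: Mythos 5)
This proposition is quoted in the paper from \cite[Proposition 2.8]{TT2} with no proof given, so your proposal has to stand on its own. Most of it does: the dictionary between $\nabla_J(\negalpha)\neq\emptyset$ and the unit jumps $f(J)=f(J\setminus\{j\})+1$ (this is indeed where $q\geq m$ enters, since an $\Fq$-vector space is not a union of at most $q$ proper subspaces), the supermodularity of jumps, the translation of Definition \ref{def:discrepancy}, and the cycle $(2)\Rightarrow(3)\Rightarrow(4)\Rightarrow(5)\Rightarrow(2)$ together with $(2)\Rightarrow(1)$ are all correct, provided $\nabla_i$ and $\nabla$ in items (2) and (5) are read as the \emph{strict} sets $\nabla_{\{i\}}(\negalpha)$ and $\bigcup_i\nabla_{\{i\}}(\negalpha)$, which is what you do.

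The genuine gap is in $(1)\Rightarrow(2)$, exactly where you feared. You appeal to maximality in an upward sense (\emph{not dominated along any coordinate}) and claim that a singleton marginal equal to $1$ would produce an element of $\hH(\negQ)$ strictly above $\negalpha$ agreeing with it in one coordinate. This fails twice over. First, that upward notion is vacuous: for any $\negalpha\in\hH(\negQ)$ and any $i$, choose $N$ with $\deg(D_{\negalpha}+NQ_i)\geq 2g+1$; then $\cL(D_{\negalpha}+NQ_i)\neq\cL(D_{\negalpha})$, and the $\lub$ of $\negalpha$ with the pole vector of a witnessing function lies in $\hH(\negQ)$, is strictly above $\negalpha$, and agrees with it in every coordinate except $i$ — so \emph{no} element of $\hH(\negQ)$ is maximal in that sense, and no contradiction can be extracted from it. Second, a singleton marginal $f(\{i\})=f(\emptyset)+1$ produces, by your own dictionary, an element of $\hH(\negQ)$ \emph{below} $\negalpha$ (a member of $\nabla_{\{i\}}(\negalpha)$); nothing above $\negalpha$ can be manufactured from data on divisors bounded by $D_{\negalpha}$.

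The repair is definitional, and it matters because Definition \ref{def:maximals} in this paper is itself incomplete. In \cite{MTT} and \cite{TT2} a relative maximal element is additionally required to be maximal in Delgado's \emph{downward} sense, i.e. $\nabla(\negalpha)=\bigcup_i\nabla_{\{i\}}(\negalpha)=\emptyset$, on top of $\nabla_J(\negalpha)\neq\emptyset$ for all proper $J$ with $\#J\geq 2$. Under the literal Definition \ref{def:maximals} the implication $(1)\Rightarrow(2)$ is simply false: any $\negalpha$ with $\deg D_{\negalpha-\mathbf{1}}\geq 2g$ has, by Riemann--Roch, every marginal equal to $1$, hence $\negalpha\in\hH(\negQ)$ and every $\nabla_J(\negalpha)\neq\emptyset$, so it is relative maximal in the literal sense, yet $\nabla(\negalpha)\neq\emptyset$ and $\ell(D_{\negalpha})=\ell(D_{\negalpha-\mathbf{1}})+m$. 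Once the maximality clause is included in (1), your machinery finishes the proof with no contradiction argument at all: $\nabla_{\{i\}}(\negalpha)=\emptyset$ is, by the dictionary, literally $f(\{i\})=f(\emptyset)$, and the total $f(\{1,\ldots,m\})=f(\emptyset)+(m-1)$ follows from your chain count, the first step contributing $0$, the intermediate steps $1$ (from $\nabla_J(\negalpha)\neq\emptyset$, $\#J\geq 2$), and the last step $1$ (from $\negalpha\in\hH(\negQ)$).
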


\begin{proposition} \cite[Proposition 3]{TT} \label{prop equiv discrepancia}
Let $\mathcal{X}$ be a curve over $\mathbb{F}_q$, $r$ be an integer such that $1 \leq r \leq q$ and $Q_1, \ldots , Q_r$ be rational points on $\mathcal{X}$. Let $\negalpha= (\alpha_1, \ldots , \alpha_{r}) \in \ZZ^r$ and $\mathbf{Q} = (Q_1, \ldots , Q_r)$. The following statements are equivalent:
\begin{enumerate}[\rm (i)]
\item $\negalpha \in \hGamma(\mathbf{Q})$;
\item $\alpha_1 Q_1 + \ldots + \alpha_r Q_r$ is a discrepancy with respect to any pair of distinct points in $\{Q_1,\ldots,Q_{r}\}$.
\end{enumerate}
\end{proposition}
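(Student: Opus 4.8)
The plan is to convert everything into Riemann--Roch dimensions and read off both implications from the two equalities that characterize membership in $\hGamma(\mathbf{Q})$. As a preliminary I would record the dimension form of Definition~\ref{def:discrepancy}: for $A=D_{\negalpha}$, the divisor $D_{\negalpha}$ is a discrepancy with respect to $Q_i$ and $Q_j$ exactly when
$$\ell(D_{\negalpha}-Q_i)=\ell(D_{\negalpha}-Q_j)=\ell(D_{\negalpha}-Q_i-Q_j)=\ell(D_{\negalpha})-1.$$
This follows from the elementary fact that removing one rational point drops $\ell$ by $0$ or $1$: writing $a=\ell(D_{\negalpha})$, $b=\ell(D_{\negalpha}-Q_i)$, $c=\ell(D_{\negalpha}-Q_j)$, $d=\ell(D_{\negalpha}-Q_i-Q_j)$, the defining conditions $c=a-1$ and $d=b$ force $b=c=d=a-1$; in particular discrepancy is symmetric in $Q_i,Q_j$, so (ii) may be read for unordered pairs. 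The case $r=1$ being degenerate (no pairs occur), I assume $r\geq 2$ from now on, so that $q\geq m=r$ legitimizes the use of Propositions~\ref{prop 2.1.3 da tese} and~\ref{absmax}.

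For (i)~$\Rightarrow$~(ii): by Proposition~\ref{absmax} the hypothesis $\negalpha\in\hGamma(\mathbf{Q})$ gives $\ell(D_{\negalpha})=\ell(D_{\negalpha}-\sum_{k=1}^{r}Q_k)+1$, and since $\hGamma(\mathbf{Q})\subseteq\hH(\mathbf{Q})$, Proposition~\ref{prop 2.1.3 da tese}(1) gives $\ell(D_{\negalpha}-Q_i)=\ell(D_{\negalpha})-1$ for every $i$. Fixing $i\neq j$ I would then squeeze, using $D_{\negalpha}-\sum_{k}Q_k\leq D_{\negalpha}-Q_i-Q_j\leq D_{\negalpha}-Q_i$ and monotonicity of $\ell$:
$$\ell(D_{\negalpha})-1=\ell\!\left(D_{\negalpha}-\sum_{k=1}^{r}Q_k\right)\leq \ell(D_{\negalpha}-Q_i-Q_j)\leq \ell(D_{\negalpha}-Q_i)=\ell(D_{\negalpha})-1.$$
Hence $\ell(D_{\negalpha}-Q_i-Q_j)=\ell(D_{\negalpha})-1$, and together with the equalities for the single points this is precisely the dimension form of discrepancy for the pair $Q_i,Q_j$.

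For (ii)~$\Rightarrow$~(i): the preliminary yields $\ell(D_{\negalpha}-Q_i)=\ell(D_{\negalpha})-1$ for all $i$, so Proposition~\ref{prop 2.1.3 da tese}(1) already gives $\negalpha\in\hH(\mathbf{Q})$; by Proposition~\ref{absmax} it then suffices to prove $\ell(D_{\negalpha}-\sum_{k}Q_k)=\ell(D_{\negalpha})-1$. The key step is to upgrade the pairwise equalities to the full sum at the level of spaces, via valuations. Fix $i$. For each $j\neq i$ the discrepancy condition reads $\mathcal{L}(D_{\negalpha}-Q_i)=\mathcal{L}(D_{\negalpha}-Q_i-Q_j)$, i.e. every $f\in\mathcal{L}(D_{\negalpha}-Q_i)$ satisfies $v_{Q_j}(f)\geq-\alpha_j+1$. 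Since such an $f$ also satisfies $v_{Q_i}(f)\geq-\alpha_i+1$ and $v_P(f)\geq 0$ off the support of $D_{\negalpha}$, one gets $(f)+D_{\negalpha}-\sum_{k=1}^{r}Q_k\geq 0$, that is $f\in\mathcal{L}(D_{\negalpha}-\sum_{k}Q_k)$. Thus $\mathcal{L}(D_{\negalpha}-Q_i)\subseteq\mathcal{L}(D_{\negalpha}-\sum_{k}Q_k)$; the reverse inclusion is automatic, so the two spaces coincide and $\ell(D_{\negalpha}-\sum_{k}Q_k)=\ell(D_{\negalpha}-Q_i)=\ell(D_{\negalpha})-1$, which by Proposition~\ref{absmax} gives $\negalpha\in\hGamma(\mathbf{Q})$.

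The only genuinely delicate point is this last upgrade from pairwise to simultaneous vanishing: a naive one-point-at-a-time chain through $D_{\negalpha}-Q_1,\,D_{\negalpha}-Q_1-Q_2,\ldots$ cannot be controlled beyond the second step, because pairwise discrepancy says nothing \emph{a priori} about triple intersections of the spaces. The valuation argument sidesteps this obstacle by noting that mere membership in $\mathcal{L}(D_{\negalpha}-Q_i)$ already forces the sharp lower bound $v_{Q_j}(f)\geq-\alpha_j+1$ for \emph{every} $j\neq i$ at once. I expect the remaining ingredients --- the $0/1$-drop lemma, the symmetry of discrepancy, and the degenerate case $r=1$ --- to be routine.
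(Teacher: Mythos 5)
Your proof is correct, and in fact there is nothing in the paper to compare it against: Proposition~\ref{prop equiv discrepancia} is quoted from \cite{TT} without proof, so your argument stands as a self-contained verification using only tools the paper does state, namely Propositions~\ref{prop 2.1.3 da tese} and~\ref{absmax}. Your preliminary reduction of Definition~\ref{def:discrepancy} to the symmetric dimension form
$$\ell(D_{\negalpha}-Q_i)=\ell(D_{\negalpha}-Q_j)=\ell(D_{\negalpha}-Q_i-Q_j)=\ell(D_{\negalpha})-1$$
is right: since removing a rational point drops $\ell$ by $0$ or $1$, the conditions $c=a-1$ and $d=b$ together with $d\leq c$ and $b\geq a-1$ force $b=c=d=a-1$, and this also disposes of the order-of-the-pair ambiguity in (ii). The forward implication is a clean squeeze between Proposition~\ref{absmax}(iv) and Proposition~\ref{prop 2.1.3 da tese}(1). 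The converse is where the real content lies, and you identify it accurately: pairwise equalities $\mathcal{L}(D_{\negalpha}-Q_i)=\mathcal{L}(D_{\negalpha}-Q_i-Q_j)$ (each of which holds as an equality of \emph{spaces}, being an equality of nested finite-dimensional spaces) mean that any $f\in\mathcal{L}(D_{\negalpha}-Q_i)$ satisfies \emph{all} the bounds $v_{Q_j}(f)\geq -\alpha_j+1$, $j\neq i$, simultaneously, hence lies in $\mathcal{L}(D_{\negalpha}-\sum_{k}Q_k)$; equivalently, an intersection of Riemann--Roch spaces is the Riemann--Roch space of the coefficientwise minimum divisor. This is exactly the step at which a naive one-point-at-a-time induction stalls, and your valuation argument settles it, after which Proposition~\ref{absmax}(iv) (legitimately applicable, since you first establish $\negalpha\in\hH(\mathbf{Q})$ via Proposition~\ref{prop 2.1.3 da tese}(1)) closes the loop. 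Your restriction to $r\geq 2$ is also the right call: for $r=1$ condition (ii) is vacuous while (i) is not, so the equivalence as literally quoted is only meaningful for $r\geq 2$; this is a defect of the statement, not of your proof.
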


We end this section presenting a technical lemma that will be an important tool in computations with discrepancies in the next sections.

\begin{lemma}\cite[Noether's Reduction Lemma]{fulton} \label{lemma noether}
Let $D$ be a divisor, $P \in \mathcal{X}$ and let $K$ be a canonical divisor. If $\dim(\mathcal{L}(D))>0$ and $\dim(\mathcal{L}(K-D-P)) \neq \dim(\mathcal{L}(K-D))$, then $\dim(\mathcal{L}(D+P))=\dim(\mathcal{L}(D))$.
\end{lemma}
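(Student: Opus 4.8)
The plan is to reduce the statement to the Riemann--Roch theorem together with the elementary ``one-point'' estimate that removing a single rational point from a divisor lowers the dimension of its Riemann--Roch space by at most one. Concretely, I would first record that
$$\ell(G-P)\le \ell(G)\le \ell(G-P)+1$$
holds for every divisor $G$: the left inequality is just the inclusion $\mathcal{L}(G-P)\subseteq \mathcal{L}(G)$, and the right one follows because the quotient $\mathcal{L}(G)/\mathcal{L}(G-P)$ embeds, via evaluation of the leading Laurent coefficient at the rational point $P$, into the one-dimensional residue field of $P$. In particular $\ell(G)-\ell(G-P)\in\{0,1\}$ for every $G$.

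Applying this estimate with $G=K-D$ converts the qualitative hypothesis $\ell(K-D-P)\neq \ell(K-D)$ into the precise equality
$$\ell(K-D-P)=\ell(K-D)-1.$$
This is the only step that requires a little care, since one must pick out the correct direction of the estimate — the difference can only be $0$ or $1$, and the hypothesis rules out $0$ — before substituting.

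Finally I would invoke Riemann--Roch, in the form $\ell(G)=\deg(G)-g+1+\ell(K-G)$, for the two divisors $D+P$ and $D$, and subtract the resulting identities. The genus terms cancel and the degree terms differ by the single unit coming from $\deg(D+P)=\deg(D)+1$, so that
$$\ell(D+P)-\ell(D)=1+\ell(K-D-P)-\ell(K-D)=1+(-1)=0,$$
which is exactly the asserted equality $\ell(D+P)=\ell(D)$. Since the computation is this short there is no genuine obstacle; the only thing to watch is the bookkeeping of the dual terms. I would also remark that the hypothesis $\dim(\mathcal{L}(D))>0$ is not actually needed for this Riemann--Roch argument — it is a remnant of the more geometric formulation in \cite{fulton} — because the identities above force $\ell(D+P)=\ell(D)$ whether or not $\mathcal{L}(D)$ is trivial.
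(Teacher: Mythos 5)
Your proof is correct, but it takes a genuinely different route from the source the paper relies on: the paper gives no proof of this lemma at all, citing it directly from Fulton, where Noether's Reduction Lemma is established by classical geometric machinery (adjoint curves and Noether's fundamental theorem) \emph{before} Riemann--Roch, precisely because there it serves as a stepping stone in the proof of Riemann--Roch itself. You instead deduce the lemma \emph{from} Riemann--Roch: the identity $\ell(D+P)-\ell(D)=1+\ell(K-D-P)-\ell(K-D)$, combined with the one-point estimate $\ell(K-D)-\ell(K-D-P)\in\{0,1\}$, which the hypothesis upgrades to the exact value $1$, immediately gives $\ell(D+P)=\ell(D)$. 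In Fulton's own development this argument would be circular, but in the present paper's context it is a legitimate, self-contained proof, since the authors take Riemann--Roch from Stichtenoth's function-field treatment, which is proved independently of this lemma. What your route buys is brevity, plus the (correct) observation that the hypothesis $\dim(\mathcal{L}(D))>0$ is redundant once Riemann--Roch is available; it is needed only in the pre-Riemann--Roch geometric proof. One point you should make explicit: both your one-point estimate and the conclusion of the lemma require $P$ to have degree one --- for a closed point of degree $d>1$ one only gets $\ell(G)\le\ell(G-P)+\deg P$, and the statement genuinely fails (e.g.\ for $D=0$ and $P$ a degree-two fibre of a hyperelliptic pencil, where $\ell(K-P)=g-1\neq g$ yet $\ell(P)=2\neq 1$). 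This is harmless here, since Fulton works over an algebraically closed field and the paper applies the lemma only to rational points, but the assumption is used silently in your residue-field argument and deserves a sentence.
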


\section{Generalized Weierstrass Semigroup at certain $m+1$ points on $\mathcal{X}_{a,b,n,s}$}

In this section we present our main results about Weierstrass semigroups and generalized Weierstrass semigroups at certain $m+1$ points on the curve $\mathcal{X}_{a,b,n,s}$ over $\mathbb{F}_{q^{2n}}$, where $1 \leq m \leq q/p^b$. Let $P_{\infty}$ and $P_{(\alpha,\beta,0)}$ on $\mathcal{X}_{a,b,n,s}$ as in Subsection \ref{subsection X}. To simplify the notation, we will denote $P_{(\alpha_i,0,0)}$ by $P_{i}$, for each $i=1,\ldots, q/p^b$, and $\mathbf{P}_{m+1}=(P_{\infty}, P_1, \ldots , P_m)$. Define $\mathcal{D}:=(z)_0 = \sum_{j=1}^{q^2}\sum_{i=1}^{q/p^b} P_{(\alpha_i,\beta_j,0)}$ as already introduced in Equation (\ref{div z}). 

We observe that with the conditions on $a,b$ and $n$ given in Section \ref{subsection X}, if $m \leq q/p^b$, then $m \leq q^{2n}$. This allows us to use the results given in Sections \ref{WS} and \ref{GWS} where this hypothesis was required.

\begin{proposition} \label{discrepancia m upla}
Let $(i,j)\in ([0,q]\times[1,M]\cap \ZZ^2) \backslash (q,M)$ and $1 \leq m \leq q/p^b$. Then the divisor $\displaystyle A= \dfrac{1}{p^b}[(q^2 - mp^b)(q+1)M -iqM -jq^3]P_{\infty} + \sum_{\ell=1}^{m}(iM+j)P_{\ell}$ is a discrepancy to every pair of distinct points in $\{P_{\infty}, P_1, \ldots,P_{m}\}$.
\end{proposition}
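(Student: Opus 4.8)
The plan is to verify the two defining conditions of a discrepancy (Definition \ref{def:discrepancy}) directly for each pair of points in $\{P_\infty,P_1,\dots,P_m\}$. Write $A=a_\infty P_\infty+\sum_{\ell=1}^{m}(iM+j)P_\ell$, where $a_\infty=\frac{1}{p^b}[(q^2-mp^b)(q+1)M-iqM-jq^3]$. Since the points $P_1,\dots,P_m$ play interchangeable roles in $A$ and in all the functions below (permuting the indices merely permutes the linear factors $x-\alpha_\ell$), it suffices to treat one pair of the form $(P_\infty,P_1)$ and, when $m\geq2$, one pair of the form $(P_1,P_2)$; every other pair is handled verbatim after relabeling. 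Throughout I would use the factorization $t(x)=c\,y^{q+1}=\prod_{k=1}^{q/p^b}(x-\alpha_k)$ — valid because $t$ is a separable additive polynomial of degree $q/p^b$ whose roots are exactly the abscissas $\alpha_k$ — together with the divisors (\ref{div x}), (\ref{div y}), (\ref{div z}) and the identity $y^{q^2}-y=z^{M}$, which also gives $y^{q^2-1}-1=z^{M}/y$.

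For the first condition, $\mathcal{L}(A)\neq\mathcal{L}(A-Q)$, the decisive observation is that a single function does the job for every $Q$ simultaneously. I would take
\[
f:=\frac{(y^{q^2-1}-1)\,\prod_{k=m+1}^{q/p^b}(x-\alpha_k)}{y^{i}z^{j}} .
\]
A local analysis shows: the numerator factor $y^{q^2-1}-1=z^{M}/y$ has a zero of order $M$ at every point of $\mathcal{D}=(z)_0$ with nonzero $y$-coordinate and is a unit at each $P_k$, so it precisely cancels the poles that $1/(y^{i}z^{j})$ would create off the points $P_k$; the factors $x-\alpha_k$ with $k>m$ cancel the poles at $P_{m+1},\dots,P_{q/p^b}$ while staying units at $P_1,\dots,P_m$. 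Counting valuations then gives $v_{P_\ell}(f)=-(iM+j)$ for $\ell=1,\dots,m$, no poles away from $\{P_\infty,P_1,\dots,P_m\}$, and $-v_{P_\infty}(f)=a_\infty$ (the $P_\infty$-orders cancel exactly). Hence $f$ attains the maximal pole order allowed by $A$ at each of the $m+1$ points, so $f\in\mathcal{L}(A)\setminus\mathcal{L}(A-Q)$ for every $Q$, proving the first condition for all pairs at once.

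For the second condition, $\mathcal{L}(A-P)=\mathcal{L}(A-P-Q)$, I would invoke Noether's Reduction Lemma \ref{lemma noether} with the canonical divisor $K$ of Lemma \ref{K canonico} (supported at $P_\infty$). Applying the lemma to the divisor $A-P-Q$ and the point $Q$ — so that adding $Q$ back recovers $A-P$ — reduces the claim to checking $\ell(A-P-Q)>0$ and $\ell(K-A+P)\neq\ell(K-A+P+Q)$. The latter means producing a function in $\mathcal{L}(K-A+P+Q)\setminus\mathcal{L}(K-A+P)$, and again an explicit function works. For $(P,Q)=(P_\infty,P_1)$ I would use $h_1:=y^{i}z^{j-1}\prod_{\ell=2}^{m}(x-\alpha_\ell)$: it has a zero of order exactly $iM+j-1$ at $P_1$, zeros of order $\geq iM+j$ at $P_2,\dots,P_m$, is regular off $\{P_\infty,P_1,\dots,P_m\}$, and its $P_\infty$-pole order equals the $P_\infty$-coefficient of $K-A+P_\infty+P_1$ exactly, so $h_1$ lies in $\mathcal{L}(K-A+P_\infty+P_1)$ but not in $\mathcal{L}(K-A+P_\infty)$. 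For $(P,Q)=(P_1,P_2)$ the analogous $h_2:=y^{i}z^{j-1}\prod_{\ell=3}^{m}(x-\alpha_\ell)$ plays the same role. In both cases the lemma then yields the desired equality of Riemann--Roch spaces.

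The main obstacle is bookkeeping rather than conceptual: one must track $f$, $h_1$, and $h_2$ at every point of $\mathcal{D}$ — especially the points with nonzero $y$-coordinate, where $z$ vanishes but $y$ does not — to be sure no pole or zero is created outside $\{P_\infty,P_1,\dots,P_m\}$, and one must confirm that the $P_\infty$-orders come out exactly as claimed, uniformly over the entire range $(i,j)\in([0,q]\times[1,M]\cap\mathbb{Z}^2)\setminus(q,M)$ and $1\leq m\leq q/p^b$. Here the exclusion of $(i,j)=(q,M)$ is exactly what keeps $iM+j<(q+1)M=v_{P_\ell}(x-\alpha_\ell)$, which is what makes a single linear factor over-cancel each finite pole as required. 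The only other delicate point is the auxiliary positivity $\ell(A-P-Q)>0$ at the extreme values of the parameters, which I would settle either by the constant function (when $A-P-Q$ is effective) or by a Riemann--Roch degree estimate.
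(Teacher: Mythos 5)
Your proposal is correct and follows essentially the same route as the paper's proof: after substituting $y^{q^2-1}-1=z^{M}/y$ and $\prod_{k=1}^{q/p^b}(x-\alpha_k)=t(x)=c\,y^{q+1}$, your function $f$ is exactly the paper's witness $z^{M-j}y^{q-i}/\prod_{k=1}^{m}(x-\alpha_k)$ up to the constant $c$, and your second step uses the same canonical divisor $K$ from Lemma \ref{K canonico}, the same application of Noether's Reduction Lemma, and the identical witness functions $h_1=y^{i}z^{j-1}\prod_{\ell=2}^{m}(x-\alpha_\ell)$ and $h_2=y^{i}z^{j-1}\prod_{\ell=3}^{m}(x-\alpha_\ell)$. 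The only difference is that you explicitly flag the hypothesis $\ell(A-P-Q)>0$ required by Lemma \ref{lemma noether}, a point the paper applies silently.
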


\begin{proof}
First, to see that $\mathcal{L}(A) \neq \mathcal{L}(A-P)$ for all $P \in \{ P_{\infty},P_1, \ldots,P_{m} \}$ we just note that the principal divisor of the function $f:=\dfrac{z^{M-j}y^{q-i}}{\prod_{k=1}^{m} (x-\alpha_k)}$ is
$$-\dfrac{1}{p^b}[(q^2 - mp^b)(q+1)M -iqM -jq^3]P_{\infty}-\sum_{\ell=1}^{m} (iM+j)P_{\ell} + \sum_{\ell=m+1}^{q/p^b} [(q+1)M-iM-j]P_{\ell} + (M-j) \widetilde{\mathcal{D}},$$
where $\widetilde{\mathcal{D}} = \mathcal{D} - (P_1 + \cdots + P_{q/p^b})$.

Hence $f \in \mathcal{L}(A) \setminus \mathcal{L}(A-P)$ for all $P \in \{ P_{\infty},P_1, \ldots,P_{m} \}$.

Now, we must prove that $\mathcal{L}(A-P) = \mathcal{L}(A-P-Q)$ and $\mathcal{L}(A-Q) = \mathcal{L}(A-Q-P)$, for all $P,Q\in\{P_{\infty},P_1,\ldots,P_{m}\}$. By Lemma \ref{lemma noether}, it suffices to prove that $\mathcal{L}(K-A+P)\neq\mathcal{L}(K-A+P+Q)$, where $K$ is a canonical divisor. Let $$K=\left(\dfrac{1}{p^b} [(q^2-p^b)(q+1)M - q^3] - 1 \right)P_{\infty}$$ be the canonical divisor given in Lemma \ref{K canonico}. Then, we have that
\begin{small}
$$
\begin{array}{ll}
K+P+Q-A& \displaystyle = \left(\dfrac{1}{p^b} [(q^2-p^b)(q+1)M - q^3] - 1 \right)P_{\infty} +P+Q \\
 & \\
& \displaystyle - \dfrac{1}{p^b}[(q^2 - mp^b)(q+1)M -iqM -jq^3]P_{\infty} - \sum_{\ell=1}^{m}(iM+j)P_{\ell} \\
         & \displaystyle = [(m-1)(q+1)M + i\dfrac{q}{p^b}M + (j-1)\dfrac{q^3}{p^b} - 1] P_\infty + P + Q- \sum_{\ell=1}^{m}(iM+j)P_{\ell}.
\end{array}
$$
\end{small}

If $P_{\infty} \in \{ P, Q \}$ then we can assume without loss of generality that $P=P_{\infty}$ and $Q=P_1$. Thus,
$$
K+P+Q-A \displaystyle = [(m-1)(q+1)M + i\dfrac{q}{p^b}M + (j-1)\dfrac{q^3}{p^b}] P_\infty  - (iM+j-1)P_1 - \sum_{\ell=2}^{m}(iM+j)P_{\ell}
$$
 and we have that $z^{j-1} y^i (x-\alpha_2) \cdots (x-\alpha_{m}) \in \mathcal{L}(K+P+Q-A)\setminus \mathcal{L}(K+Q-A)$.

If $P_{\infty} \not \in \{ P, Q \}$, we can suppose that $P=P_1$ and $Q=P_2.$ In this case, we have
$$
K+P+Q-A \displaystyle = [(m-1)(q+1)M + i\dfrac{q}{p^b}M + (j-1)\dfrac{q^3}{p^b} - 1] P_\infty - (iM+j-1)P_1  - (iM+j-1)P_2 - \sum_{\ell=3}^{m}(iM+j)P_{\ell}
$$
and $z^{j-1} y^i (x-\alpha_3) \cdots (x-\alpha_{m}) \in \mathcal{L}(K+P+Q-A)\setminus \mathcal{L}(K+Q-A)$.

So, $\mathcal{L}(K+P+Q-A) \neq \mathcal{L}(K+Q-A)$ and, by Lemma \ref{lemma noether}, we can conclude that $\mathcal{L}(A-P) = \mathcal{L}(A-P-Q).$ 

Therefore, $A$ is a discrepancy with respect to $P$ and $Q$ for any two distinct points $P,Q\in\{P_{\infty},P_1,\ldots,P_{m}\}$.
\end{proof}

Our next aim is to compute explicitly the absolute and relative maximal elements in generalized Weierstrass semigroups of $\mathcal{X}_{a,b,n,s}$ at $(m+1)$-tuples of rational points, where $1\leq m\leq q/p^b$. To do so, we observe that from Section \ref{subsection X} and Section \ref{GWS} one has
\begin{equation} \label{Cm}
\cC_{m+1}=\{\negbeta\in \ZZ^{m+1} \ : \ 0\leq \beta_i< (q+1)M \ \mbox{for } i=2,\ldots,m+1\},
\end{equation}

and $\Theta_{m+1}$ is generated by the $(m+1)$-tuples
\begin{equation}\label{eta_i}
\negeta^i=(0,\ldots,0,-(q+1)M,\underbrace{(q+1)M}_{i\text{-th entry}},0,\ldots,0)\in \ZZ^{m+1} \quad \mbox{for} \quad i=2,\ldots,m+1.
\end{equation}

For $(i,j)\in ([0,q]\times[1,M]\cap \ZZ^2) \backslash (q,M)$ define

\begin{equation} \label{alphas}
\negalpha^{i,j,m}:=\left(\dfrac{1}{p^b}[(q^2 - mp^b)(q+1)M -iqM -jq^3],iM+j,\ldots,iM+j\right)\in \ZZ^{m+1}.
\end{equation}

Note that when writing $\negalpha^{i,j,m} = (\alpha^{i,j,m}_0, \alpha^{i,j,m}_1, \ldots , \alpha^{i,j,m}_{m}) $ one has  $\alpha^{i,j,m}_0=\alpha^{i,j,m-1}_0-(q+1)M$, for all $m \geq 2$.

\begin{lemma} \label{lemma ast}
Let $\negalpha^{i,j,m} = (\alpha^{i,j,m}_0, \alpha^{i,j,m}_1, \ldots , \alpha^{i,j,m}_{m}) $ be as above with $m \geq 2$. Then
$$
\nabla_{1}^{m}(\alpha^{i,j,m}_0,\alpha^{i,j,m}_1-1+(q+1)M,\ldots,\alpha^{i,j,m}_{m-1}-1+(q+1)M,\alpha^{i,j,m}_m -1)=\emptyset .
$$
\end{lemma}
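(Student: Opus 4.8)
The plan is to convert the combinatorial statement $\nabla_1^m(\negbeta)=\emptyset$ into a single Riemann--Roch dimension equality and then verify it by exhibiting one explicit function. Write $\negbeta=(\alpha^{i,j,m}_0,\alpha^{i,j,m}_1-1+(q+1)M,\ldots,\alpha^{i,j,m}_{m-1}-1+(q+1)M,\alpha^{i,j,m}_m-1)$ for the $(m+1)$-tuple appearing in the statement, and read the subscript $1$ as the coordinate attached to $P_\infty$ (the first point of $\mathbf{P}_{m+1}$). By Proposition \ref{prop 2.1.3 da tese}(2) applied at $P_\infty$, one has $\nabla_1^m(\negbeta)=\emptyset$ if and only if $\ell(D_{\negbeta})=\ell(D_{\negbeta}-P_\infty)$, where, using $\alpha^{i,j,m}_\ell=iM+j$ for $\ell\ge 1$,
$$D_{\negbeta}=\alpha^{i,j,m}_0\,P_\infty+\sum_{\ell=1}^{m-1}\bigl[(i+q+1)M+j-1\bigr]P_\ell+(iM+j-1)P_m.$$
Thus the entire lemma reduces to this one dimension identity.

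To establish $\ell(D_{\negbeta})=\ell(D_{\negbeta}-P_\infty)$, I would apply Noether's Reduction Lemma (Lemma \ref{lemma noether}) with $D=D_{\negbeta}-P_\infty$ and $P=P_\infty$, following the same pattern as the proof of Proposition \ref{discrepancia m upla}. Since $K-(D_{\negbeta}-P_\infty)-P_\infty=K-D_{\negbeta}$ and $K-(D_{\negbeta}-P_\infty)=K-D_{\negbeta}+P_\infty$, it suffices to find a function in $\mathcal{L}(K-D_{\negbeta}+P_\infty)$ but not in $\mathcal{L}(K-D_{\negbeta})$. Taking the canonical divisor $K=\bigl(\tfrac{1}{p^b}[(q^2-p^b)(q+1)M-q^3]-1\bigr)P_\infty$ from Lemma \ref{K canonico} and subtracting $D_{\negbeta}$, a direct computation shows that the coefficient of $P_\infty$ in $K-D_{\negbeta}$ equals $c:=(m-1)(q+1)M+\tfrac{iq}{p^b}M+\tfrac{(j-1)q^3}{p^b}-1$, the coefficients at $P_1,\ldots,P_{m-1}$ are $-[(i+q+1)M+j-1]$, and the coefficient at $P_m$ is $-(iM+j-1)$.

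The key step is guessing the witnessing function; by analogy with Proposition \ref{discrepancia m upla} I would try $g=z^{\,j-1}y^{\,i}\prod_{\ell=1}^{m-1}(x-\alpha_\ell)$. Using the principal divisors (\ref{div x}), (\ref{div y}) and (\ref{div z}), one computes that $g$ has a pole only at $P_\infty$, of order exactly $c+1$, a zero of order exactly $(i+q+1)M+j-1$ at each $P_\ell$ with $1\le\ell\le m-1$, and a zero of order exactly $iM+j-1$ at $P_m$ (since the factor $(x-\alpha_m)$ is absent), and is regular elsewhere. Comparing with the coefficients above, $g$ satisfies every condition imposed by $K-D_{\negbeta}+P_\infty$ with equality on the support, while at $P_\infty$ its pole order $c+1$ exceeds the bound $c$ coming from $K-D_{\negbeta}$; hence $g\in\mathcal{L}(K-D_{\negbeta}+P_\infty)\setminus\mathcal{L}(K-D_{\negbeta})$, and Lemma \ref{lemma noether} gives $\ell(D_{\negbeta})=\ell(D_{\negbeta}-P_\infty)$, which is the desired identity.

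The main obstacle is the exact divisor bookkeeping in the last paragraph: the argument works only because the pole order of $g$ at $P_\infty$ is \emph{precisely} one more than the bound $c$ and because its vanishing orders at $P_1,\ldots,P_m$ match the prescribed coefficients on the nose, so the arithmetic involving $(q+1)M$, $\tfrac{q}{p^b}M$ and $\tfrac{q^3}{p^b}$ must be carried out carefully. A secondary point is the hypothesis $\ell(D_{\negbeta}-P_\infty)>0$ of Lemma \ref{lemma noether}; this can be treated exactly as the analogous tacit step in Proposition \ref{discrepancia m upla}, or bypassed entirely by observing through Riemann--Roch that $\ell(D_{\negbeta})-\ell(D_{\negbeta}-P_\infty)=1-[\ell(K-D_{\negbeta}+P_\infty)-\ell(K-D_{\negbeta})]$, so that the single strict inequality $\ell(K-D_{\negbeta}+P_\infty)>\ell(K-D_{\negbeta})$ witnessed by $g$ already forces the equality (the bracketed quantity lying in $\{0,1\}$).
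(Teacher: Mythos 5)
Your proposal is correct and follows essentially the same route as the paper's own proof: the same reduction of $\nabla_1^m(\cdot)=\emptyset$ to the dimension identity $\ell(D_{\negbeta})=\ell(D_{\negbeta}-P_\infty)$ via Proposition \ref{prop 2.1.3 da tese}(2), the same application of Noether's Reduction Lemma with the canonical divisor of Lemma \ref{K canonico}, and the identical witness function $z^{j-1}y^{i}(x-\alpha_1)\cdots(x-\alpha_{m-1})$, whose pole and zero orders you compute correctly. Your closing Riemann--Roch remark, which bypasses the hypothesis $\ell(D)>0$ of Lemma \ref{lemma noether}, is a small refinement of a point the paper leaves tacit.
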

\begin{proof}
Let $\bar{\negalpha} = (\alpha^{i,j,m}_0, \alpha^{i,j,m}_1+(q+1)M,\ldots,\alpha^{i,j,m}_{m-1}+(q+1)M,\alpha^{i,j,m}_m)$ and $D_{\bar{\negalpha}} = \alpha^{i,j,m}_0 P_{\infty} + (\alpha^{i,j,m}_1+(q+1)M) P_1 + \ldots + (\alpha^{i,j,m}_{m-1}+(q+1)M) P_{m-1} + \alpha^{i,j,m}_m P_{m}$. In view of  Proposition \ref{prop 2.1.3 da tese}(2), it is sufficient to prove that
$$
\ell\left(D_{\bar{\negalpha}}-(P_1 + \ldots + P_{m})\right)=\ell\left(D_{\bar{\negalpha}} - (P_{\infty} +P_1 + \ldots + P_{m}) \right).
$$ 
Equivalently, by Lemma \ref{lemma noether}, let us show that
$$\mathcal{L} (K-D_{\bar{\negalpha}} + P_1 + \ldots + P_{m})\neq \mathcal{L} (K-D_{\bar{\negalpha}} + P_{\infty} +P_1 + \ldots + P_{m})$$
where $K$ as before denotes the canonical divisor $K = \left(\dfrac{1}{p^b} [(q^2-p^b)(q+1)M - q^3] - 1 \right)P_{\infty}$ given in Lemma \ref{K canonico}. 

Hence
$$
\begin{array}{rl}
K-D_{\bar{\negalpha}} + P_{\infty} +P_1 + \ldots + P_{m} & = \left( (m-1)(q+1)M + i \dfrac{qM}{p^b} + (j-1)\dfrac{q^3}{p^b} \right)P_{\infty}\\
 & \displaystyle -\sum_{k=1}^{m-1}[iM + (j-1)+(q+1)M]P_k - (iM+j-1)P_m.
\end{array}
$$

Let $f:= z^{j-1} y^i (x-\alpha_1) \ldots (x-\alpha_{m-1})$. Thus
$$
\begin{array}{rl}
(f) & = (j-1)P_1 + \cdots + (j-1)P_{q/p^b} - (j-1)\dfrac{q^3}{p^b} P_{\infty}\\
    & + iMP_1 + \cdots + iMP_{q/p^b} - i\dfrac{q}{p^b}M P_{\infty}  \\
    & + (q+1)M P_1 + \cdots + (q+1)M P_{m-1} - (m-1)(q+1)MP_{\infty}\\
    & = \displaystyle \sum_{k=1}^{m-1} [iM + (j-1) + (q+1)M]P_{k} + (iM + j-1)P_m \\
    & + \displaystyle \sum_{k=m+1}^{q/p^b}(iM+j-1)P_k - \left( (m-1)(q+1)M + i \dfrac{qM}{p^b} + (j-1)\dfrac{q^3}{p^b} \right)P_{\infty}. 
\end{array}
$$

So, $f \in \mathcal{L} (K-D_{\bar{\negalpha}} + P_{\infty} +P_1 + \ldots + P_{m}) \setminus \mathcal{L} (K-D_{\bar{\negalpha}} + P_1 + \ldots + P_{m})$, and the proof is complete.
\end{proof}

We are now ready to give an explicit description for the absolute and the relative maximal elements of $\hH(\negP_{m+1})$  in the region $\cC_{m+1}$ given in Equation (\ref{Cm}).

\begin{theorem} Let $1\leq m\leq q/p^b$, $\negP_{m+1} = (P_{\infty}, P_1, \ldots , P_{m})$, and $\hS_{m+1}$ be as above. 
Then
$$\hGamma(\negP_{m+1})\cap \cC_{m+1}=\hS_{m+1} \cup \{{\bf 0}\},$$
where $\hS_{m+1}$ is the set of all $\negalpha^{i,j,m}$ as in (\ref{alphas}).
\end{theorem}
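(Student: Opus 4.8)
The plan is to prove the two inclusions separately, and the forward one $\hS_{m+1}\cup\{{\bf 0}\}\subseteq\hGamma(\negP_{m+1})\cap\cC_{m+1}$ is almost immediate from the material already in place. For a valid pair $(i,j)$ the divisor $D_{\negalpha^{i,j,m}}$ of (\ref{alphas}) is exactly the divisor $A$ of Proposition \ref{discrepancia m upla}, so $D_{\negalpha^{i,j,m}}$ is a discrepancy with respect to every pair of distinct points of $\negP_{m+1}$, and Proposition \ref{prop equiv discrepancia} then gives $\negalpha^{i,j,m}\in\hGamma(\negP_{m+1})$. Membership in $\cC_{m+1}$ reduces to $0\le iM+j<(q+1)M$, which holds because $(i,j)\mapsto iM+j$ maps $([0,q]\times[1,M]\cap\ZZ^2)\setminus\{(q,M)\}$ bijectively onto $\{1,\dots,(q+1)M-1\}$; this bijection will also be used below. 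For ${\bf 0}$ I would note that the constants realize ${\bf 0}\in\hH(\negP_{m+1})$, that ${\bf 0}\in\cC_{m+1}$ trivially, and that $\ell(D_{{\bf 0}})=1=0+1=\ell(D_{{\bf 0}}-P_\infty-\sum_{\ell=1}^{m}P_\ell)+1$, so ${\bf 0}\in\hGamma(\negP_{m+1})$ by Proposition \ref{absmax}(iv).

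For the reverse inclusion I would take $\negalpha=(\alpha_0,\alpha_1,\dots,\alpha_m)\in\hGamma(\negP_{m+1})\cap\cC_{m+1}$ and first isolate the part that is clean: \emph{once the last $m$ coordinates are known to be equal}, everything else follows formally. Indeed, suppose $\alpha_1=\cdots=\alpha_m=:v$. If $v\ge 1$ I write $v=iM+j$ via the bijection above and compare $\negalpha$ with $\negalpha^{i,j,m}\in\hGamma(\negP_{m+1})$, which shares with $\negalpha$ all the coordinates attached to $P_1,\dots,P_m$. Since both elements are absolute maximal, Proposition \ref{absmax}(ii) makes each of their $\nabla$-sets at the $P_1$-coordinate a singleton; as $\negalpha$ and $\negalpha^{i,j,m}$ agree in that coordinate and are comparable in all the others (they differ only at $P_\infty$), whichever has the smaller $P_\infty$-coordinate lies in the singleton $\nabla$-set of the other, forcing $\negalpha=\negalpha^{i,j,m}$. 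The case $v=0$ is identical with $\negalpha^{i,j,m}$ replaced by ${\bf 0}$ and yields $\negalpha={\bf 0}$. Thus the whole reverse inclusion collapses to the single claim that every element of $\hGamma(\negP_{m+1})\cap\cC_{m+1}$ has $\alpha_1=\cdots=\alpha_m$.

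Proving this equality of the last $m$ coordinates, i.e. excluding ``asymmetric'' absolute maximal elements from the box $\cC_{m+1}$, is the step I expect to be the real obstacle. My approach would be to push on the discrepancy characterization of Proposition \ref{prop equiv discrepancia}: for indices $a\neq b$ in $\{1,\dots,m\}$ the condition that $D_{\negalpha}$ be a discrepancy with respect to $P_a,P_b$ says that $\mathcal{L}(D_{\negalpha})\neq\mathcal{L}(D_{\negalpha}-P_b)$ while $\mathcal{L}(D_{\negalpha}-P_a)=\mathcal{L}(D_{\negalpha}-P_a-P_b)$, so that a pole of order exactly $\alpha_b$ at $P_b$ is attainable inside the box but cannot be attained once the pole allowed at $P_a$ is dropped below $\alpha_a$; running the same condition with $a$ and $b$ interchanged produces a symmetric coupling that I would convert into $\alpha_a=\alpha_b$. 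The concrete tool for this is the family $y^{q-i}z^{M-j}/(x-\alpha_\ell)$, which by the divisors in Equations (\ref{div x})--(\ref{div z}) lies in $R_{\negP_{m+1}}$ and has a pole of order $iM+j$ at $P_\ell$ and no pole at the other $P_k$, thereby pinning down exactly which pole vectors occur in $\cC_{m+1}$. As in Proposition \ref{discrepancia m upla} and Lemma \ref{lemma ast}, I would translate the needed equalities $\mathcal{L}(D_{\negalpha}-P_a)=\mathcal{L}(D_{\negalpha}-P_a-P_b)$ into nonvanishing statements about $\mathcal{L}(K-D_{\negalpha}+\cdots)$ through Noether's Reduction Lemma (Lemma \ref{lemma noether}) and the explicit canonical divisor of Lemma \ref{K canonico}, checking them by exhibiting explicit functions. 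The delicate bookkeeping in this symmetry step is where the difficulty lies; the bijection and the $\nabla$-comparison of the previous paragraph are then routine.
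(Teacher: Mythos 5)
Your forward inclusion is exactly the paper's (Propositions \ref{prop equiv discrepancia} and \ref{discrepancia m upla}), and your reduction of the reverse inclusion is formally sound: if $\alpha_1=\cdots=\alpha_m=v$, then writing $v=iM+j$ (or taking $v=0$) and comparing with $\negalpha^{i,j,m}$ (or $\mathbf{0}$) via the singleton property of Proposition \ref{absmax}(ii) does force $\negalpha=\negalpha^{i,j,m}$ (or $\negalpha=\mathbf{0}$). The problem is that the statement you reduce everything to --- every element of $\hGamma(\negP_{m+1})\cap\cC_{m+1}$ has its last $m$ coordinates equal --- carries essentially the whole content of the theorem, and you do not prove it; you only announce that you ``would'' extract $\alpha_a=\alpha_b$ from the pairwise discrepancy conditions by delicate bookkeeping with explicit functions. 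That is a genuine gap: the discrepancy conditions at pairs $(P_a,P_b)$ are statements about Riemann--Roch spaces, and converting them into the coordinate equality requires knowing which pole vectors are \emph{not} realized by functions on $\mathcal{X}_{a,b,n,s}$ --- precisely the nonexistence statements the theorem is meant to settle. Exhibiting functions such as $y^{q-i}z^{M-j}/(x-\alpha_\ell)$ only certifies existence (one inclusion); your sketch offers no mechanism for the required emptiness results beyond a vague appeal to Lemma \ref{lemma noether}, and Lemma \ref{lemma ast} by itself only rules out non-comparability between elements already known to share the coordinate $\alpha_0=\alpha^{i,j,m}_0$.

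The paper closes this gap by a different device that your plan never touches: induction on the number of points $m$. Given $\negalpha\in\hGamma(\negP_{m+1})\cap\cC_{m+1}$, it sets $\tilde t=\min\{t\in\NN \ : \ \alpha_0+t(q+1)M\geq 0\}$ and shows $1\leq \tilde t\leq m$. When $1\leq\tilde t\leq m-1$, Theorem \ref{maximals} together with a least-upper-bound against $\mathbf{0}$ projects $\negalpha$ into $\hH(\negP_{m+1-\tilde t})$, where the induction hypothesis supplies an element $\negalpha^{i,j,m+1-\tilde t}$ whose translate $\negalpha^{i,j,m}$ lies in $\nabla_2^{m}(\negalpha)$, forcing equality by absolute maximality. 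When $\tilde t=m$, the integer $\alpha_0+m(q+1)M$ is shown to lie in the telescopic semigroup $H(P_\infty)=\langle \frac{q}{p^b}M,\frac{q^3}{p^b},(q+1)M \rangle$, which pins down $\alpha_0=\alpha^{i,j,m}_0$, and only then does Lemma \ref{lemma ast} enter, to exclude non-comparability of $\negalpha$ with $\negalpha^{i,j,m}$. Nothing in your proposal plays the role of this induction, of the lub projection, or of the one-point semigroup computation, so the proposal cannot be completed as written without essentially rediscovering the paper's argument (or finding a genuinely new proof of your symmetry claim).
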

\begin{proof}

It follows from Propositions \ref{prop equiv discrepancia} and \ref{discrepancia m upla} that $\hS_{m+1} \subseteq \hGamma(\negP_{m+1})\cap \cC_{m+1}$. To prove the reverse inclusion we will use induction on $m$.

First, for $m=1$ if $(\lambda, \mu)\in \hGamma(\negP_2)\cap \cC_2$, then $\mu < (q+1)M$ and so $\mu = iM+j$, for some $(i,j)\in ([0,q]\times[1,M]\cap \ZZ^2) \backslash (q,M)$. Now, by Propositions \ref{prop equiv discrepancia} and \ref{discrepancia m upla}, we have that $\left(\dfrac{1}{p^b}[(q^2 - mp^b)(q+1)M -iqM -jq^3],iM+j\right) \in \hGamma(\negP_2)\cap \cC_2$ and the results follows from Proposition \ref{absmax}.

 Let now $m\geq 2$ and suppose that $\hGamma(\mathbf{P}_{k+1})\cap \cC_{k+1}=\hS_{k+1}$ for $k=1,\ldots,m-1$. Recall that, when writing $\negalpha^{i,j,m} = (\alpha^{i,j,m}_0, \alpha^{i,j,m}_1, \ldots , \alpha^{i,j,m}_{m}) $, we have  $\alpha^{i,j,m}_0=\alpha^{i,j,m-1}_0-(q+1)M$. Given ${\negalpha=(\alpha_0, \alpha_1,\ldots,\alpha_m)\in \hGamma(\mathbf{P}_{m+1})\cap \cC_{m+1}}$, let $D_{\negalpha}=\alpha_0 P_{\infty} + \alpha_1P_1 \ldots + \alpha_m P_{m}$ and let us consider ${\tilde{t}:=\min\{t\in \NN \ : \ \alpha_0+t(q+1)M\geq 0\}}$. Observe that, by Proposition \ref{absmax}, $\ell(D_{\negalpha})\geq 1$, and thus $|\negalpha| = \deg(D_{\negalpha}) \geq 0$. In addition, $\negalpha \in \cC_{m+1}$ and hence $\alpha_0\geq -\sum_{j=1}^m \alpha_j\geq -m((q+1)M-1)$. So $\alpha_0+m (q+1)M\geq 0$, which shows that $1\leq \tilde{t} \leq m$. We distinguish two cases.

$\bullet$ If $1\leq \tilde{t} \leq m-1$, we have $2\leq m+1-\tilde{t}\leq m$. Without loss of generality, assume that $\alpha_1=\min_{1\leq k\leq m}\{\alpha_k\}$. Hence Theorem \ref{maximals} and Equation \eqref{eta_i} lead to
\begin{equation*}
\negalpha'=(\alpha_0+\tilde{t}(q+1)M,\alpha_1,\ldots,\alpha_{m-\tilde{t}},\alpha_{m-\tilde{t}+1}-(q+1)M,\ldots,\alpha_m-(q+1)M)\in \hH(\mathbf{P}_{m+1}).
\end{equation*}
Since $0\leq \alpha_k< (q+1)M$ for $k=1,\ldots, m$, we get
$$\negbeta=\mbox{lub}(\negalpha',\textbf{0})=(\alpha_0+\tilde{t}(q+1)M,\alpha_1,\ldots,\alpha_{m-\tilde{t}},0,\ldots,0)\in \hH(\mathbf{P}_{m+1-\tilde{t}}).$$
In particular, $\nabla_2^{m+1-\tilde{t}}(\negbeta)\neq \emptyset$. From the induction hypothesis, there exists $\negalpha^{i,j,m+1-\tilde{t}}\in \hS_{m+1-\tilde{t}}$ such that $\negalpha^{i,j,m+1-\tilde{t}}\in\nabla_2^{m+1-\tilde{t}}(\negbeta)$. As $\alpha_0^{i,j,m}=\alpha_0^{i,j,m-\tilde{t}}-\tilde{t}(q+1)M$ and $\alpha_1=iM+j\leq \alpha_k$ for $k=2,\ldots,m$, it follows that $\negalpha^{i,j,m}\in \nabla_2^{m}(\negalpha)$. Therefore, $\negalpha=\negalpha^{i,j,m}$ by Proposition \ref{absmax}, since $\negalpha\in \hGamma(\mathbf{P}_m)$.

$\bullet$ If $\tilde{t}=m$, by using a similar argument, we obtain $\alpha_0+m(q+1)M\in H(P_{\infty})=\langle \frac{q}{p^b}M,\frac{q^3}{p^b},(q+1)M \rangle$.  As $\alpha_0+m(q+1)M< (q+1)M$, we have either $\alpha_0+m(q+1)M=0$ or $\alpha_0+m(q+1)M = \lambda \dfrac{qM}{p^b} + \eta \dfrac{q^3}{p^b}$, for some integers $\lambda, \eta \geq 0$. 

If $\alpha_0+m(q+1)M=0$ then $\alpha_0=-m(q+1)M$, which is a contradiction since $\alpha_0\geq -m((q+1)M-1)>-m(q+1)M$. 

In the other case, we can write $\alpha_0+m(q+1)M= (q-i) \dfrac{qM}{p^b} + (M-j)\dfrac{q^3}{p^b} =\dfrac{1}{p^b}[q^2(q+1)M -iqM -jq^3]$ for some $(i,j)\in ([0,q]\times[1,M]\cap \ZZ^2) \backslash (q,M)$. So, we have that $\alpha_0=\dfrac{1}{p^b}[(q^2 - mp^b)(q+1)M -iqM -jq^3]=\alpha^{i,j,m}_0$. Suppose that $\negalpha$ and $\negalpha^{i,j,m}$ are not comparable in the partial order $\leq$. Without loss of generality, we may assume that $\alpha_m<\alpha_m^{i,j,m}$.
In this way, since $\alpha^{i,j,m}_k \geq 1$ for all $k=1,\ldots,m-1$, we have that $\negalpha\in \nabla_1(\alpha^{i,j,m}_0,\alpha^{i,j,m}_1 - 1+(q+1)M,\ldots,\alpha^{i,j,m}_{m-1}-1+(q+1)M,\alpha^{i,j,m}_m)$. However, by Lemma \ref{lemma ast}, we have
$$ \qquad \nabla_1(\alpha^{i,j,m}_0,\alpha^{i,j,m}_1-1+(q+1)M,\ldots,\alpha^{i,j,m}_{m-1}-1+(q+1)M,\alpha^{i,j,m}_m)=\emptyset.$$
Thus $\negalpha$ and $\negalpha^{i,j,m}$ are comparable with respect to $\leq$, which leads to $\negalpha=\negalpha^{i,j,m}$ by their absolute maximality. 

Therefore, $\hGamma(\mathbf{P}_{m+1})\cap \cC_{m+1}\subseteq \hS_{m+1}$ and the proof is complete.
\end{proof}

For $i,j,k_2, \ldots,k_{m+1} \in \mathbb{Z}$, define
\begin{equation} \label{beta classico}
\begin{array}{ll}
\neggamma_{k_2,\ldots,k_{m+1}}^{i,j,m}:= & \left(\dfrac{1}{p^b}[(q^2 - mp^b - p^b(\sum_{\ell=2}^{m+1} k_\ell))(q+1)M -iqM -jq^3],\right . \\
& \left . k_2(q+1)M + iM+j,\ldots,k_{m+1}(q+1)M + iM+j \right) \in \ZZ^{m+1}.
\end{array}
\end{equation}

The set of absolute maximal elements in the generalized Weierstrass semigroup $\hH(\negP_{m+1})$ is presented in the following corollary.

\begin{corollary} \label{corolario gamma}
$$
\hGamma(\negP_{m+1})= \{ \neggamma_{k_2,\ldots,k_{m+1}}^{i,j,m} \in \ZZ^{m+1} \mbox{ ; }  (i,j)\in ([0,q]\times[1,M]\cap \ZZ^2) \backslash (q,M) \mbox{ and }k_2, \ldots,k_{m+1} \in \mathbb{Z} \}.
$$
\end{corollary}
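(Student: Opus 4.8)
The plan is to combine the theorem just proved, which computes $\hGamma(\negP_{m+1}) \cap \cC_{m+1}$, with the structural result of Theorem \ref{maximals} that expresses the full set $\hGamma(\negP_{m+1})$ as a translate of its intersection with the fundamental region $\cC_{m+1}$ by the lattice $\Theta_{m+1}$. Concretely, the theorem gives
$$
\hGamma(\negP_{m+1}) = \left(\hGamma(\negP_{m+1}) \cap \cC_{m+1}\right) + \Theta_{m+1},
$$
and by the preceding theorem the first factor equals $\hS_{m+1} \cup \{\negcero\}$, where $\hS_{m+1}$ consists of the elements $\negalpha^{i,j,m}$ from \eqref{alphas}. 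So the task reduces to the purely bookkeeping step of adding a general element of $\Theta_{m+1}$ to each $\negalpha^{i,j,m}$ (and to $\negcero$) and checking that the resulting vectors are exactly the $\neggamma_{k_2,\ldots,k_{m+1}}^{i,j,m}$ listed in \eqref{beta classico}.

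First I would write a general element of $\Theta_{m+1}$ using the generators \eqref{eta_i}: for integers $b_2,\ldots,b_{m+1}$,
$$
\sum_{\ell=2}^{m+1} b_\ell\, \negeta^\ell = \left(-(q+1)M\sum_{\ell=2}^{m+1} b_\ell,\; b_2(q+1)M,\ldots,b_{m+1}(q+1)M\right),
$$
since each $\negeta^\ell$ contributes $-(q+1)M$ to the $P_\infty$-coordinate and $+(q+1)M$ to the $\ell$-th coordinate. Then I would add this vector coordinatewise to $\negalpha^{i,j,m}$ from \eqref{alphas}, relabel $b_\ell = k_\ell$, and read off that the $P_\infty$-coordinate becomes
$$
\frac{1}{p^b}\left[(q^2 - mp^b)(q+1)M - iqM - jq^3\right] - (q+1)M\sum_{\ell=2}^{m+1} k_\ell,
$$
which is precisely the first coordinate of $\neggamma_{k_2,\ldots,k_{m+1}}^{i,j,m}$ once the $-(q+1)M\sum k_\ell$ term is absorbed into the bracket as $-p^b(\sum_{\ell} k_\ell)(q+1)M/p^b$; the $\ell$-th coordinate becomes $k_\ell(q+1)M + iM + j$, matching \eqref{beta classico} exactly. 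This identification shows $\hS_{m+1} + \Theta_{m+1}$ equals the claimed set.

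The only genuine point requiring care is the role of the extra element $\negcero$ appearing in the preceding theorem. I would note that $\negcero + \Theta_{m+1}$ consists of the vectors $\left(-(q+1)M\sum k_\ell,\, k_2(q+1)M,\ldots,k_{m+1}(q+1)M\right)$, and I would argue that these are already subsumed among the $\neggamma_{k_2,\ldots,k_{m+1}}^{i,j,m}$ — indeed, one should check whether the boundary parameter value corresponding to $(i,j)=(q,M)$, which is the point excluded from the index set, formally recovers $\negcero$ (since at $(i,j)=(q,M)$ the inner data $iM+j = (q+1)M$ degenerates into a pure multiple of $(q+1)M$ and the $P_\infty$-coordinate collapses). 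The cleanest way to handle this is to verify directly that the translate of $\negcero$ is an absolute maximal element and is already present in the listed family for an appropriate choice of $(i,j)$ and shifted $k_\ell$, so that the final set is exactly $\{\neggamma_{k_2,\ldots,k_{m+1}}^{i,j,m}\}$ with $(i,j)$ ranging over $([0,q]\times[1,M]\cap\ZZ^2)\setminus(q,M)$ and $k_\ell \in \ZZ$. I expect this reconciliation of $\negcero$ with the indexed family — making sure no maximal element is lost or double-counted at the excluded boundary — to be the main (and essentially the only) obstacle; the rest is the routine coordinate arithmetic described above.
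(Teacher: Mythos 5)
Your overall route is the paper's own: invoke Theorem \ref{maximals} to write $\hGamma(\negP_{m+1})=(\hGamma(\negP_{m+1})\cap\cC_{m+1})+\Theta_{m+1}$, substitute $\hS_{m+1}\cup\{\negcero\}$ from the preceding theorem, and finish by coordinate arithmetic; your identification of $\hS_{m+1}+\Theta_{m+1}$ with the family $\neggamma_{k_2,\ldots,k_{m+1}}^{i,j,m}$ is correct. One harmless slip: your displayed formula for $\sum_{\ell}b_\ell\negeta^\ell$ is not an identity, since for $\ell\geq 3$ the generator $\negeta^\ell$ has its entry $-(q+1)M$ in coordinate $\ell-1$ (the point $P_{\ell-2}$), not in the $P_\infty$-coordinate. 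The set you wrote down is nevertheless exactly $\Theta_{m+1}$, because the substitution $k_\ell=b_\ell-b_{\ell+1}$ for $2\leq \ell\leq m$, $k_{m+1}=b_{m+1}$, is a bijection of $\ZZ^{m}$; this reparametrization is precisely what the paper carries out explicitly.

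The genuine gap is the verification you defer at the end: it fails, and in the opposite direction from what you predict. Every vector of $\negcero+\Theta_{m+1}$ has all of its last $m$ coordinates divisible by $(q+1)M$, whereas the last coordinates of $\neggamma_{k_2,\ldots,k_{m+1}}^{i,j,m}$ are $k_\ell(q+1)M+iM+j$ with $1\leq iM+j\leq (q+1)M$; membership therefore forces $iM+j=(q+1)M$, i.e.\ $(i,j)=(q,M)$, which is exactly the excluded index. (The first coordinates are then consistent: one computes $\neggamma_{k_2,\ldots,k_{m+1}}^{q,M,m}=\bigl(-(m+\textstyle\sum_{\ell}k_\ell)(q+1)M,(k_2+1)(q+1)M,\ldots,(k_{m+1}+1)(q+1)M\bigr)$, which lies in $\Theta_{m+1}$.) So $\negcero+\Theta_{m+1}$ is disjoint from the listed family rather than subsumed in it, and the decomposition actually yields $\hGamma(\negP_{m+1})=\{\neggamma_{k_2,\ldots,k_{m+1}}^{i,j,m}\}\cup\Theta_{m+1}$, equivalently the same family with the index $(i,j)=(q,M)$ allowed. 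Carried out honestly, your check thus contradicts the stated equality instead of confirming it. Note that the paper's proof has the same hole — it never accounts for the element $\negcero$ and simply asserts that the result follows — so you were right to flag this as the one delicate point; but a correct write-up must either adjoin $\Theta_{m+1}$ to the right-hand side or, equivalently, enlarge the index set to all of $[0,q]\times[1,M]\cap\ZZ^2$.
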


\begin{proof}
From Theorem \ref{maximals} we have that 
\begin{equation} \label{eq gamma m 1}
\hGamma(\negP_{m+1})=(\hGamma(\negP_{m+1}) \cap \cC_{m+1} )+\Theta_{m+1},
\end{equation}

where $ \Theta_{m+1}=\Theta_{m+1}(\negP_{m+1}) = \{ b_2 \negeta^2+ \ldots + b_{m} \negeta^{m}\in \ZZ^{m+1} \ : b_i \in \ZZ  \mbox{ for } i=2,\ldots,m+1\}$ and $\negeta^i=(0,\ldots,0,-(q+1)M,\underbrace{(q+1)M}_{i\text{-th entry}},0,\ldots,0)\in \ZZ^{m+1} \quad \mbox{for} \quad i=2,\ldots,m+1,$ as in (\ref{eta_i}). Hence
$$
\begin{array}{rl}
 \Theta_{m+1} = & \{ (-b_2 (q+1)M, (b_2-b_3)(q+1)M , (b_3-b_4)(q+1)M, \ldots ,\\
                           & (b_m-b_{m+1})(q+1)M, b_{m+1}(q+1)M) \in \ZZ^{m+1} \mbox{ ; } b_i \in \ZZ  \mbox{ for } i=2,\ldots,m+1 \}\\
                        = & \{ ( - \sum_{\ell = 2}^{m+1} k_\ell (q+1)M , k_2 (q+1)M, k_3(q+1)M, \ldots , k_{m+1}(q+1)M) \mbox{ ; } \\
                           &  k_\ell \in \ZZ  \mbox{ for } \ell=2,\ldots,m+1  \}.
 \end{array}
$$

By the previous theorem we have that $\hGamma(\negP_{m+1})\cap \cC_{m+1}=\hS_{m+1} \cup \{{\bf 0}\}$, where $\hS_{m+1}$ is the set of all $\negalpha^{i,j,m}$ as in (\ref{alphas}). So, the result follows from Equality (\ref{eq gamma m 1}) above.
\end{proof}

In the following corollary we determine the minimal generating set of the Weierstrass semigroup $H(\negP_{m+1})$. 

\begin{corollary} \label{classical WS}
Let $i,j,k_2, \ldots, k_{m+1}$ and $\neggamma_{k_2,\ldots,k_{m+1}}^{i,j,m}$ be as above. Then, $\Gamma(\negP_{m+1})$ is the set of all $\neggamma_{k_2,\ldots,k_{m+1}}^{i,j,m} \in \mathbb{N}_{0}^{m+1}$.
\end{corollary}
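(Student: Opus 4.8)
The plan is to deduce this corollary directly from Corollary \ref{corolario gamma} together with the general relationship between the absolute maximal set of the generalized Weierstrass semigroup and the minimal generating set of the classical Weierstrass semigroup. Specifically, Remark \ref{obs gamma} records the identity $\Gamma(\negP_{m+1}) = \hGamma(\negP_{m+1}) \cap \NN_0^{m+1}$, valid here since $m+1 \leq q/p^b + 1 \leq q^{2n}$ so that the hypothesis $q^{2n} \geq m+1$ needed throughout Sections \ref{WS} and \ref{GWS} holds. Thus it suffices to intersect the set described in Corollary \ref{corolario gamma} with $\NN_0^{m+1}$.

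First I would recall that Corollary \ref{corolario gamma} gives
$$
\hGamma(\negP_{m+1}) = \{ \neggamma_{k_2,\ldots,k_{m+1}}^{i,j,m} : (i,j)\in ([0,q]\times[1,M]\cap \ZZ^2)\setminus(q,M),\ k_2,\ldots,k_{m+1}\in\ZZ \}.
$$
Intersecting with $\NN_0^{m+1}$ then amounts to selecting exactly those tuples $\neggamma_{k_2,\ldots,k_{m+1}}^{i,j,m}$ all of whose $m+1$ coordinates are nonnegative. Since every element of $\hGamma(\negP_{m+1})$ has the displayed form for a unique choice of parameters (the entries $2,\ldots,m+1$ determine $k_\ell$ and then $iM+j$ via the Euclidean-type decomposition $\beta_\ell = k_\ell(q+1)M + (iM+j)$ with $0\leq iM+j<(q+1)M$, and the parameters then determine the zeroth coordinate), the intersection is precisely the subfamily lying in $\NN_0^{m+1}$. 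This gives the statement verbatim.

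The main step, and the only one requiring more than a citation, is verifying that $\Gamma(\negP_{m+1}) = \hGamma(\negP_{m+1}) \cap \NN_0^{m+1}$ truly yields the minimal generating set in the sense of \cite{gretchen1}, rather than merely a set of nonnegative absolute maximal elements. This is exactly the content of Remark \ref{obs gamma}, so I would invoke it directly; the substance has already been established when $\hGamma(\negP_{m+1})$ was computed. The anticipated obstacle is therefore essentially bookkeeping: one must be careful that the parametrization in \eqref{beta classico} is genuinely a normal form, i.e. that distinct parameter tuples give distinct elements, so that no element is double-counted and so that the description ``the set of all $\neggamma_{k_2,\ldots,k_{m+1}}^{i,j,m} \in \NN_0^{m+1}$'' is unambiguous. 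Since $0 \le iM+j < (q+1)M$ by the range of $(i,j)$, the pair $(i,j)$ and each $k_\ell$ are uniquely recoverable from the coordinates of $\neggamma_{k_2,\ldots,k_{m+1}}^{i,j,m}$, so uniqueness holds and the proof reduces to citing Remark \ref{obs gamma} and Corollary \ref{corolario gamma}.
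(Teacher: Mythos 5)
Your proposal is correct and matches the paper's own proof, which likewise deduces the corollary in one line from Remark \ref{obs gamma} (i.e.\ $\Gamma(\negP_{m+1}) = \hGamma(\negP_{m+1}) \cap \NN_0^{m+1}$) together with the description of $\hGamma(\negP_{m+1})$ in Corollary \ref{corolario gamma}. The extra verification of uniqueness of the parametrization is harmless bookkeeping the paper leaves implicit (it is in the spirit of Lemma \ref{delta lambda}), but it is not needed for the statement as phrased.
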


\begin{proof}
The result follows directly from Remark \ref{obs gamma} and Corollary \ref{corolario gamma}.
\end{proof}

\bigskip

\begin{theorem} Let $1\leq m\leq q/p^b$. Let
$$\negbeta^{0,0,m}=\left((m-1)(q+1)M,0,\ldots,0\right)\in \ZZ^{m+1}$$
and for $(i,j)\in ([0,q]\times[1,M]\cap \ZZ^2) \backslash (q,M)$, let
$$\negbeta^{i,j,m}:=\left(\dfrac{1}{p^b}[(q^2 - p^b)(q+1)M -iqM -jq^3],iM+j,\ldots,iM+j\right)\in \ZZ^{m+1}.$$
Then
$$\hLambda(\negP_{m+1})\cap \cC_{m+1}=\{\negbeta^{i,j,m} \ : (i,j)\in ([0,q]\times[1,M]\cap \ZZ^2) \backslash (q,M)\}\cup \{\negbeta^{0,0,m}\}.$$
\end{theorem}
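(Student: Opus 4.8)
The plan is to characterize $\hLambda(\negP_{m+1}) \cap \cC_{m+1}$ by using the criterion for relative maximality given in Proposition~\ref{relmax}, in complete analogy with how the absolute maximal elements were computed via the discrepancy criterion of Proposition~\ref{prop equiv discrepancia}. The relative maximal elements sit ``one step below'' the absolute maximal ones: note that for $(i,j) \neq (q,M)$ the candidate $\negbeta^{i,j,m}$ differs from $\negalpha^{i,j,m}$ only in the first coordinate, where $(q^2-p^b)$ replaces $(q^2-mp^b)$, i.e.\ the $P_\infty$-multiplicity is increased by $(m-1)(q+1)M$. This reflects the fact that a relative maximal element is obtained from an absolute maximal element by sliding mass from the finite points back onto $P_\infty$ until the $\lub$-generation property fails at all but one coordinate. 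So first I would verify the forward inclusion: each listed $\negbeta^{i,j,m}$ (and $\negbeta^{0,0,m}$) lies in $\hLambda(\negP_{m+1}) \cap \cC_{m+1}$.

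\textbf{Forward inclusion.} For this I would use the equivalence in Proposition~\ref{relmax}, in the form of item (5) (or item (3)/(4) via discrepancies): it suffices to exhibit an index $i_0$ such that $\nabla_{i_0}(\negbeta^{i,j,m}) = \emptyset$ while $\nabla_{i_0,j}(\negbeta^{i,j,m}) \neq \emptyset$ for every $j \neq i_0$. The natural choice is $i_0 = 1$ (the $P_\infty$-coordinate). The emptiness $\nabla_1(\negbeta^{i,j,m}) = \emptyset$ should follow from a Riemann--Roch computation against the canonical divisor $K$ of Lemma~\ref{K canonico} together with Lemma~\ref{lemma noether}, exactly mirroring the discrepancy computations in Proposition~\ref{discrepancia m upla} and Lemma~\ref{lemma ast}; here the explicit function realizing membership in the relevant Riemann--Roch space will again be a monomial of the shape $z^{j-1}y^i \prod_k (x-\alpha_k)$ with suitably adjusted exponents. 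The non-emptiness of the pairwise sets $\nabla_{1,\ell}$ should be witnessed by the absolute maximal element $\negalpha^{i,j,m}$ (or an appropriate translate), since $\negbeta^{i,j,m}$ agrees with $\negalpha^{i,j,m}$ in the first coordinate and dominates it in the others. The degenerate element $\negbeta^{0,0,m}$ is handled separately but by the same principle, using that its $P_\infty$-multiplicity $(m-1)(q+1)M$ is the value forcing the finite coordinates to be $0$.

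\textbf{Reverse inclusion.} For the harder direction I would proceed by induction on $m$, paralleling the structure of the proof of the previous theorem. Given $\negalpha = (\alpha_0,\alpha_1,\ldots,\alpha_m) \in \hLambda(\negP_{m+1}) \cap \cC_{m+1}$, I would introduce $\tilde t = \min\{t \in \NN : \alpha_0 + t(q+1)M \geq 0\}$, argue $1 \leq \tilde t \leq m$ using the degree bound coming from $\ell(D_\negalpha) = \ell(D_{\negalpha - \mathbf 1}) + (m-1)$ in Proposition~\ref{relmax}(2), and then split into the cases $\tilde t < m$ and $\tilde t = m$. In the first case a $\lub$-projection combined with Theorem~\ref{maximals} and the induction hypothesis should identify $\negalpha$ with one of the listed $\negbeta^{i,j,m}$; in the case $\tilde t = m$ one reduces $\alpha_0 + (m-1)(q+1)M$ to an element of $H(P_\infty) = \langle \tfrac{q}{p^b}M, \tfrac{q^3}{p^b}, (q+1)M\rangle$ and extracts the pair $(i,j)$ from its telescopic representation, with the boundary value producing $\negbeta^{0,0,m}$.

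\textbf{Main obstacle.} The delicate point is not the Riemann--Roch bookkeeping, which is routine once the canonical divisor and the generating functions are fixed, but rather pinning down the exact threshold in the first coordinate that distinguishes relative from absolute maximality: one must show that $(q^2 - p^b)$, and not any nearby value, is forced. This requires carefully controlling the dimension jump $\ell(D_\negalpha) - \ell(D_{\negalpha - \mathbf 1}) = m-1$ (as opposed to the jump $+1$ characterizing absolute maximals in Proposition~\ref{absmax}(iv)), and verifying that the emptiness of $\nabla_1$ together with the non-emptiness of all $\nabla_{1,\ell}$ holds precisely at this value. I expect to deploy Lemma~\ref{lemma ast} and the Noether reduction lemma once more to rule out the nearby candidates, which is where the bulk of the comparison argument will concentrate.
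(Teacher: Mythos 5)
Your forward inclusion is essentially the paper's argument: there, relative maximality of the candidates is checked via Proposition \ref{relmax} by showing that $D_{\negbeta^{i,j,m}-\mathbf{1}+\mathbf{e}_1+\mathbf{e}_k}$ is a discrepancy with respect to $P_\infty$ and $P_k$ for every $k$, using the explicit functions $z^{M-j}y^{q-i}/(x-\alpha_k)$ and $z^{j-1}y^{i}$ (and, for $\negbeta^{0,0,m}$, the functions $\prod_{\ell\neq k}(x-\alpha_\ell)$ and $z^{M-1}y^{q}/\prod_{\ell}(x-\alpha_\ell)$), together with the canonical divisor of Lemma \ref{K canonico} and Noether's reduction lemma. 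One small correction to your sketch: $\negalpha^{i,j,m}$ itself does not witness $\nabla_{1,\ell}(\negbeta^{i,j,m})\neq\emptyset$, because its first coordinate differs from that of $\negbeta^{i,j,m}$ by $(m-1)(q+1)M$; you genuinely need a $\Theta_{m+1}$-translate of it, or equivalently the pole tuple of $z^{M-j}y^{q-i}/(x-\alpha_\ell)$.

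The genuine gap is in your reverse inclusion, and it is exactly the point you flag as the ``main obstacle'': nothing in your plan actually forces the first coordinate to be $\frac{1}{p^b}[(q^2-p^b)(q+1)M-iqM-jq^3]$. The paper never proves this directly and never inducts on $m$. Instead it argues structurally and then counts: for $\negbeta\in\hLambda(\negP_{m+1})\cap\cC_{m+1}$, each set $\nabla_k^{m+1}(\negbeta)$, $k\geq 2$, contains an absolute maximal element, which by the preceding theorem is some $\negalpha^{i_k,j_k,m}$; comparing these pairwise forces a single pair $(i,j)$ with $\beta_2=\cdots=\beta_{m+1}=iM+j$, and $\beta_1>\alpha_1^{i,j,m}$ since $\negbeta\neq\negalpha^{i,j,m}$. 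This attaches to each $\negbeta$ a unique element of $\hGamma(\negP_{m+1})\cap\cC_{m+1}$, whence $\#(\hLambda(\negP_{m+1})\cap\cC_{m+1})\leq\#(\hGamma(\negP_{m+1})\cap\cC_{m+1})=\#R_{m+1}$; combined with the forward inclusion $R_{m+1}\subseteq\hLambda(\negP_{m+1})\cap\cC_{m+1}$, equality of these finite sets follows, and the value $(q^2-p^b)$ is obtained for free. Your inductive scheme cannot substitute for this: the proof of the $\hGamma$-theorem closes by invoking Proposition \ref{absmax}, i.e.\ $\nabla_k^{m+1}(\negalpha)=\{\negalpha\}$ for absolute maximal $\negalpha$, a rigidity that relative maximal elements do not possess, and relative maximality is not preserved under the lub-projection onto fewer points, so the induction hypothesis on $\hLambda(\negP_{k+1})\cap\cC_{k+1}$ cannot be applied to the projected element. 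Finally, ``ruling out nearby candidates'' would require showing, for every competing value of $\beta_1$, that the corresponding tuple is not in $\hH(\negP_{m+1})$, or that some $\nabla_J$ with $\#J\geq 2$ is empty, or that some $\nabla_{\{k\}}$ is nonempty; neither Lemma \ref{lemma ast} nor Noether's reduction lemma is set up to deliver any of these statements, so the bulk of the work you defer to them has no supporting argument. The missing idea is the cardinality comparison with $\hGamma(\negP_{m+1})\cap\cC_{m+1}$.
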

\begin{proof} If $m=1$, then the result follows from the previous theorem. Therefore, we can suppose $m\geq 2$. Let $R_{m+1}=\{\negbeta^{i,j,m} \ : (i,j)\in ([0,q]\times[1,M]\cap \ZZ^2) \backslash (q,M)\}\cup \{\negbeta^{0,0,m}\}$. Let us prove that $R_{m+1}\subseteq \hLambda(\negP_{m+1})\cap \cC_{m+1}$. Note that $R_{m+1} \subseteq C_{m+1}$, since $(i,j)\in ([0,q]\times[1,M]$. To prove that $R_{m+1}\subseteq \hLambda(\negP_{m+1})$, by Proposition \ref{relmax} (3), it is sufficient to prove that the divisor $D=D_{\negbeta^{i,j,m}-\textbf{1}+\textbf{e}_1+\textbf{e}_k}$, with $(i,j)\in (([0,q]\times[1,M]\cap \ZZ^2) \backslash (q,M)) \cup \{ (0,0) \}$, is a discrepancy with respect to $P_{\infty}$ and $P_k$ for any $k=1,\ldots,m$.

First, suppose $(i,j)\neq (0,0)$. Thus 
$$
\displaystyle D=\dfrac{1}{p^b}[(q^2 - p^b)(q+1)M -iqM -jq^3]P_{\infty} + (iM+j)P_k + \sum_{\ell=1 \atop \ell \neq k}^{m} (iM+j-1)P_{\ell}.
$$

Let $\mathcal{D}:=\sum_{j=1}^{q^2}\sum_{i=1}^{q/p^b} P_{(\alpha_i,\beta_j,0)}$ (as above). Since, 
$$
\begin{array}{ll}
\left( \dfrac{z^{M-j} y^{q-i}}{x-\alpha_k} \right) = & -\dfrac{1}{p^b}[(q^2 - p^b)(q+1)M -iqM -jq^3]P_{\infty} - (iM+j)P_k\\
& \\
 & \displaystyle + (M-j) (\mathcal{D} - P_k) + (q-i)M \sum_{\ell=1 \atop \ell \neq k}^{m} P_{\ell},
\end{array}
$$
we have that $\dfrac{z^{M-j} y^{q-i}}{x-\alpha_k} \in \mathcal{L}(D) \setminus \mathcal{L}(D - P_k)$. 

Now, we must prove that $\mathcal{L}(D-P_{\infty}) = \mathcal{L}(D - P_{\infty} - P_{k})$. By Lemma \ref{lemma noether}, it suffices to prove that $\mathcal{L}(K-D+P_{\infty}+P_k) \neq \mathcal{L}(K-D + P_{\infty})$, where $K$ is the canonical divisor $K=\left(\dfrac{1}{p^b} [(q^2-p^b)(q+1)M - q^3] - 1 \right)P_{\infty}$. Note that
$$
K-D+P_{\infty}+P_k = \dfrac{1}{p^b}(iqM + (j-1)q^3)P_{\infty} - \sum_{\ell=1 }^{m} (iM+j-1)P_{\ell}.
$$
Thus, $z^{j-1}y^i \in \mathcal{L}(K-D+P_{\infty}+P_k) \setminus \mathcal{L}(K-D + P_{\infty})$, and it follows that $D$ is discrepancy with respect to $P_{\infty}$ and $P_k$ for any $k=1,\ldots,m$.

Therefore, to conclude that $R_{m+1}\subseteq \hLambda(\negP_{m+1})\cap \cC_{m+1}$, it remains to verify that $\negbeta^{0,0,m} \in \hLambda(\negP_{m+1})$. 

For $(i,j)=(0,0)$, we have $\displaystyle D= (m-1)(q+1)MP_{\infty}- \sum_{\ell=1 \atop \ell \neq k}^{m}P_{\ell}$. Again, we will prove that $\mathcal{L}(D)\neq \mathcal{L}(D-P_k)$ and $\mathcal{L}(K-D+P_{\infty})\neq\mathcal{L}(K-D+P_{\infty}+P_k)$.

Note that $\displaystyle \prod_{\ell=1 \atop \ell \neq k}^{m}(x - \alpha_{\ell}) \in \mathcal{L}(D) \setminus \mathcal{L}(D-P_{k})$. Moreover, since
$$
\displaystyle K-D+P_k+P_{\infty} = \dfrac{1}{p^b}[(q^2 - mp^b)(q+1)M -q^3]P_{\infty} + \sum_{\ell=1}^{m}P_{\ell},
$$
we get $\displaystyle \dfrac{z^{M-1} y^q}{(x-\alpha_1) \ldots (x-\alpha_{m})} \in \mathcal{L}(K-D+P_k+P_{\infty}) \setminus \mathcal{L}(K-D+P_{\infty})$.

Therefore, we conclude that $R_{m+1}\subseteq \hLambda(\negP_{m+1})\cap \cC_{m+1}$.

Now, let $\negbeta\in \hLambda(\negP_{m+1})\cap \cC_{m+1}$. Since $\negbeta\in \hLambda(\negP_{m+1})$, from definition we have that $\nabla_k^{m+1}(\negbeta)\neq \emptyset$ for any $k\in I$. So, there exists an absolute maximal element $\negalpha^{i,j,m}\in \nabla_2^{m+1}(\negbeta)$, where $\negalpha^{i,j,m}$ is given in (\ref{alphas}). Thus $\beta_2=iM+j$ and $\beta_3\geq iM+j$. Similarly, there exists an absolute maximal element $\negalpha^{i',j',m}\in \nabla_3^{m+1}(\negbeta)$, and thus $\beta_3=i'M+j'$ and $\beta_2\geq i'M+j'$. Hence $iM+j=i'M+j'$, and therefore $(i,j)=(i',j')$. Proceeding in the same way with pairs of the remaining indexes, we conclude that there exists an absolute maximal element $\negalpha^{i,j,m}\in \bigcap_{k=2}^{m+1} \nabla_k^{m+1}(\negbeta)$ and, in particular, we can conclude that $\beta_k=iM+j$ for $k=2,\ldots,m+1$. As $\negbeta\in\hLambda(\negP_{m+1})$, it follows that $\negbeta\neq \negalpha^{i,j,m}$ and thus $\beta_1>\alpha^{i,j,m}_1$. Hence, for each $\negbeta\in \hLambda(\negP_{m+1})\cap \cC(\negP_{m+1})$, there exists a unique $\negalpha^{i,j,m}\in \hGamma(\negP_{m+1})\cap \cC_{m+1}$ such that $\negalpha^{i,j,m}\in \nabla_{I\backslash\{1\}}(\negbeta)$. Therefore, $\#(\hGamma(\negP_{m+1})\cap \cC_{m+1})\geq \#(\hLambda(\negP_{m+1})\cap \cC_{m+1})$. As $\#R_{m+1}=\#(\hGamma(\negP_{m+1})\cap \cC_{m+1})$ and $R_{m+1}\subseteq \hLambda(\negP_{m+1})\cap \cC_{m+1}$, we have $ \hLambda(\negP_{m+1})\cap \cC_{m+1}=R_{m+1}$, which proves the result.
\end{proof}

Now, for $i,j,k_2,\ldots,k_{m+1} \in \mathbb{Z}$, define
$$
\begin{array}{ll}
\negdelta_{k_2,\ldots,k_{m+1}}^{i,j} := & \left(\dfrac{1}{p^b}[(q^2 - p^b(1+\displaystyle \sum_{\ell=2}^{m+1} k_\ell))(q+1)M -iqM -jq^3], \right. \\
& \left. k_2(q+1)M + iM+j,\ldots,k_{m+1}(q+1)M + iM+j \right) \in \ZZ^{m+1},
 \end{array}
$$
and 
$$
\neglambda_{k_2, \ldots , k_{m+1}}:= \left((m-1 - \sum_{\ell=2}^{m+1} k_\ell)(q+1)M,k_2(q+1)M,\ldots,k_{m+1}(q+1)M\right) \in \ZZ^{m+1}.
$$

Note that, if $(i,j)\in ([0,q]\times[1,M]$, then $k(q+1)M +iM+j \geq 0$ if and only if $k\geq 0$.

From the previous result and Theorem \ref{maximals} we get the set of relative maximal elements in the generalized Weierstrass semigroup $\hH(\negP_{m+1})$.

\begin{corollary} \label{corolario lambda chapeu}
Let $\negdelta_{k_2,\ldots,k_{m+1}}^{i,j}$ and $\neglambda_{k_2, \ldots , k_{m+1}}$ be as above.
Then
$$
\begin{array}{ll}
\hLambda(\negP_{m+1})= & \{  \negdelta_{k_2,\ldots,k_{m+1}}^{i,j} \in \ZZ^{m+1} \mbox{ ; }  (i,j)\in ([0,q]\times[1,M]\cap \ZZ^2) \backslash (q,M) \mbox{ , }k_2, \ldots, k_{m+1} \in \ZZ \}\\
& \bigcup \{ \neglambda_{k_2, \ldots , k_{m+1}}  \in \ZZ^{m+1} \mbox{ ; }  k_2, \ldots, k_{m+1} \in \ZZ \}.
\end{array}
$$
\end{corollary}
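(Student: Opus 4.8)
The plan is to mirror the argument already used in the proof of Corollary \ref{corolario gamma}, simply replacing absolute maximals by relative maximals. The starting point is the relative-maximal half of Theorem \ref{maximals}, namely
$$
\hLambda(\negP_{m+1})=(\hLambda(\negP_{m+1})\cap \cC_{m+1})+\Theta_{m+1},
$$
which recovers the whole (infinite) set $\hLambda(\negP_{m+1})$ from the finitely many representatives inside $\cC_{m+1}$ together with the lattice $\Theta_{m+1}$. The representatives were computed in the theorem immediately preceding this corollary: they are the $\negbeta^{i,j,m}$ for $(i,j)\in ([0,q]\times[1,M]\cap \ZZ^2)\backslash(q,M)$, together with the extra element $\negbeta^{0,0,m}=((m-1)(q+1)M,0,\ldots,0)$.

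Next I would reuse the explicit description of $\Theta_{m+1}$ already derived in the proof of Corollary \ref{corolario gamma}, namely
$$
\Theta_{m+1}=\Big\{\Big(-\textstyle\sum_{\ell=2}^{m+1}k_\ell (q+1)M,\,k_2(q+1)M,\ldots,k_{m+1}(q+1)M\Big) \ :\ k_\ell\in\ZZ\Big\}.
$$
The key step is then a pair of coordinate-wise additions. Adding a generic element of $\Theta_{m+1}$ to $\negbeta^{i,j,m}$, the first coordinate becomes $\frac{1}{p^b}[(q^2-p^b)(q+1)M-iqM-jq^3]-\sum_{\ell=2}^{m+1}k_\ell(q+1)M$, which I would rewrite as $\frac{1}{p^b}[(q^2-p^b(1+\sum_{\ell=2}^{m+1}k_\ell))(q+1)M-iqM-jq^3]$, while the $\ell$-th coordinate becomes $k_\ell(q+1)M+iM+j$; this is exactly $\negdelta^{i,j}_{k_2,\ldots,k_{m+1}}$. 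Adding the same element to $\negbeta^{0,0,m}$ gives first coordinate $(m-1-\sum_{\ell=2}^{m+1}k_\ell)(q+1)M$ and $\ell$-th coordinate $k_\ell(q+1)M$, which is precisely $\neglambda_{k_2,\ldots,k_{m+1}}$.

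Since the tuple $(k_2,\ldots,k_{m+1})$ ranges over all of $\ZZ^{m}$, taking the union of these two families over all admissible $(i,j)$ and over all $(k_2,\ldots,k_{m+1})$ yields exactly the two sets in the statement, and the proof is complete. There is no genuine obstacle here: all the substantive content—determining $\hLambda(\negP_{m+1})\cap\cC_{m+1}$ and identifying the structure of $\Theta_{m+1}$—has already been carried out, so the corollary is a purely bookkeeping translation through the decomposition of Theorem \ref{maximals}. The only point I would check carefully is the arithmetic of the first coordinate, in particular the factor $p^b(1+\sum_{\ell} k_\ell)$ appearing in $\negdelta$ as opposed to the factor $p^b(m+\sum_{\ell} k_\ell)$ that distinguished $\neggamma$ in the absolute case of Corollary \ref{corolario gamma}; this shift from $m$ to $1$ is exactly the reflection of the change from $q^2-mp^b$ in $\negalpha^{i,j,m}$ to $q^2-p^b$ in $\negbeta^{i,j,m}$.
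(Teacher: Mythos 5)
Your proposal is correct and follows exactly the paper's route: the paper derives this corollary (without spelling out the computation) precisely from the preceding theorem giving $\hLambda(\negP_{m+1})\cap \cC_{m+1}$, the decomposition $\hLambda(\negP_{m+1})=(\hLambda(\negP_{m+1})\cap \cC_{m+1})+\Theta_{m+1}$ of Theorem \ref{maximals}, and the explicit form of $\Theta_{m+1}$ already computed in the proof of Corollary \ref{corolario gamma}. Your coordinate-wise additions, including the rewriting of the first coordinate as $\frac{1}{p^b}[(q^2-p^b(1+\sum_{\ell=2}^{m+1}k_\ell))(q+1)M-iqM-jq^3]$, check out, so the argument is complete.
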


By Theorem \ref{teo gaps} we have that the gaps and pure gaps of $H(\negP_{m+1})$ can be obtained from elements in the set $\Lambda(\negP_{m+1})$. Since $\Lambda(\negP_{m+1})=\hLambda(\negP_{m+1})\cap \NN_0^m$, using the previous result we have the following.

\begin{corollary} \label{corolario lambda}
Let $\negdelta_{k_2,\ldots,k_{m+1}}^{i,j}$ and $\neglambda_{k_2, \ldots , k_{m+1}}$ be as above and let $\tau_{(i,j)}:= \lfloor (q^3(M-j) + qM(q-i) - p^b (q+1)M)/p^b (q+1)M \rfloor$. Then
$$
\begin{array}{ll}
\Lambda(\negP_{m+1})= &  \{  \negdelta_{k_2,\ldots,k_{m+1}}^{i,j} \in \hLambda(\negP_{m+1}) \mbox{ ; }  k_2, \ldots, k_{m+1} \in \mathbb{N}_{0} \mbox{ with }  \sum_{\ell=2}^{m+1} k_{\ell} \leq \tau_{(i,j)}\}\\
& \bigcup \{ \neglambda_{{k_2}, \ldots , k_{m+1}} \in \hLambda(\negP_{m+1})  \mbox{ ; }k_2,\ldots,k_{m+1} \in \mathbb{N}_{0} \mbox{ with } \sum_{\ell=2}^{m+1} k_{\ell} \leq m-1 \}.
\end{array}
$$
\end{corollary}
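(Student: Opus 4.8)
The plan is to intersect the full description of the relative maximal set $\hLambda(\negP_{m+1})$ given in Corollary \ref{corolario lambda chapeu} with $\NN_0^{m+1}$, since by the remark preceding the statement we have $\Lambda(\negP_{m+1})=\hLambda(\negP_{m+1})\cap \NN_0^{m+1}$. Corollary \ref{corolario lambda chapeu} presents $\hLambda(\negP_{m+1})$ as a disjoint union of two families, the $\negdelta_{k_2,\ldots,k_{m+1}}^{i,j}$ and the $\neglambda_{k_2,\ldots,k_{m+1}}$, so I would treat each family separately and determine exactly which of its members have all coordinates nonnegative. The claim is that these nonnegativity conditions translate precisely into $k_2,\ldots,k_{m+1}\in\NN_0$ together with the stated bound on $\sum_{\ell=2}^{m+1}k_\ell$ in each case.

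First I would handle the coordinates $k_\ell(q+1)M+iM+j$ (for the $\negdelta$-family) and $k_\ell(q+1)M$ (for the $\neglambda$-family). Using the note recorded just before the statement, namely that for $(i,j)\in[0,q]\times[1,M]$ one has $k(q+1)M+iM+j\geq 0$ if and only if $k\geq 0$, each of the last $m$ coordinates being nonnegative is equivalent to $k_\ell\geq 0$; for the $\neglambda$-family the condition $k_\ell(q+1)M\geq 0$ is trivially $k_\ell\geq 0$. This disposes of all but the zeroth coordinate and pins down $k_2,\ldots,k_{m+1}\in\NN_0$.

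The remaining work is to extract the bound on the sum $\sum_{\ell=2}^{m+1}k_\ell$ from nonnegativity of the zeroth coordinate. For the $\negdelta$-family the zeroth coordinate is $\frac{1}{p^b}[(q^2-p^b(1+\sum k_\ell))(q+1)M-iqM-jq^3]$, and requiring this to be $\geq 0$ rearranges to $\sum_{\ell=2}^{m+1}k_\ell\leq (q^2(q+1)M-iqM-jq^3-p^b(q+1)M)/(p^b(q+1)M)$; since $\sum k_\ell$ is an integer, this is equivalent to $\sum k_\ell\leq \tau_{(i,j)}$ with $\tau_{(i,j)}$ the stated floor, which I would verify by rewriting $q^2(q+1)M=q^3\cdot M\cdot\frac{q+1}{q}$... more cleanly, by noting $q^2(q+1)M-iqM-jq^3 = q^3(M-j)+qM(q-i)+ (\text{terms combining to } q^3M+q^2M)$; I would simply expand and match with the numerator $q^3(M-j)+qM(q-i)-p^b(q+1)M$ appearing inside $\tau_{(i,j)}$. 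For the $\neglambda$-family the zeroth coordinate is $(m-1-\sum_{\ell=2}^{m+1}k_\ell)(q+1)M$, whose nonnegativity is immediately equivalent to $\sum_{\ell=2}^{m+1}k_\ell\leq m-1$.

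The main obstacle I anticipate is purely the bookkeeping in verifying that the rearranged inequality $\sum k_\ell \leq (q^2(q+1)M-iqM-jq^3)/(p^b(q+1)M) - 1$ agrees exactly with the floor $\tau_{(i,j)}=\lfloor(q^3(M-j)+qM(q-i)-p^b(q+1)M)/p^b(q+1)M\rfloor$; this requires checking the algebraic identity $q^2(q+1)M - iqM - jq^3 - p^b(q+1)M = q^3(M-j)+qM(q-i)-p^b(q+1)M$, which reduces to $q^2(q+1)M - iqM - jq^3 = q^3 M - jq^3 + q^2 M - iqM$, i.e. $q^2(q+1)M = q^3M+q^2M$, a true identity. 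Once this identity is confirmed the floor is forced because $\sum k_\ell$ is an integer and the denominator $p^b(q+1)M$ is positive, so the upper bound on the integer sum is exactly $\tau_{(i,j)}$. Combining the two families then yields precisely the asserted description of $\Lambda(\negP_{m+1})$.
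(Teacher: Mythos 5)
Your proposal is correct and follows essentially the same route as the paper's own proof: both intersect the description of $\hLambda(\negP_{m+1})$ from Corollary \ref{corolario lambda chapeu} with $\NN_0^{m+1}$, convert nonnegativity of the last $m$ coordinates into $k_\ell\in\NN_0$ via the remark preceding the statement, and convert nonnegativity of the first coordinate into the bounds $\sum_{\ell=2}^{m+1}k_\ell\leq\tau_{(i,j)}$ (using integrality to introduce the floor) and $\sum_{\ell=2}^{m+1}k_\ell\leq m-1$, respectively. The only difference is that you carry out explicitly the algebraic verification $q^2(q+1)M-iqM-jq^3=q^3(M-j)+qM(q-i)$, which the paper leaves implicit.
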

\begin{proof}
By definition, we have that $\Lambda(\negP_{m+1})=\hLambda(\negP_{m+1})\cap \NN_0^{m+1}$. Note that, $\negdelta_{k_2,\ldots,k_{m+1}}^{i,j} \in \NN_0^{m+1}$ if and only if $ k_2, \ldots, k_{m+1} \in \mathbb{N}_{0}^{m+1}$ and $ \sum_{\ell=2}^{m+1} k_{\ell} \leq \tau_{(i,j)}$. And $\neglambda_{k_2, \ldots , k_{m+1}} \in \mathbb{N}_{0}^{m+1}$ if and only if $\negdelta_{k_2,\ldots,k_{m+1}}^{i,j} \in \NN_0^m$ and  $\sum_{\ell=2}^{m+1} k_{\ell} \leq m-1$. So, the result follows from Corollary \ref{corolario lambda chapeu}. 
\end{proof}

\medskip

\begin{lemma}\label{delta lambda}
Let $k_2,\ldots,k_{m+1}, k_{2}',\ldots,k_{m+1}' \in \mathbb{N}_{0}$ and $(i,j), (i',j')\in ([0,q]\times[1,M]\cap \ZZ^2)\backslash (q,M)$. Then, we have that
\begin{enumerate}[\rm (1)]
\item $\negdelta_{k_2,\ldots,k_{m+1}}^{i,j} \neq \neglambda_{k_{2}', \ldots , k_{m+1}'}$; 

\item if $(k_2,\ldots,k_{m+1}) \neq (k_{2}',\ldots,k_{m+1}')$, then $\negdelta_{k_2,\ldots,k_{m+1}}^{i,j} \neq \negdelta_{k_{2}',\ldots,k_{m+1}'}^{i',j'}$, and $ \neglambda_{k_{2}, \ldots , k_{m+1}} \neq \neglambda_{{k_2}', \ldots , k_{m+1}'}$; and

\item if $(i,j) \neq (i',j')$, then $\negdelta_{k_2,\ldots,k_{m+1}}^{i,j} \neq \negdelta_{k_{2}',\ldots,k_{m+1}'}^{i',j'}$.
\end{enumerate}
\end{lemma}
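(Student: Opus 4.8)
The three assertions are all distinctness statements, and the plan is to reduce each of them to a single observation about one of the last $m$ coordinates (a $P_\ell$-entry) of the vectors involved, thereby avoiding the cumbersome first coordinate entirely. The key remark is that the assignment $(i,j)\mapsto iM+j$ is injective on $([0,q]\times[1,M]\cap\ZZ^2)\setminus\{(q,M)\}$ and takes values in the range $1\leq iM+j<(q+1)M$: for $j\in[1,M]$ the intervals $\{iM+1,\ldots,(i+1)M\}$ with $0\leq i\leq q$ partition $\{1,\ldots,(q+1)M\}$, and discarding the pair $(q,M)$ removes exactly the value $(q+1)M$. Consequently, in any coordinate of the form $k_\ell(q+1)M+iM+j$ with $k_\ell\in\mathbb{N}_{0}$, the number $iM+j$ is precisely the residue modulo $(q+1)M$ and $k_\ell$ is the corresponding quotient; both are recoverable from the coordinate value. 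This is the only nontrivial ingredient, and once it is in place every part follows from a one-line reduction modulo $(q+1)M$.

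For part (1) I would argue by contradiction. Assuming $\negdelta_{k_2,\ldots,k_{m+1}}^{i,j}=\neglambda_{k_{2}',\ldots,k_{m+1}'}$ and comparing the second coordinate (which exists since $m\geq 1$) gives $k_2(q+1)M+iM+j=k_2'(q+1)M$; reducing modulo $(q+1)M$ forces $iM+j\equiv 0\pmod{(q+1)M}$, contradicting $1\leq iM+j<(q+1)M$. Hence the two families are disjoint.

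For part (2) I again assume equality. For the two $\negdelta$'s, pick an index $\ell$ with $k_\ell\neq k_\ell'$; the $\ell$-th coordinate equation $k_\ell(q+1)M+iM+j=k_\ell'(q+1)M+i'M+j'$, reduced modulo $(q+1)M$, yields $iM+j=i'M+j'$ since both residues lie in $[1,(q+1)M)$, whence $k_\ell(q+1)M=k_\ell'(q+1)M$ and so $k_\ell=k_\ell'$, a contradiction. For the two $\neglambda$'s the same index gives $k_\ell(q+1)M=k_\ell'(q+1)M$ directly, again a contradiction. For part (3) I compare the second coordinate of $\negdelta_{k_2,\ldots,k_{m+1}}^{i,j}=\negdelta_{k_{2}',\ldots,k_{m+1}'}^{i',j'}$: modulo $(q+1)M$ this gives $iM+j=i'M+j'$, and the injectivity of $(i,j)\mapsto iM+j$ on the admissible domain forces $(i,j)=(i',j')$, contradicting the hypothesis.

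I do not expect a genuine obstacle here: no Riemann-Roch input or discrepancy machinery is needed, and the argument is purely the division-with-remainder bookkeeping of the first paragraph. The only care required is to select, in each case, a coordinate that isolates the relevant information (a coordinate where $k_\ell\neq k_\ell'$ for part (2), and any $P_\ell$-coordinate for parts (1) and (3)), and to keep track that the hypothesis $k_\ell\in\mathbb{N}_{0}$ is what guarantees the clean quotient-remainder separation modulo $(q+1)M$.
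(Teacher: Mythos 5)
Your proposal is correct and takes essentially the same route as the paper's proof: both compare the $P_\ell$-coordinates $k_\ell(q+1)M+iM+j$ and derive contradictions from the constraints $i\in[0,q]$, $j\in[1,M]$, $(i,j)\neq(q,M)$, which force $1\leq iM+j<(q+1)M$. The only (cosmetic) difference is packaging: the paper writes differences as explicit multiples of $M$ and argues case by case (its part (3) invoking part (2)), whereas you phrase everything as uniqueness of quotient and remainder modulo $(q+1)M$, which also makes your part (3) self-contained.
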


\begin{proof}
$(1)$. Suppose that $\negdelta_{k_2,\ldots,k_{m+1}}^{i,j} = \neglambda_{k_{2}', \ldots , k_{m+1}'}$ for some $k_2,\ldots,k_{m+1}, k_{2}',\ldots,k_{m+1}' \in \mathbb{N}_{0}$. Then, $k_2(q + 1)M + iM + j = k_{2}'(q+1)M$, and we  have that $j = [(k_{2}' - k_2)(q + 1)-i]M$, a contradiction, since $(i,j) \in ([0,q]\times[1,M]\cap \ZZ^2)\backslash (q,M)$.

$(2)$. Let $(k_2,\ldots,k_{m+1}) \neq (k_{2}',\ldots,k_{m+1}')$. So, there is $t \in \{ 2,\ldots,m+1 \}$ such that $k_t \neq k_{t}'$.  Suppose that $\negdelta_{k_2,\ldots,k_{m+1}}^{i,j} = \negdelta_{k_{2}',\ldots,k_{m+1}'}^{i',j'}$. Then, $ k_t(q+1)M + iM+j =  k_{t}'(q+1)M + i'M+j'$. Thus, we have $j'- j = [(k_t - k_{t}')(q+1)+i-i']M$, a contradiction, since $k_t - k_{t}' \neq 0$ and $(i,j), (i',j')\in ([0,q]\times[1,M]\cap \ZZ^2)\backslash (q,M)$.

It is clear that $ \neglambda_{k_{2}, \ldots , k_{m+1}} \neq \neglambda_{{k_2}', \ldots , k_{m+1}'}$, since $k_t(q + 1)M \neq k_{t}'(q+1)M$.

$(3)$. Suppose that $\negdelta_{k_2,\ldots,k_{m+1}}^{i,j} = \negdelta_{k_{2}',\ldots,k_{m+1}'}^{i',j'}$. By previous item, we have that $k_t = k_{t}'$, for all $t \in \{ 2,\ldots,m+1 \}$. So, we get $j'- j = (i-i')M$. Now, since $j,j' \in [1,M]$, we have $j=j'$  and $i=i'$. Therefore, we conclude that if $(i,j) \neq (i',j')$, then $\negdelta_{k_2,\ldots,k_{m+1}}^{i,j} \neq \negdelta_{k_{2}',\ldots,k_{m+1}'}^{i',j'}$.
\end{proof}

\begin{proposition}
For $(i,j) \in ([0,q]\times[1,M]\cap \ZZ^2)\backslash (q,M)$, let $\tau_{(i,j)}:= \lfloor (q^3(M-j) + qM(q-i) - p^b (q+1)M)/p^b (q+1)M \rfloor$ be as above. Then
$$
\displaystyle |\Lambda(\negP_{m+1})| = \dfrac{(2m-1)!}{(m-1)! m!} + \sum_{i=0}^{q} \sum_{\substack{j=1 \\ \tau_{(i,j)}\geq 0}}^{M} \dfrac{(\tau_{(i,j)} + m )!}{\tau_{(i,j)}! m!}.
$$
\end{proposition}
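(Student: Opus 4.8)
The plan is to combine the explicit description of $\Lambda(\negP_{m+1})$ provided by Corollary \ref{corolario lambda} with the separation results of Lemma \ref{delta lambda}, thereby reducing the statement to a standard lattice-point count. First I would invoke Corollary \ref{corolario lambda} to write $\Lambda(\negP_{m+1})$ as the union of the family $\mathcal{F}_1=\{\negdelta_{k_2,\ldots,k_{m+1}}^{i,j}\}$, indexed by the pairs $(i,j)\in([0,q]\times[1,M]\cap\ZZ^2)\setminus(q,M)$ together with nonnegative integers $k_2,\ldots,k_{m+1}$ satisfying $\sum_{\ell=2}^{m+1}k_\ell\le\tau_{(i,j)}$, and the family $\mathcal{F}_2=\{\neglambda_{k_2,\ldots,k_{m+1}}\}$, indexed by nonnegative integers $k_2,\ldots,k_{m+1}$ with $\sum_{\ell=2}^{m+1}k_\ell\le m-1$. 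Lemma \ref{delta lambda} then guarantees that this is a \emph{disjoint} union (part (1)), that inside $\mathcal{F}_1$ distinct index data $((i,j),(k_2,\ldots,k_{m+1}))$ yield distinct vectors (parts (2) and (3) together force injectivity in both the $(i,j)$ and the $(k_2,\ldots,k_{m+1})$ data), and that inside $\mathcal{F}_2$ distinct tuples $(k_2,\ldots,k_{m+1})$ yield distinct vectors (part (2)). Consequently $|\Lambda(\negP_{m+1})|=|\mathcal{F}_1|+|\mathcal{F}_2|$, and each cardinality equals the number of admissible index tuples.

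The combinatorial core is the elementary identity that the number of tuples $(k_2,\ldots,k_{m+1})\in\NNo^{m}$ with $\sum_{\ell=2}^{m+1}k_\ell\le N$ equals $\binom{N+m}{m}$ for any integer $N\ge 0$ (stars and bars, obtained by adjoining a slack variable). Applying this with $N=m-1$ to the family $\mathcal{F}_2$ gives $|\mathcal{F}_2|=\binom{2m-1}{m}=\frac{(2m-1)!}{(m-1)!\,m!}$, which is exactly the first summand of the claimed formula. For $\mathcal{F}_1$ I would count fibrewise over $(i,j)$: for a fixed pair the number of admissible tuples is $\binom{\tau_{(i,j)}+m}{m}=\frac{(\tau_{(i,j)}+m)!}{\tau_{(i,j)}!\,m!}$ when $\tau_{(i,j)}\ge 0$, and is zero when $\tau_{(i,j)}<0$ (a sum of nonnegative integers cannot be bounded above by a negative number). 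Summing over $(i,j)$ and discarding the empty fibres produces precisely $\sum_{i=0}^{q}\sum_{j=1,\ \tau_{(i,j)}\ge 0}^{M}\frac{(\tau_{(i,j)}+m)!}{\tau_{(i,j)}!\,m!}$, the second summand.

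Two bookkeeping checks then finish the argument. I must confirm that extending the outer sum to the full rectangle $[0,q]\times[1,M]$ rather than to $([0,q]\times[1,M])\setminus(q,M)$ introduces no spurious term: direct substitution gives $\tau_{(q,M)}=\lfloor -p^b(q+1)M/(p^b(q+1)M)\rfloor=-1<0$, so the excluded pair $(q,M)$ has an empty fibre and is already suppressed by the condition $\tau_{(i,j)}\ge 0$. I would also record that the boundary case $\tau_{(i,j)}=0$ is handled uniformly, since then the only admissible tuple is $(0,\ldots,0)$ and $\binom{0+m}{m}=1$. I do not expect a genuine obstacle here: once Lemma \ref{delta lambda} secures disjointness and injectivity, the proof is purely the stars-and-bars count together with the observation that $(q,M)$ lies outside the range $\tau_{(i,j)}\ge 0$, so the only care needed is in matching the binomial coefficients to the factorial expressions appearing in the statement.
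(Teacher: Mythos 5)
Your proposal is correct and follows essentially the same route as the paper: both decompose $\Lambda(\negP_{m+1})$ via Corollary \ref{corolario lambda} into the family of elements $\negdelta_{k_2,\ldots,k_{m+1}}^{i,j}$ and the family of elements $\neglambda_{k_2,\ldots,k_{m+1}}$, invoke Lemma \ref{delta lambda} for disjointness and injectivity of the index data, and count the admissible tuples by stars and bars. Your explicit check that $\tau_{(q,M)}=-1<0$, so that the excluded pair $(q,M)$ contributes no term to the double sum over the full rectangle, is a bookkeeping point the paper's proof leaves implicit.
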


\begin{proof}
Let $\mathbb{A}:= \{  \negdelta_{k_2,\ldots,k_{m+1}}^{i,j} \in \hLambda(\negP_{m+1}) \mbox{ ; }  k_2, \ldots, k_{m+1} \in \mathbb{N}_{0} \mbox{ with }  \sum_{\ell=2}^{m+1} k_{\ell} \leq \tau_{(i,j)}\}$
and $\mathbb{B}:=\{ \neglambda_{{k_2}, \ldots , k_{m+1}} \in \mathbb{N}_{0}^{m+1} \mbox{ ; }k_2,\ldots,k_{m+1} \in \mathbb{N}_{0} \mbox{ such that } \sum_{\ell=2}^{m+1} k_{\ell} \leq m-1 \}.$
Thus, by Corollary \ref{corolario lambda}, $\Lambda(\negP_{m+1}) = \mathbb{A} \cup \mathbb{B}$. For each $\ell \in \{2,\ldots,m+1\}$, $k_{\ell}(q+1)M +iM+j \geq 0$ if and only if $k_{\ell}\geq 0$. Now, note that if $\tau_{(i,j)}<0$, then $\negdelta_{k_2,\ldots,k_{m+1}}^{i,j} \notin \mathbb{A}$, since there are not $k_2, \ldots, k_{m+1} \in \mathbb{N}_{0} \mbox{ with }  \sum_{\ell=2}^{m+1} k_{\ell} \leq \tau_{(i,j)}$. So,
using Lemma \ref{delta lambda} and the number of non-negative integer solutions ${k_2}, \ldots , k_{m+1}$ to $\sum_{\ell=2}^{m+1} k_{\ell} \leq \tau_{(i,j)}$ and $\sum_{\ell=2}^{m+1} k_{\ell} \leq m-1$ we can conclude that $\displaystyle |\mathbb{A}|=\sum_{i=0}^{q} \sum_{\substack{j=1 \\ \tau_{(i,j)}\geq 0}}^{M} \dfrac{(\tau_{(i,j)} + m )!}{\tau_{(i,j)}! m!}$ and $|\mathbb{B}| = \dfrac{(2m-1)!}{(m-1)! m!}$. Now, by Lemma \ref{delta lambda}, we have that $\mathbb{A} \cap \mathbb{B} = \emptyset$ and the result follows.
\end{proof}

By Theorem \ref{teo gaps}, we have that $\displaystyle G(\negP_{m+1})=\bigcup_{\negbeta \in\Lambda(\negP_{m+1})} (\overline{\nabla}(\negbeta)\cap \NN_0^{m+1})$. In the follow we present a bound for the cardinality of $G(\negP_{m+1})$.

\begin{proposition}
Let $\Lambda_{m+1}:=|\Lambda(\negP_{m+1})|$ and suppose that $\Lambda(\negP_{m+1})=\{ \negbeta_1, \ldots, \negbeta_{\Lambda} \}$, with $\negbeta_k = (\beta_{1}^{(k)}, \ldots , \beta_{m+1}^{(k)})$ for each $k \in \{ 1, \ldots , \Lambda\}$. Then
$$
\displaystyle |G(\negP_{m+1})| \leq \sum_{k=1}^{\Lambda_{m+1}} \left( \sum_{r=1}^{m+1} \prod_{s\neq r} \beta_{s}^{(k)} \right).
$$
\end{proposition}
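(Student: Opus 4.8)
The plan is to bound the cardinality of the gap set by directly estimating the size of each piece in the union
$$
G(\negP_{m+1})=\bigcup_{\negbeta \in\Lambda(\negP_{m+1})} (\overline{\nabla}(\negbeta)\cap \NN_0^{m+1})
$$
provided by Theorem \ref{teo gaps}(1), and then applying the crude subadditivity bound $|\bigcup_k S_k|\leq \sum_k |S_k|$. Since $\Lambda(\negP_{m+1})=\{\negbeta_1,\ldots,\negbeta_{\Lambda}\}$ has exactly $\Lambda_{m+1}$ elements, the proof reduces to showing that for each fixed relative maximal element $\negbeta_k=(\beta_1^{(k)},\ldots,\beta_{m+1}^{(k)})$ one has
$$
|\overline{\nabla}(\negbeta_k)\cap \NN_0^{m+1}| \leq \sum_{r=1}^{m+1} \prod_{s\neq r} \beta_s^{(k)}.
$$
Summing this over $k$ then gives the stated inequality immediately.

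To establish the per-element bound, I would unwind the definition $\overline{\nabla}(\negbeta_k)=\bigcup_{r=1}^{m+1}\overline{\nabla}_r(\negbeta_k)$ and again use subadditivity to get $|\overline{\nabla}(\negbeta_k)\cap \NN_0^{m+1}|\leq \sum_{r=1}^{m+1} |\overline{\nabla}_r(\negbeta_k)\cap \NN_0^{m+1}|$. Recall that, by definition, $\overline{\nabla}_r(\negbeta_k)$ consists of those $\neggamma\in\ZZ^{m+1}$ with $\gamma_r=\beta_r^{(k)}$ and $\gamma_s<\beta_s^{(k)}$ for every $s\neq r$. Intersecting with $\NN_0^{m+1}$ forces $0\leq \gamma_s\leq \beta_s^{(k)}-1$ for each $s\neq r$, while the coordinate $\gamma_r$ is fixed. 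The number of such integer tuples is therefore exactly $\prod_{s\neq r}\beta_s^{(k)}$ (with the usual convention that a factor $\beta_s^{(k)}$ contributes the count of admissible values $0,\ldots,\beta_s^{(k)}-1$). This yields $|\overline{\nabla}_r(\negbeta_k)\cap \NN_0^{m+1}|\leq \prod_{s\neq r}\beta_s^{(k)}$, and summing over $r$ completes the per-element estimate.

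The only subtle point, and what I would flag as the main thing to verify rather than a genuine obstacle, is the treatment of coordinates for which $\beta_s^{(k)}\leq 0$: in that case the constraint $0\leq \gamma_s<\beta_s^{(k)}$ has no solutions, so $\overline{\nabla}_r(\negbeta_k)\cap\NN_0^{m+1}$ is empty and the corresponding product should be read as contributing zero. Since the $\negbeta_k$ lie in $\Lambda(\negP_{m+1})\subseteq\NN_0^{m+1}$, all coordinates are in fact nonnegative, and the counting identity $\prod_{s\neq r}\beta_s^{(k)}$ is valid as an exact count when each $\beta_s^{(k)}\geq 1$ and gives the correct (zero) answer whenever some relevant coordinate vanishes; in every case it serves as an upper bound. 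Assembling the two layers of subadditivity with the elementary box-count then produces the claimed inequality, and the fact that it is an inequality rather than an equality is precisely because the sets $\overline{\nabla}_r(\negbeta_k)$ may overlap across different $r$ and across different $k$, so no inclusion–exclusion refinement is needed.
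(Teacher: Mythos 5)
Your proposal is correct and follows essentially the same route as the paper: both start from Theorem \ref{teo gaps}(1), bound $|G(\negP_{m+1})|$ by summing over the $\Lambda_{m+1}$ relative maximal elements, and compute the box count $|\overline{\nabla}_r(\negbeta_k)\cap \NN_0^{m+1}| = \prod_{s\neq r}\beta_s^{(k)}$. The only cosmetic difference is that the paper asserts the per-element count as an equality (valid because the sets $\overline{\nabla}_r(\negbeta_k)$, $r=1,\ldots,m+1$, are pairwise disjoint, since membership forces $\gamma_r=\beta_r^{(k)}$ for one index and $\gamma_r<\beta_r^{(k)}$ for the others), whereas you use subadditivity over $r$, which is weaker but entirely sufficient for the stated inequality.
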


\begin{proof}
For each $\negbeta_k \in \Lambda(\negP_{m+1})$, we have that $\displaystyle |\overline{\nabla}(\negbeta_k)\cap \NN_0^{m+1}| = \sum_{r=1}^{m+1} \prod_{s\neq r} \beta_{s}^{(k)}$ and the result follows.  
\end{proof}

For the particular case $m=1$ we have that 
$$
\displaystyle \Lambda_2= |\Lambda(\negP_{2})| = 1 + \sum_{i=0}^{q} \sum_{\substack{j=1 \\ \tau_{(i,j)}\geq 0}}^{M} \tau_{(i,j)}.
$$

Given $(m_1,m_2), (n_1,n_2) \in \mathbb{N}_{0}^{2}$, we write $(m_1, m_2) \prec (n_1,n_2)$ if $m_2 < n_2$.

\begin{remark}
Note that, by Lemma \ref{delta lambda} and since $(i,j) \in ([0,q]\times[1,M]\cap \ZZ^2)\backslash (q,M)$, for all $\negalpha=(\alpha_1,\alpha_2), \negbeta=(\beta_1,\beta_2) \in \Lambda(\negP_{2})$, with $\negalpha \neq \negbeta$, we have that $\alpha_1 \neq \beta_1$ and $\alpha_2 \neq \beta_2$. So, we have $\negalpha \prec \negbeta$ or $\negbeta \prec \negalpha$. 
\end{remark}

Let $n \geq 1$ be an integer and let $A=\{ \negalpha_1 \prec \negalpha_2 \prec \cdots \prec \negalpha_n \} \subset \mathbb{N}_{0}^2$, where $\negalpha_k = (\alpha_{1}^{(k)}, \alpha_{2}^{(k)})$, for each $k=1,\ldots,n$. Define $\zeta_1(A):=0$ and, for $t=2,\ldots,n$,
\begin{equation}\label{zeta}
\zeta_t(A):=|\{ \negalpha_k =(\alpha_{1}^{(k)}, \alpha_{2}^{(k)}) \in A \mbox{ ; }  \negalpha_k \prec \negalpha_t \mbox{ and } \alpha_{1}^{(k)} > \alpha_{1}^{(t)}\}|.
\end{equation}

\begin{proposition}
Let $\Lambda(\negP_{2})=\{ \negbeta_1 \prec \negbeta_2 \prec \cdots \prec \negbeta_{\Lambda_2} \}$, where, for each $t=1,\ldots,\Lambda_2$, $\negbeta_t = (\beta_{1}^{(t)}, \beta_{2}^{(t)})$, and $\zeta_t(\Lambda(\negP_{2}))$ is given (\ref{zeta}). Then
$$
\displaystyle |G(\negP_2)| = \sum_{t=1}^{\Lambda_2}[ \beta_{1}^{(t)} + \beta_{2}^{(t)} - \zeta_t(\Lambda(\negP_{2}))].
$$
\end{proposition}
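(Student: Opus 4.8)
The plan is to realize $G(\negP_2)$ as a union of finite sets and count it by a telescoping (inclusion–exclusion) argument that processes the maximal elements in increasing $\prec$-order. By Theorem \ref{teo gaps}(1) one has $G(\negP_2)=\bigcup_{t=1}^{\Lambda_2}V_t$, where I set $V_t:=\overline{\nabla}(\negbeta_t)\cap\mathbb{N}_0^2$. First I would record the shape of each $V_t$: writing $\overline{\nabla}(\negbeta_t)=\overline{\nabla}_1(\negbeta_t)\cup\overline{\nabla}_2(\negbeta_t)$ and intersecting with $\mathbb{N}_0^2$ splits $V_t$ into a \emph{vertical segment} $\{(\beta_1^{(t)},\gamma):0\le\gamma<\beta_2^{(t)}\}$ of size $\beta_2^{(t)}$ and a \emph{horizontal segment} $\{(\gamma,\beta_2^{(t)}):0\le\gamma<\beta_1^{(t)}\}$ of size $\beta_1^{(t)}$. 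These are disjoint, so $|V_t|=\beta_1^{(t)}+\beta_2^{(t)}$, and the right-hand side of the claimed identity is exactly $\sum_t|V_t|$ diminished by the correction $\sum_t\zeta_t$. The whole point is therefore to show that $\zeta_t$ counts precisely the points of $V_t$ already lying in $V_1\cup\cdots\cup V_{t-1}$.

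The enumeration rests on the standard identity $|G(\negP_2)|=\sum_{t=1}^{\Lambda_2}|V_t\setminus(V_1\cup\cdots\cup V_{t-1})|$, the elements $\negbeta_1\prec\cdots\prec\negbeta_{\Lambda_2}$ being already listed in increasing $\prec$-order, i.e. with $\beta_2^{(1)}<\cdots<\beta_2^{(\Lambda_2)}$. By the preceding remark (a consequence of Lemma \ref{delta lambda}), $\prec$ totally orders $\Lambda(\negP_2)$ and forces the first coordinates $\beta_1^{(t)}$ to be pairwise distinct as well as the second coordinates $\beta_2^{(t)}$. It then remains to evaluate $|V_t\cap(V_1\cup\cdots\cup V_{t-1})|$ for each $t$, which is where the distinctness of coordinates does all the work.

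The key step is the explicit determination of these overlaps. For $k<t$ one has $\beta_2^{(k)}<\beta_2^{(t)}$, so every point of $V_k$ has second coordinate at most $\beta_2^{(k)}<\beta_2^{(t)}$; hence no point of the horizontal segment of $V_t$ (all of second coordinate $\beta_2^{(t)}$) can lie in an earlier $V_k$, and the horizontal segment contributes $\beta_1^{(t)}$ genuinely new points. A point of the vertical segment of $V_t$ has the form $(\beta_1^{(t)},\gamma)$; it cannot lie on the vertical segment of any $V_k$ (that would force $\beta_1^{(t)}=\beta_1^{(k)}$, impossible for $k\ne t$), so it can only lie on the horizontal segment of some $V_k$, and this happens exactly when $\gamma=\beta_2^{(k)}$ and $\beta_1^{(t)}<\beta_1^{(k)}$. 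Thus the points of $V_t$ already contained in $V_1\cup\cdots\cup V_{t-1}$ are precisely the $(\beta_1^{(t)},\beta_2^{(k)})$ with $\negbeta_k\prec\negbeta_t$ and $\beta_1^{(k)}>\beta_1^{(t)}$; these are pairwise distinct (distinct values $\beta_2^{(k)}$), so there are exactly $\zeta_t=\zeta_t(\Lambda(\negP_2))$ of them by \eqref{zeta}. Consequently $V_t$ contributes $\beta_1^{(t)}+\beta_2^{(t)}-\zeta_t$ new points, and summing over $t$ yields the stated formula.

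The only delicate point — the one I would verify most carefully — is the trichotomy of overlap types: that the horizontal segment of $V_t$ meets none of the earlier sets, and that vertical–vertical or horizontal–horizontal coincidences never occur. All three rely on the distinctness of the two families of coordinates furnished by Lemma \ref{delta lambda}, so the hypothesis $(i,j)\in([0,q]\times[1,M]\cap\mathbb{Z}^2)\setminus(q,M)$ enters precisely here. Once this is in place the argument is purely combinatorial and the identity follows with no further input.
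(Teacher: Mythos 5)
Your proposal is correct and takes essentially the same approach as the paper: the paper's proof is a one-line appeal to Theorem \ref{teo gaps} and the definition of $\zeta_t(\Lambda(\negP_{2}))$, and your argument is exactly the detailed execution of that plan (writing $G(\negP_2)$ as the union of the sets $\overline{\nabla}(\negbeta_t)\cap \NN_0^2$ of size $\beta_{1}^{(t)}+\beta_{2}^{(t)}$, and using the coordinate-distinctness from Lemma \ref{delta lambda} to identify the overlap with earlier sets as exactly $\zeta_t$ points). In fact your write-up supplies the overlap analysis that the paper leaves implicit.
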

\begin{proof}
The results follows directly from Theorem \ref{teo gaps} and the definition of $\zeta_t(\Lambda(\negP_{2}))$.
\end{proof}

\section{Generalized Weierstrass Semigroup at certain $m+1$ points on $\mathcal{Y}_{n,s}$}

 Denoting for simplicity the point $P_{(\alpha_i,0,0)} \in \mathcal{Y}_{n,s}$ as given in Equation (\ref{div z Yns}) by $P_{i}$, where $1 \leq i \leq q$, and taking $p^b=1$ in the equations, results and proofs in the previous section, we get the similar results for the absolute and relative maximals elements in $\widehat{H} (\mathbf{P}_{m+1})$ and the minimal generating set of the Weierstrass semigroup $H (\mathbf{P}_{m+1})$, for $1 \leq m \leq q$, where $\mathbf{P}_{m+1} = (P_{\infty}, P_1, \ldots , P_m)$. We will summarize the main results below. The proofs will be omitted because they are analogous to those presented in the previous section.


\begin{theorem} Let $1\leq m\leq q$ and $\negP_{m+1} = (P_{\infty}, P_1, \ldots , P_{m})$. Let
$$\negalpha^{i,j,m}:=\left((q^2 - m)(q+1)M -iqM -jq^3,iM+j,\ldots,iM+j\right)\in \ZZ^{m+1}.$$
Then
$$\hGamma(\negP_{m+1})\cap \cC_{m+1}=\{\negalpha^{i,j,m} \ : (i,j)\in ([0,q]\times[1,M]\cap \ZZ^2) \backslash (q,M)\}\cup \{{\bf 0}\}.$$
\end{theorem}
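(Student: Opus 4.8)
The plan is to repeat the argument used for the corresponding theorem on $\mathcal{X}_{a,b,n,s}$ in Section 3 under the specialization $p^b=1$, after checking that every structural ingredient specializes correctly. From (\ref{div x Yns})--(\ref{div z Yns}) the divisors of $x-\alpha$, $y-\beta$ and $z$ on $\mathcal{Y}_{n,s}$ are exactly those of (\ref{div x})--(\ref{div z}) with $p^b=1$ (so that $q/p^b=q$), the Weierstrass semigroup at infinity is $H(P_\infty)=\langle qM,q^3,(q+1)M\rangle$, and by (\ref{K canonico Yns}) the divisor $K=[(q^2-1)(q+1)M-q^3-1]P_\infty$ is canonical. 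Since $(x-\alpha_i)-(x-\alpha_{i-1})=(q+1)M(P_i-P_{i-1})$, the smallest $t$ with $t(P_i-P_{i-1})$ principal is $(q+1)M$, so
$$\cC_{m+1}=\{\negbeta\in\ZZ^{m+1}\ :\ 0\leq\beta_i<(q+1)M\ \text{for }i=2,\ldots,m+1\},$$
with $\Theta_{m+1}$ generated by the $\negeta^i$ of (\ref{eta_i}). It remains to prove the two inclusions.

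For the inclusion $\supseteq$, I would first establish the discrepancy statement analogous to Proposition \ref{discrepancia m upla}: for $(i,j)\in([0,q]\times[1,M]\cap\ZZ^2)\backslash(q,M)$ and $1\leq m\leq q$, the divisor
$$A=[(q^2-m)(q+1)M-iqM-jq^3]P_\infty+\sum_{\ell=1}^{m}(iM+j)P_\ell$$
is a discrepancy with respect to every pair of distinct points in $\{P_\infty,P_1,\ldots,P_m\}$. The inequality $\cL(A)\neq\cL(A-P)$ follows from the function $f=z^{M-j}y^{q-i}/\prod_{k=1}^{m}(x-\alpha_k)$, whose divisor is computed directly from (\ref{div x Yns})--(\ref{div z Yns}) and which lies in $\cL(A)\setminus\cL(A-P)$; the equalities $\cL(A-P)=\cL(A-P-Q)$ follow from Noether's Lemma \ref{lemma noether} by exhibiting $z^{j-1}y^i(x-\alpha_2)\cdots(x-\alpha_m)\in\cL(K+P+Q-A)\setminus\cL(K+Q-A)$, exactly as in the $\mathcal{X}_{a,b,n,s}$ case. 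Proposition \ref{prop equiv discrepancia} then yields $\negalpha^{i,j,m}\in\hGamma(\negP_{m+1})$, and $0\leq iM+j<(q+1)M$ places it in $\cC_{m+1}$; that $\textbf{0}$ belongs to the intersection is immediate.

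For the inclusion $\subseteq$ I would argue by induction on $m$, transcribing the proof of the $\mathcal{X}_{a,b,n,s}$ theorem with $p^b=1$. The case $m=1$ writes the single free coordinate $\beta_2<(q+1)M$ as $iM+j$ and concludes via the discrepancy statement and Proposition \ref{absmax}. In the inductive step, for $\negalpha=(\alpha_0,\ldots,\alpha_m)\in\hGamma(\negP_{m+1})\cap\cC_{m+1}$ one sets $\tilde t=\min\{t\in\NN:\alpha_0+t(q+1)M\geq0\}$ and checks $1\leq\tilde t\leq m$. When $\tilde t\leq m-1$, translating by $\Theta_{m+1}$ and taking the $\lub$ with $\textbf{0}$ drops to a tuple of smaller length, the induction hypothesis locates an $\negalpha^{i,j,m-\tilde t}$ in the relevant $\nabla_2$, and absolute maximality gives $\negalpha=\negalpha^{i,j,m}$. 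When $\tilde t=m$, the element $\alpha_0+m(q+1)M$ lies in $H(P_\infty)$ and is smaller than $(q+1)M$, hence equals $(q-i)qM+(M-j)q^3$ for suitable $(i,j)$, forcing $\alpha_0=\alpha_0^{i,j,m}$; comparability of $\negalpha$ and $\negalpha^{i,j,m}$ then follows from the $p^b=1$ analogue of Lemma \ref{lemma ast}, proved with the function $z^{j-1}y^i(x-\alpha_1)\cdots(x-\alpha_{m-1})$ and Noether's Lemma.

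The step demanding the most care is verifying that the valuation bookkeeping in the discrepancy computation and in the Lemma \ref{lemma ast} analogue survives the substitution $p^b=1$: one must confirm that the pole orders at $P_\infty$ of $x$, $y$, $z$ collapse to $(q+1)M$, $qM$, $q^3$, that the coefficient of $P_\infty$ in $K+P+Q-A$ reduces to $(m-1)(q+1)M+iqM+(j-1)q^3$ (minus $1$ when $P_\infty\notin\{P,Q\}$) and stays nonnegative, and that the coordinates at the $P_\ell$ match $iM+j$ as required. Because $H(P_\infty)$, the region $\cC_{m+1}$, and $K$ all specialize consistently and the coordinate count in the induction is unchanged, no new phenomenon appears and the argument closes precisely as in Section 3.
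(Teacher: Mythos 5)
Your proposal is correct and takes essentially the same approach as the paper: the paper's own (omitted) proof of this theorem is exactly the specialization $p^b=1$ of the Section~3 argument, i.e.\ the discrepancy computation of Proposition~\ref{discrepancia m upla}, the vanishing statement of Lemma~\ref{lemma ast}, and the induction on $m$ with the two cases $1\leq \tilde t\leq m-1$ and $\tilde t=m$, all of which you transcribe faithfully. Your verification that the divisors (\ref{div x Yns})--(\ref{div z Yns}), the semigroup $H(P_\infty)=\langle qM,q^3,(q+1)M\rangle$, the canonical divisor (\ref{K canonico Yns}), and the region $\cC_{m+1}$ specialize consistently is precisely the bookkeeping the paper leaves to the reader.
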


\bigskip

\begin{corollary} \label{corolario gamma Y}
$$
\begin{array}{ll}
\hGamma(\negP_{m+1})= & \left\{  \left((q^2 - m - \sum_{\ell=2}^{m+1} k_\ell)(q+1)M -iqM -jq^3, \right. \right. \\
& k_2(q+1)M + iM+j,\ldots,k_{m+1}(q+1)M + iM+j ) \in \ZZ^{m+1} \mbox{ ; } \\
& (i,j)\in ([0,q]\times[1,M]\cap \ZZ^2) \backslash (q,M) \mbox{ , }k_\ell \in \ZZ  \mbox{ for } \ell=2,\ldots,m+1 \}.
\end{array}
$$
\end{corollary}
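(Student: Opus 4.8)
The plan is to reproduce the proof of Corollary \ref{corolario gamma} with $p^b=1$, since every construction of Section 3 specializes to $\mathcal{Y}_{n,s}$ once $q/p^b$ is replaced by $q$. The engine is Theorem \ref{maximals}, which expresses
$$\hGamma(\negP_{m+1})=\bigl(\hGamma(\negP_{m+1})\cap\cC_{m+1}\bigr)+\Theta_{m+1}.$$
The theorem immediately preceding this corollary already supplies the first summand, namely $\hGamma(\negP_{m+1})\cap\cC_{m+1}=\{\negalpha^{i,j,m}\}\cup\{\mathbf 0\}$, so the remaining task is to identify $\cC_{m+1}$ and $\Theta_{m+1}$ for $\mathcal{Y}_{n,s}$ and then to carry out the Minkowski sum.

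First I would fix the data defining $\cC_{m+1}$ and $\Theta_{m+1}$, that is, the integers $a_i$ and the generators $\negeta^i$. By Equation (\ref{div x Yns}) the function $(x-\alpha_i)/(x-\alpha_{i-1})$ has principal divisor $(q+1)M(P_i-P_{i-1})$, so $a_i\mid(q+1)M$; the reverse inequality $a_i=(q+1)M$ follows from the structure of $H(P_\infty)=\langle qM,q^3,(q+1)M\rangle$ exactly as for $\mathcal{X}_{a,b,n,s}$. Hence
$$\cC_{m+1}=\{\negbeta\in\ZZ^{m+1}\ :\ 0\le\beta_i<(q+1)M\ \text{for } i=2,\ldots,m+1\},$$
and $\Theta_{m+1}$ is generated by the vectors $\negeta^i$ of (\ref{eta_i}); these agree with the $\mathcal{X}$-case since the divisor (\ref{div x Yns}) coincides with (\ref{div x}) when $p^b=1$.

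Then I would expand a general element of $\Theta_{m+1}$ and relabel the coefficients as telescoping differences $k_\ell$, exactly as in Corollary \ref{corolario gamma}, obtaining
$$\Theta_{m+1}=\Bigl\{\Bigl(-\sum_{\ell=2}^{m+1}k_\ell(q+1)M,\,k_2(q+1)M,\ldots,k_{m+1}(q+1)M\Bigr)\ :\ k_\ell\in\ZZ\Bigr\}.$$
Adding this lattice to each $\negalpha^{i,j,m}$ shifts the zeroth coordinate by $-\sum_\ell k_\ell(q+1)M$ and the $\ell$-th coordinate by $k_\ell(q+1)M$, and the resulting tuples are precisely those displayed in the statement; together with Remark \ref{obs gamma} this completes the description of $\hGamma(\negP_{m+1})$.

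I do not expect a real obstacle here, because the argument is the $p^b=1$ specialization of a proof already given in Section 3: the discrepancy computations underlying the preceding theorem are formally identical, and the Minkowski sum is pure bookkeeping. The single point that genuinely needs to be re-checked is the minimality $a_i=(q+1)M$, i.e.\ that no proper divisor of $(q+1)M$ makes $P_i-P_{i-1}$ principal; this is verified through the generators of $H(P_\infty)$ and is the only place where the specific arithmetic of $\mathcal{Y}_{n,s}$ enters.
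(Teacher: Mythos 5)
Your proposal is correct and matches the paper's intended proof: the paper omits the argument precisely because it is the $p^b=1$ specialization of Corollary \ref{corolario gamma} --- Theorem \ref{maximals} applied to the preceding theorem's description of $\hGamma(\negP_{m+1})\cap\cC_{m+1}$, with $\Theta_{m+1}$ rewritten via the telescoped coefficients $k_\ell$ --- and your extra verification that $a_i=(q+1)M$ is a point the paper leaves implicit (note only that a function realizing $tP_i-tP_{i-1}$ has its pole at the affine point $P_{i-1}$, so the semigroup one must invoke is $H(P_{i-1})$ rather than $H(P_\infty)$). The one caveat, inherited from the paper's own statement and proof of Corollary \ref{corolario gamma}, is that the Minkowski sum actually produced is $(\{\negalpha^{i,j,m}\}\cup\{\mathbf{0}\})+\Theta_{m+1}$, and the orbit $\{\mathbf{0}\}+\Theta_{m+1}$ (whose last $m$ coordinates are $\equiv 0 \pmod{(q+1)M}$, whereas $iM+j$ never is) does not occur among the displayed tuples, so both you and the paper silently drop the summand arising from $\mathbf{0}$.
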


\bigskip

For $(i,j)\in ([0,q]\times[1,M]\cap \ZZ^2) \backslash (q,M)$ and $k_2, \ldots,k_{m+1} \in \mathbb{Z}$, let
\begin{equation} \label{beta classico}
\begin{array}{ll}
\negbeta_{k_2,\ldots,k_{m+1}}^{i,j,m}:= & \left((q^2 - m - \sum_{\ell=2}^{m+1} k_\ell)(q+1)M -iqM -jq^3,\right . \\
& \left . k_2(q+1)M + iM+j,\ldots,k_{m+1}(q+1)M + iM+j \right) \in \ZZ^{m+1}.
\end{array}
\end{equation}

\begin{corollary} \label{classical WS Y}
Let $i,j,k_2, \ldots, k_{m+1}$ and $\negbeta_{k_2,\ldots,k_{m+1}}^{i,j,m}$ be as above. Then, $\Gamma(\negP_{m+1})$ is the set of all $\negbeta_{k_2,\ldots,k_{m+1}}^{i,j,m} \in \mathbb{N}_{0}^{m+1}$.
\end{corollary}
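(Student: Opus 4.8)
The plan is to mirror verbatim the argument that established Corollary \ref{classical WS} for the curve $\mathcal{X}_{a,b,n,s}$, exploiting the fact that $\mathcal{Y}_{n,s}$ is recovered from $\mathcal{X}_{a,b,n,s}$ by the specialization $p^b=1$ already used throughout this section. The key structural input is that the classical minimal generating set is nothing more than the nonnegative part of the absolute maximal set: by Remark \ref{obs gamma} one has the identity $\Gamma(\negP_{m+1})=\hGamma(\negP_{m+1})\cap \NN_0^{m+1}$, which holds since the hypothesis $q\geq m$ is satisfied (here $1\leq m\leq q$). Thus the entire problem reduces to intersecting the already-determined set $\hGamma(\negP_{m+1})$ with the nonnegative orthant.

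Concretely, I would first quote Corollary \ref{corolario gamma Y}, which states that $\hGamma(\negP_{m+1})$ consists precisely of the tuples $\negbeta_{k_2,\ldots,k_{m+1}}^{i,j,m}$ as $(i,j)$ ranges over $([0,q]\times[1,M]\cap \ZZ^2)\backslash(q,M)$ and $k_2,\ldots,k_{m+1}$ range over $\ZZ$. Substituting this description into the identity from Remark \ref{obs gamma} immediately gives that $\Gamma(\negP_{m+1})$ is exactly the collection of those $\negbeta_{k_2,\ldots,k_{m+1}}^{i,j,m}$ all of whose coordinates are nonnegative, which is the set asserted in the statement. Because the corollary phrases its conclusion as ``the set of all $\negbeta_{k_2,\ldots,k_{m+1}}^{i,j,m}\in \NN_0^{m+1}$,'' no explicit solution of the nonnegativity constraints is actually required; the intersection with $\NN_0^{m+1}$ is left implicit in the notation.

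If one nonetheless wished to spell out which tuples survive the intersection, the only bookkeeping needed is the sign analysis of the coordinates, which is entirely routine: for $(i,j)\in[0,q]\times[1,M]$ each of the last $m$ coordinates $k_\ell(q+1)M+iM+j$ is nonnegative if and only if $k_\ell\geq 0$ (the observation recorded just before the analogous corollary in the $\mathcal{X}$ section), while the $P_\infty$-coordinate contributes the single additional inequality $(q^2-m-\sum_{\ell=2}^{m+1}k_\ell)(q+1)M-iqM-jq^3\geq 0$. These conditions cut out the finite parameter region, but they need not appear in the proof itself.

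I do not expect a genuine obstacle here: the statement is a direct corollary, and the argument is two lines once Remark \ref{obs gamma} and Corollary \ref{corolario gamma Y} are in hand. The only point requiring minor care is confirming that the hypothesis $q\geq m$ needed for Remark \ref{obs gamma} is in force, which follows from the standing assumption $1\leq m\leq q$, and that the parametrization of $\hGamma(\negP_{m+1})$ is transcribed exactly from Corollary \ref{corolario gamma Y} so that the indexing sets match. Accordingly, I would simply write that the result follows directly from Remark \ref{obs gamma} and Corollary \ref{corolario gamma Y}.
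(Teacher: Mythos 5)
Your proposal is correct and matches the paper's approach exactly: the paper proves the analogous Corollary \ref{classical WS} for $\mathcal{X}_{a,b,n,s}$ by citing Remark \ref{obs gamma} together with Corollary \ref{corolario gamma}, and states that the $\mathcal{Y}_{n,s}$ proofs are obtained by the same argument with $p^b=1$, which is precisely your reduction via Remark \ref{obs gamma} and Corollary \ref{corolario gamma Y}.
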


\bigskip

\begin{theorem} Let $1\leq m\leq q$ and $\negP_{m+1} = (P_{\infty}, P_1, \ldots , P_{m})$. Let
$$\negbeta^{0,0,m}=\left((m-1)(q+1)M,0,\ldots,0\right)\in \ZZ^{m+1}$$
and for $(i,j)\in ([0,q]\times[1,M]\cap \ZZ^2) \backslash (q,M)$, let
$$\negbeta^{i,j,m}:=\left((q^2 - 1)(q+1)M -iqM -jq^3,iM+j,\ldots,iM+j\right)\in \ZZ^{m+1}.$$
Then
$$\hLambda(\negP_{m+1})\cap \cC(\negP_{m+1})=\{\negbeta^{i,j,m} \ : (i,j)\in ([0,q]\times[1,M]\cap \ZZ^2) \backslash (q,M)\}\cup \{\negbeta^{0,0,m}\}.$$
\end{theorem}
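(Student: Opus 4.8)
The plan is to mirror, with $p^b=1$, the argument given for $\mathcal{X}_{a,b,n,s}$ in the previous section, exploiting the divisor formulas (\ref{div x Yns}), (\ref{div y Yns}) and (\ref{div z Yns}) together with the canonical divisor (\ref{K canonico Yns}). The case $m=1$ follows at once from the preceding theorem, since for two points the relative and absolute maximal elements of $\hH(\negP_2)$ coincide. Hence I would assume $m\geq 2$ and set $R_{m+1}:=\{\negbeta^{i,j,m}\}\cup\{\negbeta^{0,0,m}\}$, proving the two inclusions separately.

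For $R_{m+1}\subseteq\hLambda(\negP_{m+1})\cap \cC_{m+1}$, the containment in $\cC_{m+1}$ is immediate because $0\leq iM+j<(q+1)M$ for $(i,j)\in([0,q]\times[1,M]\cap\ZZ^2)\backslash(q,M)$, and the last $m$ coordinates of $\negbeta^{0,0,m}$ vanish. To obtain membership in $\hLambda(\negP_{m+1})$ I would invoke Proposition \ref{relmax}(3): it suffices to check that $D:=D_{\negbeta^{i,j,m}-\textbf{1}+\textbf{e}_1+\textbf{e}_k}$ is a discrepancy with respect to $P_\infty$ and $P_k$ for every $k=1,\ldots,m$. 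For $(i,j)\neq(0,0)$ the function $z^{M-j}y^{q-i}/(x-\alpha_k)$ lies in $\cL(D)\setminus\cL(D-P_k)$, while Noether's Reduction Lemma \ref{lemma noether} reduces the equality $\cL(D-P_\infty)=\cL(D-P_\infty-P_k)$ to exhibiting $z^{j-1}y^i\in\cL(K-D+P_\infty+P_k)\setminus\cL(K-D+P_\infty)$, which is a short divisor computation. The element $\negbeta^{0,0,m}$ is handled in the same spirit, using $\prod_{\ell\neq k}(x-\alpha_\ell)$ and $z^{M-1}y^q/\bigl((x-\alpha_1)\cdots(x-\alpha_m)\bigr)$ in place of the two functions above.

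For the reverse inclusion I would take $\negbeta\in\hLambda(\negP_{m+1})\cap \cC_{m+1}$ and use that relative maximality forces $\nabla_k^{m+1}(\negbeta)\neq\emptyset$ for every index $k$. By the preceding theorem each nonempty $\nabla_k^{m+1}(\negbeta)$ contains an absolute maximal element $\negalpha^{i,j,m}$; comparing the coordinates of the elements obtained from the different indices forces a single common value $\beta_k=iM+j$ for $k=2,\ldots,m+1$, and hence a unique $\negalpha^{i,j,m}\in\bigcap_{k=2}^{m+1}\nabla_k^{m+1}(\negbeta)$. Since $\negbeta$ is relative, not absolute, maximal we must have $\negbeta\neq\negalpha^{i,j,m}$, i.e. $\beta_1>\alpha_1^{i,j,m}$, so that $\negalpha^{i,j,m}\in\nabla_{I\backslash\{1\}}(\negbeta)$. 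The assignment $\negbeta\mapsto\negalpha^{i,j,m}$ is then injective, giving $\#(\hLambda(\negP_{m+1})\cap \cC_{m+1})\leq\#(\hGamma(\negP_{m+1})\cap \cC_{m+1})=\#R_{m+1}$; combined with the first inclusion this yields equality.

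The routine part consists of the explicit divisor computations verifying that the listed functions realize the required poles and zeros; the delicate step is the reverse inclusion, where one must argue that the $m$ trailing coordinates of a relative maximal element in $\cC_{m+1}$ all coincide with a common $iM+j$ so that the cardinality comparison against $\hGamma(\negP_{m+1})\cap \cC_{m+1}$ closes. The fact that this $\hGamma$-count equals $\#R_{m+1}$ ultimately rests on the bijective matching provided by the preceding theorem.
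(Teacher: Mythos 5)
Your proposal is correct and follows essentially the same route as the paper: the paper itself proves this theorem by declaring it the $p^b=1$ specialization of the corresponding theorem for $\mathcal{X}_{a,b,n,s}$, whose proof you reproduce faithfully — the $m=1$ reduction to the absolute-maximal theorem, the discrepancy verification via Proposition \ref{relmax} and Noether's Reduction Lemma with the same test functions $z^{M-j}y^{q-i}/(x-\alpha_k)$, $z^{j-1}y^i$, $\prod_{\ell\neq k}(x-\alpha_\ell)$ and $z^{M-1}y^q/\prod_{\ell}(x-\alpha_\ell)$, and the cardinality comparison with $\hGamma(\negP_{m+1})\cap\cC_{m+1}$ for the reverse inclusion. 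No substantive deviation or gap beyond what is already present in the paper's own argument.
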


\bigskip

\begin{corollary} \label{corolario lambda chapeu Y}
$$
\begin{array}{ll}
\hLambda(\negP_{m+1})= & \left\{  \left((q^2 - 1-\sum_{\ell=2}^{m+1} k_\ell)(q+1)M -iqM -jq^3, \right. \right. \\
& k_2(q^n+1)M + iM+j,\ldots,k_{m+1}(q^n+1)M + iM+j ) \in \ZZ^{m+1} \mbox{ ; } \\
& (i,j)\in ([0,q]\times[1,M]\cap \ZZ^2) \backslash (q,M) \mbox{ , }k_\ell \in \ZZ  \mbox{ for } \ell=2,\ldots,m+1 \}\\
& \bigcup \left\{ (m-1 - \sum_{\ell=2}^{m+1} \widetilde{k}_\ell)(q+1)M,\widetilde{k}_2(q^n+1)M,\ldots,\widetilde{k}_{m+1}(q^n+1)M) \mbox{ ; } \widetilde{k}_\ell \in \ZZ \right\}.
\end{array}
$$
\end{corollary}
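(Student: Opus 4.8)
The plan is to establish this description exactly as in the $\mathcal{X}_{a,b,n,s}$ case of Corollary \ref{corolario lambda chapeu}, specialized to $p^b=1$. This specialization is legitimate because the relevant data of $\mathcal{Y}_{n,s}$ --- the divisors (\ref{div x Yns}), (\ref{div y Yns}), (\ref{div z Yns}), the semigroup $H(P_{\infty})=\langle qM,q^3,(q+1)M\rangle$, and the canonical divisor (\ref{K canonico Yns}) --- are precisely the $p^b=1$ versions of their $\mathcal{X}_{a,b,n,s}$ counterparts used throughout the previous section. The backbone of the argument is Theorem \ref{maximals}, which gives $\hLambda(\negP_{m+1})=(\hLambda(\negP_{m+1})\cap \cC_{m+1})+\Theta_{m+1}$; thus it suffices to describe the finite set $\hLambda(\negP_{m+1})\cap \cC_{m+1}$ and the lattice $\Theta_{m+1}$ and then form their Minkowski sum.

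For the finite piece I would simply quote the theorem immediately preceding this corollary, which states that $\hLambda(\negP_{m+1})\cap \cC_{m+1}=\{\negbeta^{i,j,m}\}\cup\{\negbeta^{0,0,m}\}$. For $\Theta_{m+1}$ I need the integers $a_i$ from Subsection \ref{GWS}: by (\ref{div x Yns}) the function $(x-\alpha_i)/(x-\alpha_{i-1})$ has divisor $(q+1)M(P_i-P_{i-1})$, so $a_i$ divides $(q+1)M$, and minimality of $(q+1)M$ follows from the structure of $H(P_{\infty})$ exactly as in the previous section. Consequently $\cC_{m+1}$ and the generators $\negeta^i$ are given by (\ref{Cm}) and (\ref{eta_i}) (which carry no dependence on $p^b$), and, reorganizing the free parameters as in the proof of Corollary \ref{corolario lambda chapeu}, $\Theta_{m+1}$ equals the set of all tuples $\bigl(-\sum_{\ell=2}^{m+1}k_\ell(q+1)M,\,k_2(q+1)M,\ldots,k_{m+1}(q+1)M\bigr)$ with $k_\ell\in\ZZ$.

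Finally I would add a generic element of $\Theta_{m+1}$ to each of $\negbeta^{i,j,m}$ and $\negbeta^{0,0,m}$ and simplify. In the first coordinate this turns $(q^2-1)(q+1)M-iqM-jq^3$ into $(q^2-1-\sum_{\ell=2}^{m+1}k_\ell)(q+1)M-iqM-jq^3$ and turns $(m-1)(q+1)M$ into $(m-1-\sum_{\ell=2}^{m+1}k_\ell)(q+1)M$, while the remaining coordinates become $k_\ell(q+1)M+iM+j$ and $k_\ell(q+1)M$ respectively; these two families are exactly the displayed sets. I expect no genuinely hard step in this specialization: the only points requiring care are the verification that $a_i=(q+1)M$ and not a proper divisor of it, and the bookkeeping of the summation index in the first coordinate. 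As a consistency remark, the $p^b=1$ analogue of Lemma \ref{delta lambda} shows that the two families are disjoint and that the parametrizations are non-redundant.
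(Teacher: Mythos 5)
Your proposal is correct and follows essentially the same route as the paper: Section 4 omits the proof precisely because it is the $p^b=1$ specialization of the argument for $\mathcal{X}_{a,b,n,s}$, namely Theorem \ref{maximals} applied to the finite set $\hLambda(\negP_{m+1})\cap\cC_{m+1}$ given by the preceding theorem, together with the computation of $\Theta_{m+1}$ from $a_i=(q+1)M$, exactly as you do. One remark: your computation yields the factor $(q+1)M$ in coordinates $2,\ldots,m+1$, whereas the printed statement has $(q^n+1)M$ there; since $a_i$ divides $(q+1)M$ by (\ref{div x Yns}) and the same lattice $\Theta_{m+1}$ appears with factor $(q+1)M$ in Corollary \ref{corolario gamma Y}, the printed $(q^n+1)M$ is a typo and your version is the correct one.
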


\bigskip

\begin{corollary} \label{corolario lambda Y}
$$
\begin{array}{rl}
\Lambda(\negP_{m+1})= & \left\{  \left((q^2 - 1-\sum_{\ell=2}^{m+1} k_\ell)(q+1)M -iqM -jq^3, \right. \right. \\
& k_2(q^n+1)M + iM+j,\ldots,k_{m+1}(q^n+1)M + iM+j ) \in \ZZ^{m+1} \mbox{ ; } \\
& (i,j)\in ([0,q]\times[1,M]\cap \ZZ^2) \backslash (q,M) \mbox{ , }k_\ell \in \ZZ \mbox{ and } k_\ell(q^n+1)M + iM+j \geq 0\\
& \mbox{ for } \ell=2,\ldots,m+1 \mbox{ ,and } (q^2 - 1-\sum_{\ell=2}^{m+1} k_\ell)(q+1)M -iqM -jq^3 \geq 0 \}\\
\bigcup & \{ (m-1 - \sum_{\ell=2}^{m+1} \widetilde{k}_\ell)(q+1)M,\widetilde{k}_2(q^n+1)M,\ldots,\widetilde{k}_{m+1}(q^n+1)M) \mbox{ ; } \widetilde{k}_\ell \in \mathbb{N}_0 \mbox{ and } \\
& \sum_{\ell=2}^{m+1} \widetilde{k}_\ell \leq m-1\}.
\end{array}
$$
\end{corollary}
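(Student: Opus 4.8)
The plan is to derive this corollary directly from the explicit description of $\hLambda(\negP_{m+1})$ obtained in Corollary \ref{corolario lambda chapeu Y}, by intersecting with the nonnegative orthant. Recall the definition $\Lambda(\negP_{m+1})=\hLambda(\negP_{m+1})\cap \NN_0^{m+1}$. Hence the whole argument reduces to deciding, for each of the two families that constitute $\hLambda(\negP_{m+1})$, exactly when all $m+1$ coordinates of the corresponding tuple are nonnegative. This is the $\mathcal{Y}_{n,s}$-analogue of the proof of Corollary \ref{corolario lambda}, obtained by specializing $p^b=1$.

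First I would analyse the family of $\negdelta$-type tuples. For these the last $m$ entries have the form $k_\ell(q^n+1)M+iM+j$. Since $(i,j)\in ([0,q]\times[1,M]\cap \ZZ^2)\backslash(q,M)$ forces $0<iM+j<(q+1)M\le (q^n+1)M$, each such entry is nonnegative precisely when $k_\ell\ge 0$, which is exactly the recorded condition $k_\ell(q^n+1)M+iM+j\ge 0$. The first entry then contributes the single remaining inequality $(q^2-1-\sum_{\ell=2}^{m+1}k_\ell)(q+1)M-iqM-jq^3\ge 0$. Imposing these $m+1$ inequalities on the $\negdelta$-family coming from Corollary \ref{corolario lambda chapeu Y} produces exactly the first set in the statement.

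Next I would treat the family of $\neglambda$-type tuples, whose last $m$ entries are $\widetilde{k}_\ell(q^n+1)M$ and are therefore nonnegative if and only if $\widetilde{k}_\ell\in\NN_0$. The first entry $(m-1-\sum_{\ell=2}^{m+1}\widetilde{k}_\ell)(q+1)M$ is nonnegative if and only if $\sum_{\ell=2}^{m+1}\widetilde{k}_\ell\le m-1$. These are precisely the constraints defining the second set in the statement, so taking the union of the two restricted families yields the claimed equality.

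I do not expect a genuine obstacle here: once Corollary \ref{corolario lambda chapeu Y} is in hand, the result is a coordinatewise nonnegativity check. The only point deserving a line of care is the reduction of the last-coordinate conditions for the $\negdelta$-family to $k_\ell\ge 0$, which relies on the bound $0<iM+j<(q+1)M$ guaranteed by $(i,j)\neq(q,M)$ together with $(q+1)M\le (q^n+1)M$. The same bound, combined with Lemma \ref{delta lambda}, also confirms that the two families remain disjoint after intersecting with $\NN_0^{m+1}$, so that the description is free of redundancy.
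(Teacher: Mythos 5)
Your proposal is correct and follows exactly the paper's (implicit) argument: the paper omits the proof, referring to the $p^b=1$ specialization of Corollary \ref{corolario lambda}, whose proof is precisely the intersection $\Lambda(\negP_{m+1})=\hLambda(\negP_{m+1})\cap \NN_0^{m+1}$ applied to the two families of Corollary \ref{corolario lambda chapeu Y} via a coordinatewise nonnegativity check. Your additional remarks (the reduction of the last-coordinate conditions to $k_\ell\ge 0$ and the disjointness of the two families via Lemma \ref{delta lambda}) match the supporting observations the paper makes in the corresponding Section 3 proofs.
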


\end{document}